\newtheorem{theorem}{Theorem}[section]
\newtheorem{lemma}[theorem]{Lemma}
\newtheorem{proposition}[theorem]{Proposition}
\theoremstyle{definition}
\newtheorem{definition}[theorem]{Definition}
\theoremstyle{remark}
\numberwithin{equation}{section}
\newcommand{\p}{\mathfrak{p}}
\newcommand{\fp}{\mathbb{F}_p}
\newcommand{\fpbar}{\overline{\mathbb{F}}_p}
\newcommand{\qp}{\mathbb{Q}_p}
\newcommand{\cp}{\mathbb{C}_p}
\newcommand{\ord}{\text{ord}}
\newcommand{\cm}{\mathcal{CM}}
\newcommand{\okw}{O_{K_{0,v_0}}}
\newcommand{\okv}{O_{K_v}}
\newcommand{\Z}{\mathcal{Z}}
\newcommand{\Hom}{\text{Hom}}
\newcommand{\D}{\mathbb{D}}
\newcommand{\ktildebarp}{\overline{\tilde{k}}_\p}
\begin{document}

\title{Special Correspondences of CM Abelian Varieties and Eisenstein Series II}

\author{Ali Cheraghi}
\address{The Fields Institute, 222 College Street, Toronto, ON M5T 3J1}
\email{acheragh@fields.utoronto.ca}




\setlength{\parindent}{0cm}

\begin{abstract}
In this paper, we prove the relation between special cycles on a Rapoport-Smithling-Zhang Shimura variety and special values of the derivative of a Hilbert Eisenstein series. 
\end{abstract}

\maketitle
\tableofcontents

\section{Introduction}

This is a sequel to the \cite{cheraghi}. In that paper, we considered one fixed CM field $K$ and considered pairs of CM principally polarized abelian varieties which have CM-types that are different in only one embedding. Considering their moduli space, we defined special divisors that depend on $\alpha$ where $\alpha$ is an element of $F$ (the maximal totally real field inside $K$) and computed their Arakelov degrees by calculating the number of stacky points of special fibers of the special divisors and also the length of strictly Henselian local rings (c.f. \cite{cheraghi} theorem 3.10 and 3.11) and putting these two together. On the other hand we found an Eisenstein series for which the $\alpha^{th}$ Fourier coefficient was related (up to some factors that do not depend on $\alpha$) to the Arakelov degree of $\alpha^{th}$ special divisor. The main theorem of \cite{cheraghi} was theorem 5.2 that showed this relation. 

In this part, we are interested in having different but included CM fields (i.e. CM fields $K_0$ and $K$ with $K_0 \subseteq K$) and then we are going to consider pairs of polarized abelian varieties (such that their dimensions are relatively $[K:K_0]$ and they have action by $O_{K_0}$ (ring of integers of $K_0$) such that the action of $O_{K_0}$ on their Lie algebras has a specific kind. Then using the same method as in our previous paper (which used a method originally from \cite{howard}), we define special divisors and prove that their Arakelov degrees are related to the Fourier coefficients of an Eisenstein series. The main motivation for these kinds of results for the author is the expected relation of the $0^{th}$ coefficient of this Eisenstein series to special value of the derivative of L-functions.

To state our main result, we need some notations (we will repeat these notations in the notations section below as well). Let $K_0 \subseteq K$ be CM-fields with $F_0 \subseteq F$ being their maximal totally real subfields. Let $\Phi_0$ and $\Phi$ be some nearby CM-types (for the precise definition, see the notations section) of $K_0$ and $K$, respectively. Let $\tilde{K}$ be the reflex field of $(K,\Phi)$. For a prime $\p$ of $\tilde{K}$, let $\overline{\tilde{k}_{\p}}$ be a choice of algebraic closure of residue field of $\tilde{K}$ at $\p$. Let $A_0$ be a principally polarized abelian variety with CM by $O_{K_0}$ with polarization $\lambda_{A_0}$ and let $A$ be a polarized abelian variety with polarization $\lambda_A$ that has an action by $O_{K_0}$. We can make $\Hom_{O_{K_0}}(A_0,A)$ a Hermitian space by letting $\langle f,g \rangle = \lambda_{A_0}^{-1} \circ g^{\vee } \circ \lambda_A \circ f$ where $g^{\vee}: A^{\vee} \rightarrow A_0^{\vee}$ is the dual of $g$. This Hermitian form is $O_{K_0}$-valued and we define $\langle , \rangle_{CM}$ to be the unique $K$-valued Hermitian form with the property that tr$\langle , \rangle_{CM} =  \langle , \rangle$. We define the special divisors $\mathcal{Z}(\alpha)$ and consider the following quantity (called Arakelov degree of this special divisor on specific moduli space):
$$\widehat{\deg} \Z(\alpha) = \frac{1}{[\tilde{K}:\mathbb{Q}]} \sum_{\p \subset O_{\tilde{K}}} \log\; N(\p) \sum_{z \in \Z(\alpha)(\overline{\tilde{k}_{\p}})} 
\frac{\text{length}(O^{\text{\'et}}_{\Z(\alpha),z})}{ \# \text{Aut}\; z} $$ 

where $O^{\text{\'et}}_{\Z(\alpha),z}$ is the strictly Henselian local ring at $z$ and $N(\p)$ is the norm of $\p$ in$\mathbb{Q}$ and $O_{\tilde{K}}$ is the ring of integers of $\tilde{K}$.

In the previous paper, we were able to compute this quantity for the special divisors on a related moduli space for all nonzero $\alpha$  and then considered an Eisenstein series which has $b_{\Phi}(\alpha,y)$ as its $\alpha^{th}$ Fourier coefficient where $\tau = x+iy$ is an element of $\mathbb{H}^{[F:\mathbb{Q}]}$ (where $\mathbb{H}$ is the upper half-plane) and the main result of \cite{cheraghi} was the following:

\begin{theorem}
Let $\alpha$ be a nonzero element of $F$. Suppose that the following ramification conditions are satisfied:\\
1) $K/F$ is ramified at at least one finite prime.

2) For every rational prime $l \leq \frac{[\tilde{K}:\mathbb{Q}]}{[K:\mathbb{Q}]}+1$, the ramification index of $l$ in $\tilde{K}$ is less than $l$,
then we have
$$\widehat{\deg}{\mathcal{Z}}(\alpha) = \frac{-|C_K|}{w(K)} \frac{\sqrt{N_{F/\mathbb{Q}}(d_{K/F})}}{2^{r-1} [K:\mathbb{Q}]} b_{\Phi}(\alpha,y)$$
where $|C(K)| = |\widehat{O}_F^{\times \gg 0}/N_{K/F}\widehat{O}_K^{\times}| h(K)$ where $h(K)$ is the class number of $K$, $w(K)$ is the number of roots of unity in $K$, $d_{K/F}$ is the relative discriminant of $K/F$, $r$ is the number of places (including archimedean) ramified in $K$, and $b_{\Phi}(\alpha,y)$ is the  $\alpha^{\text{th}}$ coefficient of the Fourier expansion of the derivative of a Hilbert Eisenstein series at $s=0$.
\end{theorem}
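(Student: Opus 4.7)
The plan is to establish the equality by decomposing both sides into local contributions indexed by primes and matching them place-by-place. The geometric side $\widehat{\deg}\,\Z(\alpha)$ is already presented as a sum over primes $\p$ of $O_{\tilde{K}}$, while the Fourier coefficient $b_{\Phi}(\alpha,y)$ arises as the derivative at $s=0$ of a product of local Whittaker integrals, which by ramification hypothesis (1) vanishes at $s=0$ and whose derivative therefore splits as a sum over places of $F$. The overall approach follows the template of \cite{howard}: compute both sides separately, reduce each to a sum of local terms indexed by the non-split finite primes, and verify the term-by-term identity.

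First I would analyze the geometric side. For a prime $\p$ of $\tilde{K}$ above a rational prime $p$, the set $\Z(\alpha)(\ktildebarp)$ is nonempty only when the underlying prime $\p_F$ of $F$ is non-split in $K$; this follows from the Hermitian-space interpretation of the special divisor, since $\Hom_{O_{K_0}}(A_0,A)$ can contain an element of norm $\alpha$ only at such primes. At each such point $z$, I would use Serre--Tate together with Grothendieck--Messing theory to identify $O^{\text{\'et}}_{\Z(\alpha),z}$ with the deformation ring of a $p$-divisible group carrying the prescribed $O_{K_0}$-action and a lift of a fixed quasi-homomorphism of Hermitian norm $\alpha$; the length of this ring is then computed by a Gross--Keating style formula for quasi-canonical liftings. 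Summing (length)$\times$(number of points)/(automorphism number) over $\p \mid \p_F$ for a given $\p_F$ gives the geometric local contribution at $\p_F$.

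Next I would expand the Eisenstein series. Realizing it as an induction from the Siegel parabolic of the quasi-split unitary group attached to $K/F$, its $\alpha^{\text{th}}$ Fourier coefficient factorizes as $\prod_v W_{\alpha,v}(s)$ over places $v$ of $F$. Hypothesis (1) guarantees that at least one local factor vanishes at $s=0$, so $b_{\Phi}(\alpha,y) = \sum_v W'_{\alpha,v}(0)\prod_{w\neq v}W_{\alpha,w}(0)$. The archimedean factors produce the $y$-dependent Gaussian, while the finite factors $W'_{\alpha,v}(0)$ at non-split primes are derivatives of local representation densities attached to the Hermitian lattice $\Hom_{O_{K_0}}(A_0,A)$.

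The main obstacle is the prime-by-prime matching: for each non-split finite prime $\p_F$ of $F$, one must show that the geometric local contribution equals $\log N(\p_F)\cdot W'_{\alpha,\p_F}(0)\prod_{w\neq \p_F}W_{\alpha,w}(0)$ up to the explicit global constant in the theorem. The key input is a local identity of Gross--Keating type equating the deformation-ring length with the derivative of the local density, and this is precisely where ramification hypothesis (2) enters: the bound on the ramification indices of small rational primes in $\tilde{K}$ prevents wild ramification from introducing correction terms into either the Gross--Keating calculation or the normalization of the local Whittaker integral. Once the local matching is in hand, unwinding the global constants $|C_K|/w(K)$ (encoding the isogeny classes of CM structures), $\sqrt{N_{F/\mathbb{Q}}(d_{K/F})}$ and $2^{r-1}$ (from the functional equation normalization of the Eisenstein series), and $[K:\mathbb{Q}]$ (from the Fourier normalization) yields the stated closed form.
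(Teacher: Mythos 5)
Your overall architecture is the right one, and it is the approach the paper (and its prequel \cite{cheraghi}, following \cite{howard}) actually uses: present $\widehat{\deg}\,\mathcal{Z}(\alpha)$ as a sum over primes $\p$ of $O_{\tilde K}$ supported on those with $\p_F$ nonsplit, compute the point count via orbital integrals and the lengths via Serre--Tate plus Grothendieck--Messing on $p$-divisible groups, factor $E_\alpha(\tau,s,c,\psi_F)$ as a product of local Whittaker functions $\prod_v W^\ast_{\alpha,v}$, observe that incoherence forces vanishing at $s=0$ so the derivative is a sum over $\mathrm{Diff}(\alpha,c)=\{v\}$, and match the two sums term by term. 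That said, there are two places where your account diverges from what is actually happening and would cause trouble if carried out literally.

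First, the Eisenstein series in question is \emph{not} induced from the Siegel parabolic of a quasi-split unitary group; it is a Hilbert Eisenstein series on $SL_2(\mathbb{A}_F)$ built from a section of $I(\chi,s)$ via the Weil representation $\omega_{c,\psi}$ acting on Schwartz functions on $K_v$, and the local Whittaker integrals are evaluated directly from Yang's explicit formulas for CM fields \cite{yang}, not via derivatives of Kudla--Rapoport-style local representation densities of a Hermitian lattice. In rank one these frameworks are related but not interchangeable without extra work; in particular the normalization by $N_{F/\mathbb{Q}}(\partial_{F/F_0})^{(s+1)/2} L(s+1,\chi)$ and the constants $\sqrt{N_{F/\mathbb{Q}}(d_{K/F})}$ and $2^{r-1}$ drop straight out of Yang's formulas, not from a density functional equation. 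Second, you have misattributed the role of the ramification hypothesis (2). It has nothing to do with the Whittaker integrals (they live over $F$, and cannot see ramification in $\tilde K$); rather, it is needed on the geometric side to guarantee that the ideals $m^k \subset \tilde W^{(k+\ell)}$ carry the divided-power structures required to iterate Grothendieck--Messing and thereby climb the tower $\tilde W^{(1)} \subset \tilde W^{(2)} \subset \cdots$ when computing how far a quasi-homomorphism in $L(A_0,A)$ deforms. Without recognizing this you would have no justification for the crucial length formula $\mathrm{length}(O^{s.h.}_{\mathcal{Z}(\alpha),z}) = \tfrac{1}{2}\,\mathrm{ord}_{\tilde K_\p}(\alpha\,\p_F\,\mathfrak{a}^{-1}\,\partial_{F/F_0})$, which is the analogue of your ``Gross--Keating style'' step and is in fact an elementary calculation with crystals once the DP structures exist.
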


In this paper, we prove the same result for $\mathcal{Z}(\alpha)$ and moduli space in the setting we wrote about above. Specifically, we find an Eisenstein series with Fourier coefficients denoted by $b_{\Phi}(\alpha,y)$ and prove the following {\bf main theorem}:

\begin{theorem}
Let $\alpha \in F^{\times}$. Suppose that the following conditions are satisfied:\\
(1) $K/F$ is ramified at at least one finite prime.\\
(2) Relative discriminants of $K_0/F_0$ and $F/F_0$ are relatively prime.\\
(3) The assumption below proposition 4.1 below is satisfied, then we have:
$$\widehat{\deg}\mathcal{Z}(\alpha) = \frac{-1}{w(K_0)}\frac{N_{F/\mathbb{Q}}(d_{K/F})^{\frac{1}{2}}}{2^{r-1}[K:\mathbb{Q}]} b_{\Phi}(\alpha,y).$$
\end{theorem}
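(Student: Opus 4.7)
The plan is to mirror the strategy of Theorem 1.1 of \cite{cheraghi}, now adapted to the relative setting $K_0 \subseteq K$. First I would unwind the definition of $\widehat{\deg}\Z(\alpha)$ by decomposing the sum over primes $\p$ of $O_{\tilde{K}}$ according to their splitting behavior in $K$ and $K_0$. For each such $\p$ the geometric points $z \in \Z(\alpha)(\overline{\tilde{k}_{\p}})$ are parametrized by triples $(A_0, A, f)$ with $f \in \Hom_{O_{K_0}}(A_0,A)$ satisfying $\langle f, f\rangle_{CM} = \alpha$, and the global Hermitian structure on $\Hom_{O_{K_0}}(A_0,A) \otimes K$ reduces the counting to an adelic class-number type formula, with $\#\text{Aut}\;z$ absorbing the automorphisms of $A_0$ and contributing the factor $1/w(K_0)$ that appears in the main theorem.

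Next I would compute the local length $\text{length}(O^{\text{\'et}}_{\Z(\alpha),z})$ via Serre-Tate and Grothendieck-Messing deformation theory applied to the $p$-divisible groups of $A_0$ and $A$ equipped with their $O_{K_0}$-actions and polarizations. Condition (2), that the relative discriminants $d_{K_0/F_0}$ and $d_{F/F_0}$ are coprime, is precisely what allows the local deformation problem at each prime to decouple into contributions from the two subtowers, so that the techniques of \cite{howard} as adapted in \cite{cheraghi} can be transported almost verbatim; condition (3) controls ramification well enough that the relevant Rapoport-Smithling-Zhang local models remain tractable. The outcome should be a closed-form length in terms of $\ord_\p(\alpha)$ and local invariants of $K/K_0$ at $\p$.

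The third step is to match the resulting global sum with $b_{\Phi}(\alpha,y)$. Using the local-global decomposition of the Hilbert Eisenstein series, each non-archimedean Fourier coefficient is the derivative at $s=0$ of a local Whittaker integral, which in turn equals the derivative of the local representation density for the binary Hermitian form representing $\alpha$; a direct computation identifies this density with the local length from Step 2, up to the factor $\log N(\p)$ and an explicit local volume. Assembling the local matches globally yields the prefactor $N_{F/\mathbb{Q}}(d_{K/F})^{1/2}/(2^{r-1}[K:\mathbb{Q}])$, while condition (1), that $K/F$ be ramified at a finite prime, forces the class-number-type cancellation that replaces $|C(K)|$ from Theorem 1.1 by a simple constant.

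The main obstacle I anticipate is the local length computation at primes where $K/K_0$ is ramified and $A$ has supersingular reduction: there the $O_{K_0}$-action on $\text{Lie}(A)$ and the signature condition dictated by the nearby CM-type $\Phi$ interact nontrivially with the $F/F_0$ part of the tower, and the associated deformation rings are no longer regular. A careful case analysis of the local model, in the spirit of Rapoport-Smithling-Zhang, is required to extract the clean closed form needed for Step 3. Once this local input is in place, the remaining identifications are essentially bookkeeping, so the genuine difficulty of the theorem is concentrated in this deformation-theoretic step.
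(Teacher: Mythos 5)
Your three-step plan mirrors the paper's architecture exactly: express $\widehat{\deg}\mathcal{Z}(\alpha)$ as a sum of stacky point counts weighted by local lengths (Theorems 4.12--4.14), compute the lengths by Serre--Tate and Grothendieck--Messing lifting (Theorem 3.5), and match the result against the normalized local Whittaker functions in Yang's formulas (Section 6), after which the proof of the stated theorem is pure bookkeeping. One caveat on role attribution, though, which would send you down the wrong path in Step 2: condition (2) does \emph{not} ``decouple the local deformation problem into contributions from the two subtowers.'' The local computation always lives on the single Hermitian module $\Hom_{O_{K_0}}(A_0,A)$, and what drives the length formula is the idempotent decomposition of $O_{K_v}\otimes_{\mathbb{Z}_p}W$ together with the offset, supported at the unique embedding $\phi_1^1$, between the $p$-adic CM type $\Phi_v$ of $A$ and the one induced from $\Phi_{0,v_0}$; condition (2) instead guarantees existence of the moduli objects and yields the bookkeeping identity $\mathcal{D}_0\mathcal{D}^{-1}=\partial_{F/F_0}^{-1}O_K$ (Proposition 4.4). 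Similarly, the replacement of $|C(K)|$ by a simple constant compared with the prequel is not a cancellation forced by condition (1) but a structural consequence of fixing $\xi$ and imposing the sign conditions, which leave exactly one $C_K^0$-orbit in each $\mathcal{X}(\overline{\tilde{k}}_\p)$; condition (1) enters through the incoherence $\chi(c)=-1$ that makes $E(\tau,0,c,\psi_F)$ vanish and puts the content in the derivative.
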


The way to prove it is to consider CM p-divisible groups in the third section and prove the amount of lifting of homomorphisms between CM p-divisible groups, and then in the fourth section we are going to consider the global case and define the moduli space and the special divisors as DM-stacks. Finally in the fifth and last chapter, we will define the Eisenstein series and prove the relation between the Fourier coefficients and the Arakelov degree which will resolve the main theorem.

\section{Notations}

Let $p$ be a prime number. Let $K_0 \subseteq K$ be CM fields with $F_0 \subseteq F$ their maximal totally real subfields. Let $K_0 = F_0(\sqrt{\Delta})$ for a totally negative element $\Delta \in F_0$. For a local or global field $L$, let $O_L$ be the ring of integers or valuation ring of $L$. For a global field $L$, let  $O_{L,(v)}$ be the localisation of ring of integers of $L$ at $v$ and for $v$ a prime of a number field $L$, $L_v$ is the completion of $L$ at $v$ and $O_{L,v}$ the valuation ring of $L_v$. Let $\fpbar$ be an algebraic closure of $\fp$, the field of $p$ elements. $\bar{L}_v$ is a choice of algebraic closure of $L_v$.  For a local field $L$, $\p_L$ denotes the maximal ideal of $O_L$. For a prime $\mathfrak{q}$ of $F$, let $\epsilon_{\mathfrak{q}}$ be $0$ if $\mathfrak{q}$ is ramified in $K$ and $1$ if $\mathfrak{q}$ is unramified in $K$. For a prime $v$ of $K$, $v_0$ a prime of $K_0$ with $v|v_0|p$. Let $v_F$, $v_{0,F_0}$ be primes in $F$, $F_0$ respectively below $v$, $v_0$. Let $n=[K:K_0]$ and $2d = [K_0:\mathbb{Q}]$. Let $\chi: \mathbb{A}_F^{\times} \rightarrow \lbrace \pm 1 \rbrace$ be the quadratic character associated to $K/F$. By a CM-type $\Phi_L$ of a CM-field $L$ with complex conjugation $\bar{(.)}$, we mean a subset of $\Hom(L,\mathbb{C})$ such that $\Phi_L \coprod \overline{\Phi_L} = \Hom(L,\mathbb{C})$, where $\overline{\Phi_L} = \lbrace \bar{\phi}| \phi \in \Phi_L \rbrace$ ($\bar{\phi}(x) = \phi(\bar{x})$ for $x \in L$). For a finite extension $L/\qp$ and subfield $L_0 \subseteq L$ of index 2 and $\text{Gal}(L/L_0) = \langle\bar{(.)}\rangle$, a $p$-adic CM-type $\Phi_L$ is a subset of $\Hom_{\qp}(L,\cp)$ with $\Phi_L \coprod \overline{\Phi_L} = \Hom_{\qp}(L,\cp)$ where  $\overline{\Phi_L} = \lbrace \bar{\phi}| \phi \in \Phi_L \rbrace$ ($\bar{\phi}(x) = \phi(\bar{x})$ for $x \in L$). Let $\Phi$ (resp. $\Phi_0$) be a CM-type of $K$ (resp. $K_0$) with 
$$\Phi = \lbrace \phi_1^1, \phi_1^2, \cdots, \phi_1^n, \phi_2^1, \cdots, \phi_2^n, \cdots, \phi_d^1, \cdots, \phi_d^n \rbrace$$
$$\Phi_0 = \lbrace \phi_1, \phi_2, \cdots, \phi_d \rbrace$$
with $\phi_i^j|_{K_0} = \phi_i$ if $(i,j) \neq (1,1)$ and $\phi_1^1|_{K_0} = \overline{\phi_1}$. Also let $\tilde{\Phi}_0$ be the CM-type of $K$ induced by $\Phi_0$.
Fix $\iota: \mathbb{C} \cong \cp$.
Let $\Phi_v$ (resp. $\Phi_{v_0}$) be $p$-adic CM-type of $K_v$ relative to $F_{v_F}$ (resp. $K_{0,v_0}$ relative to $F_{0,v_{F_0}}$) consisting of all $\phi \in \Phi$ (resp. $\phi_0 \in \Phi_0$) with the property that $(\iota \circ \phi)^{-1}(\p_{\cp}) = v$ (resp. $(\iota \circ \phi_0)^{-1}(\p_{\cp}) = v_0$). Let $\tilde{K} \subseteq \mathbb{C}$ (resp.  $\tilde{K}_p \subseteq \cp$ with the abuse of notation) be a large enough Galois extension of $\mathbb{Q}$ (resp. $\qp$) such that for $\sigma \in \text{Aut}(\mathbb{C}/\tilde{K})$ (resp. $\text{Aut}(\cp/\tilde{K}_p))$, we have $\Phi^\sigma = \Phi$ (resp. $\Phi_v^\sigma = \Phi_v$) and $\Phi_0^\sigma = \Phi_0$ (resp. $\Phi_{0,v_0}^\sigma = \Phi_{0,v_0}$) where $\Phi^\sigma = \lbrace \sigma \circ \phi | \phi \in \Phi \rbrace$ and similarly for $\Phi_0$ (For example, we can take $\tilde{K}$ (resp. $\tilde{K}_p$) to be the Galois closure of $K$ over $\mathbb{Q}$ (resp. $K_v$ over $\qp$)). Let $\tilde{k}_{\p}$ be the residue field of $\tilde{K}$ at $\p$. Let $\ktildebarp$ be an algebraic closure of $\tilde{k}_{\p}$. $\tilde{W}$ be the valuation ring of the maximal unramified extension of $\tilde{K}_p$ and $m$ be its maximal ideal. ART be the category of local Artinian $\tilde{W}$-algebras with residue field $\fpbar$. For a $p$-divisible group $A$ defined over $R \in \text{obj(ART)}$ with an action $\kappa: O_K \rightarrow \text{End}(A)$, we say it has $\Phi$-determinant condition if determinant of the action of $\sum_{i=1}^r t_ix_i$ ($x_i$'s $\in O_K$ and $t_i$'s variables) on $\text{Lie}\; A$ is given by the image of $\prod_{\phi \in \Phi}(\sum_{i=1}^r t_i \phi(x_i))$ in $R[t_1, \cdots, t_r]$. For $R \in \text{obj(ART)}$, we let $m_R$ be the maximal ideal of $R$, then we denote $R^{(n)} = R/m_R^n$. Let $J_{\Phi_v}$ (resp. $J_{0,\Phi_{0,v_0}}$) be the kernel of the $\tilde{W}$-algebra map 
$$O_{K_v} \otimes_{\mathbb{Z}_p} \tilde{W} \rightarrow \prod_{\phi \in \Phi_v} \cp$$ (resp. $O_{K_0,v_0} \otimes_{\mathbb{Z}_p} \tilde{W} \rightarrow \prod_{\phi \in \Phi_{v_0}} \cp$) given by $x \otimes 1 \mapsto (\phi(x))_{x \in \Phi_v}$ (resp. $x \otimes 1 \mapsto (\phi(x))_{x \in \Phi_{v_0}}$). $\mathcal{D}_v$ and $\mathcal{D}_{v_0}$ be differents of $K_v/\qp$ and $K_{0,v_0}/\qp$, respectively. Let $W$ be ring of integers of maximal unramified extension of $K_v$ if $v$ is known in the context.

$\mathcal{D}_0$, $\mathcal{D}$ be the differents of $K_0/\mathbb{Q}$ and $K/\mathbb{Q}$ respectively. For two number fields $L_1 \subseteq L_2$, $\partial_{L_2/L_1}$ be the relative different of $L_2$ over $L_1$. We assume that $K_0/F_0$ is ramified at at least one finite prime and the relative discriminants of $K_0/F_0$ and $F/F_0$ are relatively prime (this is to ensure the existence of CM abelian varieties with $O_K$-action and $O_{K_0}$-action). Also for two abelian varieties $A_0$ and $A$ with CM by $O_{K_0}$, let $L(A_0,A)$ be $\Hom_{O_{K_0}}(A_0,A)$ ($O_{K_0}$-linear mappings from $A_0$ to $A$).

\section{Local part}

\subsection{Lifting of homomorphisms}

 We assume $O_{K_0} \otimes_{\mathbb{Z}} O_F = O_K$, also we assume the following ramification condition:
 
If $p \leq \frac{[\tilde{K}_p:\qp]}{[K_v:\qp]} + 1$, then ramification index of $\tilde{K}_p/\qp$ is less than $p$. 
 
  Let $v$ (resp. $v_0$) be a prime of $K$ (resp. $K_0$) over $p$ such that $v | v_0$ and $A$ (resp. $A_0$) be a $p$-divisible group over $\fpbar$ with an action by $O_{K_v}$ (resp. $O_{K_{0, v_0}}$) given by $\kappa: O_{K_v} \rightarrow \text{End}\; A$ (resp. $\kappa_0: O_{K_{0,v_0}} \rightarrow \text{End}\; A_0$)
having $\Phi_v$-determinant (resp. $\Phi_{0,v_0}$-determinant) condition. Also we assume that they have an $O_{K_v}$-linear (resp. $O_{K_{0,v_0}}$-linear) polarization $\lambda: A \rightarrow A^{\vee}$ with kernel $A[\mathfrak{a}]$ where $\mathfrak{a}$ is an ideal of $O_{K_v}$(resp. principal polarization $\lambda_{0}: A_0 \rightarrow A_0^{\vee}$). Now we  consider these two cases: 

$1.$ All elements of $\Phi_v$ restricted to $K_{0,v_0}$ become elements of $\Phi_{0,v_0}$.

$2.$ There's exactly one element of $\Phi_v$ such that when restricted to $K_{0,v_0}$ becomes conjugate of an element of $\Phi_{0,v_0}$ and $K_v \neq F_{v_F}$. 

First we assume we have case 1:

\begin{proposition}
Let $T \in \text{obj}(ART)$ and $(A_0^{\prime},\kappa_0^{\prime}, \lambda_0^{\prime})$, $(A^{\prime},\kappa^{\prime},\lambda^{\prime})$ be the unique deformations of $(A_0,\kappa_0,\lambda_0)$ and $(A,\kappa,\lambda)$ to $T$ (which exist by theorem 2.1.3 of \cite{howard}). The reduction map
$$\text{Hom}_{O_{K_{0,v_0}}}(A_0^{\prime},A^{\prime}) \rightarrow \text{Hom}_{O_{K_{0,v_0}}}(A_0,A)$$ 
is a bijection.
\end{proposition}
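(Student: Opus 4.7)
My plan is to apply Grothendieck--Messing crystalline deformation theory to translate the question into one about compatibility of Hodge filtrations on Dieudonn\'e crystals, and then to read off the required compatibility from the CM-type condition of Case 1. Injectivity of the reduction map is the standard rigidity fact for $p$-divisible groups over an Artinian local base with residue field $\fpbar$: the difference of two lifts reduces to zero, and by induction on the length of $T$ (using that $\Hom$ between $p$-divisible groups is $p$-torsion free and that $m_T$ is nilpotent) any such lift of zero is itself zero. So the entire content is surjectivity.

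For surjectivity, take $f_0 \in \Hom_{O_{K_{0,v_0}}}(A_0, A)$. By Grothendieck--Messing, specifying $A'$ (resp.\ $A_0'$) amounts to specifying a lift $\text{Fil}^1(A') \subset \D(A)(T)$ (resp.\ $\text{Fil}^1(A_0') \subset \D(A_0)(T)$) of the Hodge filtration inside the Dieudonn\'e crystal evaluated at $T$, and $f_0$ lifts to an $O_{K_{0,v_0}}$-linear morphism $A_0' \to A'$ iff the induced crystalline map $\D(f_0)_T$ sends $\text{Fil}^1(A_0')$ into $\text{Fil}^1(A')$. I would decompose both crystals after base change to $\tilde{W}$ into isotypic components for the $O_{K_{0,v_0}}$-action, indexed by embeddings $\phi_0 : K_{0,v_0} \to \cp$. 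The $\Phi_{0,v_0}$-determinant (resp.\ $\Phi_v$-determinant) condition then pins down that the $\phi_0$-isotypic piece of $\text{Lie}(A_0')$ is the whole piece exactly when $\phi_0 \in \Phi_{0,v_0}$, and that the $\phi_0$-isotypic piece of $\text{Lie}(A')$ (appearing with multiplicity $n = [K:K_0]$) is the whole piece exactly when $\phi_0$ is the restriction of some $\phi \in \Phi_v$. The hypothesis of Case 1 is precisely that every such restriction lies in $\Phi_{0,v_0}$, so componentwise any $O_{K_{0,v_0}}$-equivariant crystalline map automatically carries $\text{Fil}^1(A_0')$ into $\text{Fil}^1(A')$, and we obtain the desired lift.

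The main technical obstacle I anticipate is upgrading the isotypic decomposition of the Dieudonn\'e crystals over $T \otimes_{\mathbb{Z}_p} \tilde{W}$ from a mere filtration with the correct graded pieces to an honest direct-sum splitting with summands of the expected ranks. This is exactly what the assumption $O_{K_0} \otimes_{\mathbb{Z}} O_F = O_K$ and the ramification condition stated just above the proposition are designed to handle: together they guarantee that the idempotents cutting out each $\phi_0$-isotypic component already live in $O_{K_{0,v_0}} \otimes_{\mathbb{Z}_p} \tilde{W}$, so that $O_{K_{0,v_0}}$-equivariance of $\D(f_0)_T$ translates directly into preservation of the Hodge filtration on each summand. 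Once this clean decomposition is in place, the remainder of the argument is formal from Grothendieck--Messing.
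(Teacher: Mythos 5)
Your overall instinct is right: by Grothendieck--Messing everything comes down to showing that $\D(f_0)_T$ carries the deformed Hodge filtration of $A_0$ into that of $A$, and the Case~1 hypothesis is what makes this automatic. Your injectivity argument is also fine. But the mechanism you propose for surjectivity does not go through, and the patch you offer for it rests on a misreading of the standing hypotheses.

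The gap is in the isotypic decomposition. You want to split $\D(A_0)(T)\otimes\tilde{W}$ into rank-one pieces indexed by all embeddings $\phi_0 : K_{0,v_0}\to\cp$, with the determinant condition declaring each piece to be entirely inside or entirely outside $\text{Lie}$. That is only correct when $K_{0,v_0}/\qp$ is unramified. In the ramified case the idempotents of $O_{K_{0,v_0}}\otimes_{\mathbb{Z}_p}\tilde{W}$ are indexed by embeddings of the maximal \emph{unramified} subfield $O_{K_{0,v_0}}^u$ only; several $\phi_0$ share a single idempotent component, that component is a free $(O_{\check{K}_{0,v_0}}\otimes_W T)$-module of rank equal to the ramification degree rather than $1$, and the Hodge filtration inside it is a genuine submodule, not a union of summands. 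Your anticipated ``technical obstacle'' is therefore real, and the escape route you propose --- that $O_{K_0}\otimes_{\mathbb{Z}}O_F = O_K$ together with the ramification condition on $\tilde{K}_p/\qp$ supply integral idempotents for each $\phi_0$ --- is not what those hypotheses do. The first is used to identify $A_0\otimes_{O_{F_0}}O_F$ with an $O_K$-abelian scheme and to compare $J_{\Phi_{0,v_0}}$ with $J_{\Phi_v}$; the second ensures a divided-power structure on $\ker(\tilde{W}^{(k+1)}\to\tilde{W}^{(k)})$ (needed later, in Proposition 3.3). Neither makes $O_{K_{0,v_0}}\otimes_{\mathbb{Z}_p}\tilde{W}$ split into a product of copies of $\tilde{W}$.

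The paper sidesteps all of this. Instead of a rank count on isotypic pieces, it checks directly that the ideal $J_{0,\Phi_{0,v_0}}$ of $O_{K_{0,v_0}}\otimes_{\mathbb{Z}_p}\tilde{W}$, after extending scalars to $O_{K_v}\otimes_{\mathbb{Z}_p}\tilde{W}$, lands inside $J_{\Phi_v}$: an element killed by every $\phi_0\otimes 1$ with $\phi_0\in\Phi_{0,v_0}$ is killed by every $\phi\otimes 1$ with $\phi\in\Phi_v$, because in Case~1 each $\phi|_{K_{0,v_0}}\in\Phi_{0,v_0}$. Since the Hodge filtrations of the unique deformations are $J_{0,\Phi_{0,v_0}}\D_{A_0^R}(S)$ and $J_{\Phi_v}\D_{A^R}(S)$, this single containment forces $O_{K_{0,v_0}}$-linearity of $\D(f)$ to preserve the filtration, ramified or not, and the deformation lifts. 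You should replace your isotypic argument with this ideal containment; the rest of your outline (square-zero extensions, induction on length of $T$) then matches the paper's.
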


\begin{proof}
If $g: S \twoheadrightarrow R$ is a surjection 
in $\text{ART}$ with kernel ${\ker g}$ 
having property $(\ker g)^2 = 0$. Denote by $
(A^{R},\kappa^R,\lambda^R)$ the deformation of 
$(A,\kappa,\lambda)$ to $R$ and similarly for $
(A_0,\kappa_0,\lambda_0)$. Now assume that we 
have $f \in \text{Hom}_{O_{K_{0,v_0}}}
(A_0^R,A^R)$ and let $\D_{A_0^R}$ and $\D_{A^R}
$ be the Grothendieck-Messing crystals of 
$A_0^R$, $A^R$ respectively. $f$ induces a map  $f: \D_{A_0^R}(S) \rightarrow \D_{A^R}(S)$. Now as we are in case 1, we have 
$$J_{0,\Phi_{0,v_0}}(\okv \otimes_{\mathbb{Z}_p} \tilde{W}) \subseteq J_{\Phi_v}$$
and so 
$$f(J_{0,v_0}\D_{A_0^R}(S)) = J_{0,v_0}(\D_{A_0^R}(S)) \subseteq J_v\D_{A_0^R}(S).$$
By the proof of theorem 2.1.3 in \cite{Howard2012}, Hodge filtrations of the deformations to $S$ correspond to
$$J_{0,\Phi_{0,v_0}}\D_{A_0^R}(S) \subseteq \D_{A_0^R}(S)$$
and $$J_{\Phi_v}\D_{A^R}(S) \subseteq \D_{A^R}(S)$$
and as $f$ preserves this filtration by above, $f$ can be uniquely lifted to a map in $\Hom_{\okw}(A_0^S,A^S)$ where $A_0^S$ and $A^S$ are unique lifts of $A_0^R$ and $A^R$ to $S$, respectively. Now using induction on $n$ and using $\cdots \subseteq R/m_R^n \subseteq \cdots \subseteq R/m_R = \fpbar$, we get the proposition.

\end{proof}

Now we consider case 2. Consider the $\okv$-module $L(A_0,A) = \Hom_{\okw}(A_0,A)$ with the Hermitian form defined by $\langle f,g \rangle = \lambda_0^{-1} \circ g^{\vee} \circ \lambda \circ f$ so that for all $x \in \okv$ we have
$$\langle xf,g \rangle = \langle f,\bar{x}g \rangle$$
Now using the above property we can find a unique $K_v$-valued $\okv$-Hermitian form $\langle , \rangle_{\text{CM}}$ on $L(A_0,A)$ satisfying $\langle f,g \rangle = \text{tr}_{K_v/K_{0,v_0}}\langle f,g \rangle_{\text{CM}}$ by a standard argument. 

Let $S = \okv \otimes_{\mathbb{Z}_p} W$, $\text{Fr} \in \text{Aut}\; W$ be the Frobenius automorphism, then on $S$ we have the induced automorphism $(x \otimes w)^{\text{Fr}} = x \otimes w^{\text{Fr}}$. For each $\psi: \okv^u \rightarrow W$, there exists an idempotent $e_{\psi} \in S$ satisfying $(x \otimes 1)e_{\psi} = (1 \otimes \psi(x))e_{\psi}$ for all $x \in \okv^u$. They satisfy $e_\psi^{\text{Fr}} = e_{\text{Fr} \circ \psi}$, $S = \prod_{\psi:{\okv^u \rightarrow W}} e_\psi S$ and $e_\psi S \cong O_{\check{K}_v}$, where $\check{K}_v$ is the maximal unramified extension of $K_v$. Let $m(\psi, \Phi_v) = \# \lbrace \phi \in \Phi_v | \phi|_{\okv^u} = \psi \rbrace$. Let $S_0 = \okw \otimes_{\mathbb{Z}_p} W$, then do the same as above for $S_0$. By Lemma 2.3.1 of \cite{howard}, we have that there exist $b \in S, b_0 \in S_0$ such that
$$L(A_0,A) \cong \lbrace s \in S | (b_0s)^{\text{Fr}} = b^{\text{Fr}}s \rbrace$$

\begin{proposition}
For some $\beta \in F_{v_F}^\times$ satisfying
\[\beta O_K =  \begin{cases} 
      \mathfrak{a}\p_{F_{v_F}} \mathcal{D}_{v_0} \mathcal{D}_v^{-1} O_{K_v} & \text{if}\; K_v/F_{v_F}\;  is \; \text{unramified} \\
      \mathfrak{a}\mathcal{D}_{v_0} \mathcal{D}_v^{-1} O_{K_v} & \text{if}\; K_v/F_{v_F}\; is \; \text{ramified}\; \\ 
   \end{cases}
\]
we have $L(A_0,A) \cong \okv$ as an $O_{K_v}$-module with $\langle x,y  \rangle_{\text{CM}} = \beta x\bar{y}$ on $\okv$.
\end{proposition}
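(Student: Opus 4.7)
The plan is to make explicit the description $L(A_0,A)\cong\{s\in S\mid (b_0s)^{\mathrm{Fr}}=b^{\mathrm{Fr}}s\}$ supplied by Lemma 2.3.1 of \cite{howard}, identify both $b$ and $b_0$ in terms of the $p$-adic CM-types $\Phi_v$ and $\Phi_{0,v_0}$, and then read off the $O_{K_v}$-module structure and the Hermitian form on the nose. First I would decompose $S=O_{K_v}\otimes_{\mathbb{Z}_p}W$ using the idempotents $e_\psi$ attached to unramified embeddings $\psi:O_{K_v}^u\to W$, and similarly decompose $S_0=O_{K_{0,v_0}}\otimes_{\mathbb{Z}_p}W$. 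In each $\psi$-component, $b$ and $b_0$ are (up to units) uniformizer powers whose exponents are the multiplicities $m(\psi,\Phi_v)$ and $m(\psi,\Phi_{0,v_0})$, together with a correction coming from $\mathfrak{a}$ on the $b$ side. Because we are in case~2, the two multiplicity functions agree at every $\psi$ except at the single distinguished embedding where $\Phi_v|_{K_{0,v_0}}$ picks up the conjugate of an element of $\Phi_{0,v_0}$; this single discrepancy is what forces $L(A_0,A)$ to be free of rank $1$ over $O_{K_v}$ rather than zero or higher rank.

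Next I would fix a generator of this rank-one module by matching the condition $(b_0 s)^{\mathrm{Fr}}=b^{\mathrm{Fr}}s$ against the Frobenius action on the $e_\psi S$-components, which are copies of $O_{\check K_v}$ permuted cyclically by $\mathrm{Fr}$. The resulting generator identifies $L(A_0,A)$ isomorphically with $O_{K_v}$ as an $O_{K_v}$-module; this gives the underlying module statement.

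For the Hermitian form, I would compute $\langle f,g\rangle=\lambda_0^{-1}\circ g^\vee\circ\lambda\circ f$ directly on the generator, using that the polarizations $\lambda$, $\lambda_0$ correspond on the Dieudonn\'e side to the trace pairings twisted by the differents $\mathcal D_v$, $\mathcal D_{v_0}$ and by $\mathfrak a$ for the kernel of $\lambda$. Taking the unique $K_v$-valued Hermitian lift $\langle\,,\,\rangle_{\mathrm{CM}}$ with $\mathrm{tr}_{K_v/K_{0,v_0}}\langle\,,\,\rangle_{\mathrm{CM}}=\langle\,,\,\rangle$ converts the $O_{K_{0,v_0}}$-valuation datum into an $O_{K_v}$-valuation datum, and the comparison of traces introduces precisely the factor $\mathcal D_{v_0}\mathcal D_v^{-1}$ via $\partial_{K_v/K_{0,v_0}}$. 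Writing the form as $\beta x\bar y$ with $\beta\in F_{v_F}^\times$, the ideal $\beta O_K$ is then pinned down by combining these local invariants: $\mathfrak a$ from the nonprincipal polarization, $\mathcal D_{v_0}\mathcal D_v^{-1}$ from the trace correction, and an extra $\mathfrak p_{F_{v_F}}$ in the unramified case to account for the fact that in that case the two conjugate embeddings in $\Phi_v$ and $\overline{\Phi_v}$ contribute a uniformizer from $F_{v_F}$ that is not already visible in the ramified case.

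The main obstacle I anticipate is bookkeeping the exponents in the idempotent decomposition when passing between the $O_{K_v}$- and $O_{K_{0,v_0}}$-structures: one must verify that at every $\psi$ other than the distinguished one the multiplicity conditions cancel, and that at the distinguished one the net discrepancy is exactly one (this uses the hypothesis $K_v\neq F_{v_F}$ crucially). Separating the ramified and unramified behaviour of $K_v/F_{v_F}$ in this bookkeeping is what produces the two-line formula for $\beta O_K$; once the Dieudonn\'e-level computation is carried out as in \cite{howard} but with $O_{K_{0,v_0}}$ in place of $O_{K_v}$ on one side, the remaining identifications are routine.
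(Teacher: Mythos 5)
Your overall strategy matches the paper's: both invoke Lemma 2.3.1/2.3.2 of Howard to realize $L(A_0,A)$ as $\{s\in S\mid (b_0s)^{\mathrm{Fr}}=b^{\mathrm{Fr}}s\}$, decompose $S$ via the idempotents $e_\psi$, and extract $\beta$ from a generator $s$ together with the datum $\xi S = \mathfrak{a}\,\mathcal{D}_{v_0}\mathcal{D}_v^{-1} S$ coming from the polarizations. However, your account of how the extra $\p_{F_{v_F}}$ arises in the unramified case is not correct and hides the one genuinely nontrivial computation. The factor does not come from ``the two conjugate embeddings in $\Phi_v$ and $\overline{\Phi_v}$'' contributing a uniformizer; it comes from computing $s\bar s S$ for a generator $s$ of $L(A_0,A)$. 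The recurrence $\mathrm{ord}_{\psi^{i+1}}(s)=\mathrm{ord}_{\psi^i}(s)-m(\psi^i,\Phi_v)+e(K_v/K_{0,v_0})\,m(\psi_0^i,\Phi_{0,v_0})$ shows that in the unramified case the sequence $(\mathrm{ord}_{\psi^0}(s),\ldots,\mathrm{ord}_{\psi^{f-1}}(s))$ has the blocked form $(0,\ldots,0,1,\ldots,1,0,\ldots,0)$ with $j=f/2$ ones, whence $\mathrm{ord}_{\psi^i}(s)+\mathrm{ord}_{\psi^{i+j}}(s)=1$ for all $i$ and $s\bar s S=\p_{F_{v_F}}S$; and in the ramified case $m(\psi^i,\Phi_v)=e(K_v/K_{0,v_0})\,m(\psi_0^i,\Phi_{0,v_0})$ identically, so the orders are constant and $s\bar s S=S$. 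Your plan, as written, does not surface this dichotomy; it only says the multiplicities ``agree at every $\psi$ except at the single distinguished embedding,'' which is a statement about the restriction to $K_{0,v_0}$ and does not, by itself, explain why the unramified case produces a full uniformizer in $F_{v_F}$ while the ramified case produces a unit.

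A secondary point: the claim that the single discrepancy is ``what forces $L(A_0,A)$ to be free of rank 1'' is misattributed. The rank-one freeness is the direct analogue of Howard's Lemma 2.3.2 and holds irrespective of the case-1/case-2 distinction; what the case-2 discrepancy governs is the \emph{valuation profile} of the generator $s$, hence $s\bar s$, hence the ideal $\beta O_K$. You should also be careful about the direction of the different identity: $\mathcal{D}_{v_0}\mathcal{D}_v^{-1}$ is $\partial_{K_v/K_{0,v_0}}^{-1}$ in $O_{K_v}$, not $\partial_{K_v/K_{0,v_0}}$. To turn your sketch into a proof you would need to write down and solve the order recurrence in both cases; once that is done the rest of your outline goes through.
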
 
 \begin{proof}
 In the same way as in lemma 2.3.2 of \cite{howard}, $L(A_0,A)$ is a free $\okv$-module of rank 1, let $s$ be $L(A_0,A) = s\okv$. Again as in lemma 2.3.2 of \cite{howard}, we get $\xi \in S \otimes_{\mathbb{Z}} \mathbb{Q}$ satisfying $\langle a,b \rangle = \xi a \bar{b}$ for $a,b \in L(A_0,A) \subseteq S$ and $\xi S = \mathfrak{a} \mathcal{D}_{v_0} \mathcal{D}_v^{-1} S$. Now we want to compute $s \bar{s} S$. Let $\lbrace \psi^0, \psi^1, \cdots, \psi^{f-1} \rbrace$ be the set of embeddings $\okv^u \rightarrow W$ and $\psi^i_0$ be the restriction of $\psi^{i}$ to $O^u_{K_0,v_0}$ ($0 \leq i \leq f-1$). Now by above we have $(b_0s)^{\text{Fr}} = b^{\text{Fr}}s$, so we get 
 $$\ord_{\psi^{i+1}}(s) = \ord_{\psi^i}(s) - \ord_{\psi^i}(b) + \ord_{\psi^i}(b_0) = \\
\ord_{\psi^i}(s) - m(\psi^i, \Phi_v) + e(K_v/K_{0,v_0})m(\psi_0^i,\Phi_{0,v_0}). $$
Assuming $K_v/F_{v_F}$ is unramified, an easy computation shows
\[m(\psi^i,\Phi_v) - e(K_v/K_{0,v_0})m(\psi_0^i,\Phi_{0,v_0})=  \begin{cases} 
      0 & \text{if}\; \phi^1_1|_{\okv^u} \neq \psi^i,\;\;\; \phi|_{\okw^u} \neq \psi^i_0 \\
      -1 & \text{if}\; \phi^1_1|_{\okv^u} = \psi^i,\;\;\; \phi|_{\okw^u} = \psi^i_0\\
      1 & \text{if}\;  \phi^1_1|_{\okv^u} = \psi^i,\;\;\; \phi|_{\okw^u} \neq \psi^i_0\\
      0 & \text{if}\;  \phi^1_1|_{\okv^u} \neq \psi^i,\;\;\; \phi|_{\okw^u} = \psi^i_0\\
   \end{cases}
\]
 so the sequence $(\ord\psi^0(s), \ord\psi^1(s),\cdots,\ord\psi^{f-1}(s))$ has the form $(0,0,\cdots,0,1,1,\cdots,1,0,\cdots,0)$ with the same number (say $j = \frac{f}{2}$) of 0's and 1's where $\psi^j$ is the restriction of conjugation (nontrivial automorphism of $\text{Gal}(K_v/F_{v_F})$) to $K_v^u$, and we then get 
 $$\ord_{\psi^i}(s) + \ord_{\psi^{i+j}}(s) = 1$$
 for all $i$ and so
 $s\bar{s} = \p_{F_{v_F}}S$.
 
 Now assuming $K_v/F_{v_F}$ ramified, we get $m(\psi^i_0,\Phi_{0,v_0}) = \frac{e(K_{0,v_0}/\qp)}{2}$ and $m(\psi^i, \Phi_v) = e(K_v/K_{0,v_0})m(\psi^i_0,\Phi_{0,v_0})$ so
 $$\ord_{\psi^{i+1}}(s) = \ord_{\psi^i}(s)$$
 for all $i$ and $s\bar{s}S=S$. Let $\epsilon$ be the ramification index of $\tilde{K}_p/K_v$.
 \end{proof}
 
 \begin{proposition}
 Suppose that $f$ is an $O_{K_v}$-module 
 generator of $L(A_0,A)$, then one can lift $f$ 
 to $L^{(k)}\backslash L^{(k+1)}$ with $k = 
 \epsilon \ord_{K_{0,v_0}} \mathcal{D}_{v_0}$ if 
 $K_v/F_{v_F}$ is ramified (resp. $k = \epsilon$ if $K_v/F_{v_F}$ is unramified).
 \end{proposition}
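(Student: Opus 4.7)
The plan is to mirror the strategy of Lemma 2.4.1 / Theorem 2.4.2 of \cite{howard}, using Grothendieck-Messing theory to convert the lifting question into a filtration-preservation condition and then computing the valuation of the obstruction via the explicit description of $L(A_0, A)$ from the previous proposition.

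\textbf{Step 1 (Reduction via Grothendieck-Messing).} I would begin by letting $R \in \mathrm{obj}(\mathbf{ART})$ be a suitable universal deformation ring (over $\tilde{W}$) of the triple $(A_0, \kappa_0, \lambda_0)$ and the triple $(A, \kappa, \lambda)$ simultaneously, and write $A_0^{(n)}, A^{(n)}$ for the unique deformations to $R^{(n)} = R/m_R^n$. As in the proof of Proposition~3.1, Grothendieck-Messing theory identifies $L^{(n)} = \Hom_{O_{K_{0,v_0}}}(A_0^{(n)}, A^{(n)})$ with the set of crystal maps $f: \mathcal{D}_{A_0^{(n)}}(R^{(n)}) \to \mathcal{D}_{A^{(n)}}(R^{(n)})$ sending the Hodge filtration $J_{0,\Phi_{0,v_0}} \mathcal{D}_{A_0^{(n)}}$ into $J_{\Phi_v} \mathcal{D}_{A^{(n)}}$. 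Thus lifting $f$ to $L^{(k)}$ amounts to checking that this filtration inclusion holds modulo $m_R^k$, and the largest such $k$ is the quantity we want.

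\textbf{Step 2 (Explicit form of the obstruction).} Using Lemma 2.3.1 of \cite{howard} together with Proposition~3.2, I identify $L(A_0, A)$ with $\okv$ equipped with Hermitian form $\beta x \bar{y}$, and transfer the crystal picture to the tensor products $S_0 = \okw \otimes_{\mathbb{Z}_p} W$ and $S = \okv \otimes_{\mathbb{Z}_p} W$ through the idempotents $e_\psi$. In case~2 one has $J_{0,\Phi_{0,v_0}} \cdot (\okv \otimes \tilde{W}) \not\subseteq J_{\Phi_v}$ at exactly the component corresponding to the distinguished embedding $\phi_1^1$: the $\psi^i_0$-components where $\phi_1^1|_{\okv^u} = \psi^i$ and $\phi|_{\okw^u} = \psi^i_0$ produce a discrepancy. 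I would pin down an explicit generator $\delta$ of the obstruction inside this $\phi_1^1$-component, so that the obstruction to lifting $f$ from $L^{(k)}$ to $L^{(k+1)}$ becomes the class of $f(\delta)$ in the one-dimensional $\fpbar$-vector space $m_R^k/m_R^{k+1}$.

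\textbf{Step 3 (Converting valuation to level).} With $\delta$ identified, I would compute its image under $f$ by reading off the valuations of the generator $s \in S$ from Proposition~3.2. In the unramified case, the sequence $(\ord_{\psi^i}(s))$ computed there jumps by $1$ at precisely the $\phi_1^1$-site, so the obstruction has valuation $1$ when measured in the uniformizer of $O_{K_v}$; passing to the universal deformation ring $R$ (whose maximal ideal absorbs the ramification index $\epsilon$ of $\tilde{K}_p/K_v$) yields $k = \epsilon$. In the ramified case, the sequence $(\ord_{\psi^i}(s))$ is constant, so the obstruction is produced instead through the different $\mathcal{D}_{v_0}$ (which enters the formula $\xi S = \mathfrak{a}\mathcal{D}_{v_0}\mathcal{D}_v^{-1}S$ of the preceding proposition), giving $k = \epsilon \, \ord_{K_{0,v_0}} \mathcal{D}_{v_0}$.

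\textbf{Main obstacle.} The delicate part is Step~2: correctly identifying which $\psi$-component houses the obstruction and verifying that $\delta$ is genuinely the minimal-valuation generator, rather than being killed by a subtler relation among the idempotent pieces. This requires matching the CM-type bookkeeping of case~2 (a single $\phi_1^1$ whose restriction differs from $\Phi_{0,v_0}$) to the filtration structure on the crystal. Once the generator of the obstruction is isolated, converting its $O_{K_v}$-valuation to the level $k$ via the ramification index $\epsilon$ of $\tilde{K}_p/K_v$ is essentially bookkeeping, and the split between the two cases follows from the valuation analysis already performed in the proof of Proposition~3.2.
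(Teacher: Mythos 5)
Your plan for the unramified subcase is essentially the paper's argument: reduce to a Grothendieck--Messing filtration problem over $\tilde{W}^{(k)}$, identify the crystals with $S_0 \otimes_W \tilde{W}^{(k)}$ and $S \otimes_W \tilde{W}^{(k)}$, and read the obstruction off the valuation of $s$ at the distinguished $\psi$-component (which is $1$ in $W$, hence $\epsilon$ in $\tilde{W}$). One small inaccuracy there: deformations with CM are unique, so the relevant local ring is just $\tilde{W}$ itself, not a bigger universal ring $R$; you should be computing directly in $m_{\tilde{W}}^k/m_{\tilde{W}}^{k+1}$.

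The ramified subcase, however, contains a genuine gap. Your Step~3 says that since $(\ord_{\psi^i}(s))$ is constant, the obstruction ``is produced instead through the different $\mathcal{D}_{v_0}$ which enters the formula $\xi S = \mathfrak{a}\mathcal{D}_{v_0}\mathcal{D}_v^{-1}S$.'' This is not the right mechanism: the formula for $\xi$ only describes the Hermitian form on $L(A_0,A)$ and has no direct bearing on the filtration-preservation obstruction. In fact, when $K_v/F_{v_F}$ is ramified the computation you outline in Step~2--3 collapses, because $s$ is a unit in $S$, so ``reading off the valuations of $s$'' gives nothing: the obstruction no longer lives in the valuation of $s$ at all. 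The paper handles this by an entirely different argument. Since $s \in S^\times$, multiplication by $s$ is an isomorphism of Dieudonn\'e modules $D(A_0)\otimes_{O_{F_{0,v_{0,F_0}}}}O_{F_{v_F}} \xrightarrow{\sim} D(A)$, hence an isomorphism of Lie algebras. Nakayama then upgrades this to an isomorphism $\text{Lie}(A_0^{(k)}) \otimes O_{F_{v_F}} \cong \text{Lie}(A^{(k)})$ whenever $f$ extends to $\tilde{W}^{(k)}$, which forces the Kottwitz conditions to coincide in $\tilde{W}^{(k)}[t]$: $\prod_{\phi \in \Phi_v}(t-\phi(x)) = \prod_{\phi \in \tilde{\Phi}_{0,v_0}}(t-\phi(x))$. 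Cancelling the common factors leaves $\phi^1_1 \equiv \overline{\phi^1_1} \pmod{m^k}$, and \emph{that} congruence is what puts the different $\mathcal{D}_{v_0}$ on the table, giving the upper bound $k \leq \epsilon\, \ord_{K_{0,v_0}}\mathcal{D}_{v_0}$. The lower bound (that one really can lift that far) is then a separate construction via the Serre tensor $A_0^{(k)} \otimes_{O_{F_{0,v_{0,F_0}}}} O_{F_{v_F}}$ and uniqueness of CM deformations. Neither half of this two-sided argument is present in your sketch, so as written the ramified case does not go through.
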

 Let $\D_{v_0},\D_{v}$ be Grothendieck-Messing crystals of $A_0,A$. Now $\ker(\tilde{W}^{(2)} \rightarrow\tilde{W}^{(1)} = \fpbar)$ has a divided power structure compatible with $p\tilde{W}^{2}$ (either the trivial divided power structure if $\tilde{W}/W$ is ramified and the canonical divided powers on $p\tilde{W}^{(2)}$ otherwise), now we have by \cite{howard},
 $$\D_{v_0}(\tilde{W}^{(2)}) \cong S_0 \otimes_W \tilde{W}^{(2)}$$
 $$\D_v(\tilde{W}^{(2)}) \cong S \otimes_W \tilde{W}^{(2)}$$
 Hodge filtrations are $J_{\Phi_{0,v_0}}\D_{v_0}(\tilde{W}^{(2)})$ and $J_{\Phi_v}\D_v(\tilde{W}^{(2)})$ and $f$ lifts to a map $A_0^{(2)} \rightarrow A^{(2)}$ (where $A_0^{(2)}$ and $A^{(2)}$ are unique deformations of $A_0$ and $A$ to $\tilde{W}^{(2)}$) iff 
 $$f: J_{\Phi_{0,v_0}}\D_{v_0}(\tilde{W}^{(2)}) \rightarrow \D_v(\tilde{W}^{(2)})/J_{\Phi_v}\D_v(\tilde{W}^{(2)})$$
 is trivial. If $f \in \Hom_{\okw}(A_0,A) \subseteq S$ corresponds to $s \in S$, then we consider the multiplication by $s$ 
 $$J_{\Phi_{0,v_0}}(S_0 \otimes_W \tilde{W}) \rightarrow (S \otimes_W \tilde{W})/J_{\Phi_v}(S \otimes_W \tilde{W}).$$
 Now by mapping 
 $$(S \otimes_W \tilde{W})/J_{\Phi_v}(S \otimes_W \tilde{W}) \xrightarrow{(\bar{\phi}^1_1,\phi^2_1,\cdots,\phi^n_1)} \cp^{nd}$$
 Firstly, assuming $K_v/F_{v_F}$ is unramified, $\phi_i^j(s) = 0$ for all $(i,j) \neq (1,1)$ and for $\bar{\phi}^1_1$, it goes to
 $$\bar{\phi}^1_1(s)\prod_{\phi|_{K_{0,v_0}^u} = \phi_1|_{K_{0,v_0}^u}} (\phi_1(s) - \phi(s))$$
 where the product is over the $\phi: K_{0,v_0} \rightarrow \cp$ with the aforementioned property. Now as $\bar{\phi}_1|_{\okw^u} \neq \phi_1|_{\okw^u}$ all components of product above are units except for $\bar{\phi}^1_1(s)$ which has valuation 1 in $W$, so valuation $\epsilon$ in $\tilde{W}$. So using a similar idea for $\tilde{W}^{(1)}, \tilde{W}^{(2)}, \cdots, \tilde{W}^{(\epsilon)}$ we can lift $f$ to them, but not to $\tilde{W}^{(\epsilon+1)}$.
 
 Secondly, if $K_v/F_{v_F}$ is ramified, suppose 
 that we extended $f$ to $\tilde{W}^{(k)}$, then 
 by prop. 2.3.3 of \cite{howard}, $s \in 
 S^{\times}$ so the induced map on Dieudonne 
 modules $D(A_0) \otimes_{O_{F_{0_{v_{0,F_0}}}}} 
 O_{F_{v_F}} \rightarrow D(A)$ is an isomorphism. So $f$ induces an isomorphism of Lie algebras 
 $$\text{Lie}(A_0) \otimes_{O_{F_{0_{v_{0,F_0}}}}} O_{F_{v_F}} \cong \text{Lie}(A)$$
 Now as we assumed $f$ extends to $\tilde{W}^{(k)}$, Nakayama's lemma implies that the induced map 
 $$\text{Lie}(A_0^{(k)}) \otimes_{O_{F_{0_{v_{0,F_0}}}}} O_{F_{v_F}} \cong \text{Lie}(A^{(k)})$$
 where $A_0^{(k)}$ and $A^{k}$ are deformations of $A_0$ and $A$ to $\tilde{W}^{(k)}$. So in $\tilde{W}^{(k)}[t]$, we have 
 $$\prod_{\phi \in \Phi_v}(t-\phi(x)) = \prod_{\phi \in \tilde{\Phi}_{0,v_0}}(t-\phi(x))$$ 
 where $\tilde{\Phi}_{0,v_0}$ is 
 $\lbrace \overline{\phi^1_1}, \phi^2_1, \cdots, \phi^n_1, \phi_2^1, \cdots \rbrace$. So we get that $\phi^1_1 = \overline{\phi^1_1} (\text{mod}\; m^k)$ which implies that $k \leq \epsilon \ord_{K_{0,v_0}} \mathcal{D}_{v_0}$.\\
 Now suppose that $k \leq \epsilon \ord_{K_{0,v_0}} \mathcal{D}_{v_0}$, then the $\okv$-action on $A_0^{(k)} \otimes_{O_{F_{0_{v_{0,F_0}}}}} O_{F_{v_F}}$ satisfies $\Phi_v$-determinant condition and so $f: A_0 \otimes_{O_{F_{0_{v_{0,F_0}}}}} O_{F_{v_F}} \rightarrow A$ is an isomorphism of $p$-divisible groups and so one can see $A_0^{(k)} \otimes_{O_{F_{0_{v_{0,F_0}}}}} O_{F_{v_F}}$ as a deformation of $A$ to $\tilde{W}^{(k)}$. By uniqueness of such deformations, there exists an $\okv$-linear isomorphism 
 $$A_0^{(k)} \otimes_{O_{F_{0_{v_{0,F_0}}}}} O_{F_{v_F}} \rightarrow A^{(k)}$$ lifting $f$, so by composing with $A_0^{(k)} \hookrightarrow A_0^{(k)} \otimes_{O_{F_0,v_{0,F_0}}} O_{F_{v_F}}$, we get the lift $A_{0}^{(k)} \rightarrow A^{(k)}$ of $f$.
 
 \begin{proposition}
 Let $\pi_{K_v}$ be a uniformizer of $\okv$. If $f \in L^{(k)}$, then $\pi_{K_v}f \in L^{(k+\epsilon)}$ and the multiplication map by $\pi_{K_v}$ map induces an injective map $L^{(k)}\backslash L^{(k+1)} \rightarrow L^{(k+\epsilon)}\backslash L^{(k+\epsilon+1)}$.
 \end{proposition}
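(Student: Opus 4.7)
The plan is to deduce both assertions from the observation that multiplication by $\pi_{K_v}$ shifts the Grothendieck--Messing obstruction analyzed in Proposition~3.3 by exactly $\epsilon$ units of $\tilde{W}$-valuation.

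First, because the $O_{K_v}$-action on $A$ lifts canonically with any deformation (Proposition~3.1, based on \cite{howard}), post-composing a lift of $f \in L$ with $\kappa^{(k)}(x)$ yields a lift of $xf$ for every $x \in O_{K_v}$. Hence each $L^{(k)} \subseteq L = L(A_0, A) \cong O_{K_v}$ is an $O_{K_v}$-submodule, and so $L^{(k)} = \pi_{K_v}^{a(k)} L$ for a weakly increasing sequence $a(k)$, with each quotient $L^{(k)}/L^{(k+1)}$ either zero or one-dimensional over the residue field of $O_{K_v}$. Both parts of the proposition together amount to the single claim that $a(k+\epsilon) \le a(k)+1$ always, with equality whenever $a(k+1) = a(k)+1$.

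Second, I would revisit the obstruction formalism from the proof of Proposition~3.3. For $g \in L$ with parameter $s \in S$ and an existing lift to level $k$, the obstruction to further lifting is the multiplication-by-$s$ map
\[
J_{0,\Phi_{0,v_0}}\bigl(S_0 \otimes_W \tilde{W}^{(k+1)}\bigr) \;\xrightarrow{\,\cdot s\,}\; \bigl(S \otimes_W \tilde{W}^{(k+1)}\bigr)\big/J_{\Phi_v}\bigl(S \otimes_W \tilde{W}^{(k+1)}\bigr),
\]
which is linear in $s$. After the appropriate splitting, the target is a direct sum of copies of $\tilde{W}^{(k+1)}$ indexed by embeddings $\phi\colon K_v \hookrightarrow \tilde{K}_p$, on which the $O_{K_v}$-action through $\pi_{K_v}$ is by multiplication by $\phi(\pi_{K_v}) \in \tilde{W}$. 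Since $\pi_{K_v}$ is a uniformizer of $K_v$ and $\tilde{W}$ shares its uniformizer with $\tilde{K}_p$ (which has $K_v$-ramification index $\epsilon$), every such $\phi(\pi_{K_v})$ has $\tilde{W}$-valuation exactly $\epsilon$. Consequently, replacing $s$ by $\pi_{K_v}s$ multiplies each component of the obstruction by an element of valuation $\epsilon$, shifting the $\tilde{W}$-valuation of the obstruction by exactly $\epsilon$.

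Both parts then fall out at once: if $g \in L^{(k)}$, the obstruction of $g$ has $\tilde{W}$-valuation $\ge k$, so that of $\pi_{K_v}g$ has valuation $\ge k+\epsilon$, giving $\pi_{K_v}g \in L^{(k+\epsilon)}$; and if additionally $g \notin L^{(k+1)}$, the obstruction has valuation exactly $k$, so that of $\pi_{K_v}g$ has valuation exactly $k+\epsilon$, yielding $\pi_{K_v}g \notin L^{(k+\epsilon+1)}$ and the claimed injectivity on quotients. The main obstacle is the ramified case, where the proof of Proposition~3.3 avoided a direct crystal computation in favour of the Lie-algebra isomorphism and uniqueness of deformations. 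To make the valuation-shift argument uniform I would re-derive that obstruction directly as the $m^{k+1}$-component of the multiplication-by-$s$ map above, and then confirm that the $\epsilon$-shift under $s \mapsto \pi_{K_v}s$ carries over without change.
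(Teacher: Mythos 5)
Your plan and the paper share the one genuine insight: multiplication by $\pi_{K_v}$ acts on the Grothendieck--Messing obstruction through $\phi(\pi_{K_v})$ for $\phi\in\Hom(K_v,\tilde K_p)$, and every such element has $\tilde W$-valuation exactly $\epsilon$. But there are two gaps that you have not filled, and the second one you flag yourself.

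First, the reduction of the proposition to statements about the sequence $a(k)$ is not quite right. Writing $L^{(k)}=\pi_{K_v}^{a(k)}L$, the first assertion is $a(k+\epsilon)\le a(k)+1$, as you say. But the injectivity assertion unwinds to: whenever $L^{(k)}\setminus L^{(k+1)}\ne\emptyset$ one needs $a(k+\epsilon+1)\ge a(k+1)+1$ (which, combined with part one applied at $k+1$, forces equality). This is not the same as ``$a(k+\epsilon)=a(k)+1$ whenever $a(k+1)=a(k)+1$''; the latter does not control $a(k+\epsilon+1)$ at all. Moreover, the intermediate claim that $L^{(k)}/L^{(k+1)}$ is at most one-dimensional over the residue field of $O_{K_v}$ (equivalently $a(k+1)\le a(k)+1$) is asserted without proof and is in fact a \emph{consequence} of the proposition, not an input.

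Second, the valuation-shift argument assumes that for every $k$ the obstruction to lifting past level $k$ can be identified with the multiplication-by-$s$ map on $J_{0,\Phi_{0,v_0}}\bigl(S_0\otimes_W\tilde W^{(k+1)}\bigr)\to\bigl(S\otimes_W\tilde W^{(k+1)}\bigr)/J_{\Phi_v}$ and, crucially, that this obstruction has a well-defined $\tilde W$-valuation that shifts by exactly $\epsilon$ under $s\mapsto\pi_{K_v}s$. Proposition 3.3's proof only carried out the direct crystal computation at level $\tilde W^{(2)}$; for higher $k$ in the ramified case it switched to the Lie-algebra/uniqueness argument precisely to avoid this computation, and you acknowledge you would need to redo it. The paper sidesteps this entirely: it looks at the single PD-thickenings $\tilde W^{(k+\epsilon)}\to\tilde W^{(k)}$ and $\tilde W^{(k+\epsilon+1)}\to\tilde W^{(k)}$, notes that the bottom row of the resulting square already vanishes (since $f^{(k)}$ is a genuine morphism over $\tilde W^{(k)}$), so the obstruction image lies in the kernel of the reduction and hence is annihilated by $m^\epsilon$; multiplying by $\phi^1_1(\pi_{K_v})\in m^\epsilon\setminus m^{\epsilon+1}$ then kills it (for the lifting statement) or, read the other way, forces the obstruction for $f$ to live in $m^{k+1}$ (for injectivity). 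This diagrammatic argument never needs an explicit description of the obstruction at general level, which is exactly the piece your plan lacks.
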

\begin{proof}
Let $\D_0^{(k)}$, $\D^{(k)}$ be Grothendieck-Messing crystals of $A_0^{(k)}$ and $A^{(k)}$, now $\tilde{W}^{(k+\epsilon)} \rightarrow \tilde{W}^{(k)}$ is a PD-thickening
\[ \begin{tikzcd}
J_{0,\Phi_{0,v_0}}\D_0^{(k)}(\tilde{W}^{(k+\epsilon)}) \arrow{r}{f^{(k)}} \arrow[swap]{d}{} & \D^{(k)}(\tilde{W}^{(k+\epsilon)})/J_{\Phi_v}(\D^{(k)}(\tilde{W}^{k+\epsilon})) \arrow{d}{} \\%
J_{0,\Phi_{0,v_0}}\D_0^{(k)}(\tilde{W}^{(k)}) \arrow{r}{f^{(k)}}& \D^{(k)}(\tilde{W}^{(k)})/J_{\Phi_v}(\D^{(k)}(\tilde{W}^{(k)}))
\end{tikzcd}
\]
bottom row is the zero map, so the top row becomes zero after $\otimes_{\tilde{W}^{(k+\epsilon)}} \tilde{W}^{(k)}$, so its image annihilated by $m^{\epsilon}$, and so
$\pi_{K_v} f^{(k)} = \phi^1_1(\pi_{K_v}) f^{(k)}$ is zero on top row and so can be lifted to $L^{(k+\epsilon)}$. Now suppose that $f \in L^{(k)}$, we have the PD-thickening 
$\tilde{W}^{(k+\epsilon+1)} \rightarrow \tilde{W}^{(k)}$, so we get a diagram
\[ \begin{tikzcd}
J_{0,\Phi_{0,v_0}}\D_0^{(k)}(\tilde{W}^{(k+\epsilon+1)}) \arrow{r}{f^{(k)}} \arrow[swap]{d}{} & \D^{(k)}(\tilde{W}^{(k+\epsilon+1)})/J_{\Phi_v}(\D^{(k)}(\tilde{W}^{(k+\epsilon+1)})) \arrow{d}{} \\%
J_{0,\Phi_{0,v_0}}\D_0^{(k)}(\tilde{W}^{(k+\epsilon)}) \arrow{r}{f^{(k)}} \arrow[swap]{d}{}& \D^{(k)}(\tilde{W}^{(k+\epsilon)})/J_{\Phi_v}(\D^{(k)}(\tilde{W}^{(k+\epsilon)})) \arrow{d}{}\\%
J_{0,\Phi_{0,v_0}}\D_0^{(k)}(\tilde{W}^{(k+1)}) \arrow{r}{f^{(k)}} \arrow[swap]{d}{}& \D^{(k)}(\tilde{W}^{(k+1)})/J_{\Phi_v}(\D^{(k)}(\tilde{W}^{(k+1)})) \arrow{d}{} \\%
J_{0,\Phi_{0,v_0}}\D_0^{(k)}(\tilde{W}^{(k)}) \arrow{r}{f^{(k)}}& \D^{(k)}(\tilde{W}^{(k)})/J_{\Phi_v}(\D^{(k)}(\tilde{W}^{(k)}))
\end{tikzcd}
\]
now assume that $\pi_{K_v} f^{(k)}$ can be lifted to $L^{(k+\epsilon+1)}$, so in the top row of the diagram above $f^{(k)}$ has image inside $m^{k+1}$, so the map $J_{0,\Phi_{v_0}}\D_0^{(k)}(\tilde{W}^{(k+1)}) \rightarrow \D^{(k)}(\tilde{W}^{(k+1)})/J_{\Phi_v}\D^{(k)}(\tilde{W}^{(k+1)})$ gotten by the PD-thickening 
$\tilde{W}^{(k+1)} \rightarrow \tilde{W}^{(k)}$ is zero and $f^{(k)}$ can be lifted to $L^{(k+1)}$.
\end{proof}

\begin{theorem}
Suppose that the ramification condition is satisfied, then for any nonzero $f \in L(A_0,A)$ with $\langle f,f \rangle = \alpha$,  we have $f \in L^{(k)}\backslash L^{(k+1)}$ where 
$$k=\frac{1}{2}\ord_{\p}(\alpha \mathfrak{a}^{-1}\p_{F_{v_F}} \mathcal{D}_{v_0}^{-1} \mathcal{D}_v).$$
\end{theorem}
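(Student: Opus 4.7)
The strategy is to leverage the $O_{K_v}$-module structure of $L(A_0,A)$ from the preceding proposition: since $L(A_0,A)\cong O_{K_v}$ is free of rank one, any nonzero $f$ has the form $f = \pi_{K_v}^m u f_0$, where $f_0$ is a fixed generator, $u\in O_{K_v}^\times$ and $m = \ord_{K_v}(f)\ge 0$. The unit $u$ acts via $\kappa$ as an $O_{K_v}$-linear automorphism of $A$, hence induces an isomorphism of Grothendieck--Messing crystals preserving the Hodge filtration; composing with this automorphism shows that $f$ and $\pi_{K_v}^m f_0$ lift to deformations over exactly the same $\tilde{W}^{(k)}$, so they share the same filtration level, and it suffices to compute the level of $\pi_{K_v}^m f_0$.

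Chaining the two immediately preceding propositions then finishes the structural part. By the proposition on generators, $f_0 \in L^{(k_0)}\setminus L^{(k_0+1)}$ with $k_0 = \epsilon$ if $K_v/F_{v_F}$ is unramified and $k_0 = \epsilon\,\ord_{K_{0,v_0}}\mathcal{D}_{v_0}$ if ramified. Applying the proposition on multiplication by $\pi_{K_v}$ a total of $m$ times, the injectivity on the graded pieces $L^{(j)}/L^{(j+1)} \hookrightarrow L^{(j+\epsilon)}/L^{(j+\epsilon+1)}$ ensures at each step that strictly passing from $L^{(j)}\setminus L^{(j+1)}$ to $L^{(j+\epsilon)}\setminus L^{(j+\epsilon+1)}$. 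Iterating, $\pi_{K_v}^m f_0 \in L^{(k_0+m\epsilon)}\setminus L^{(k_0+m\epsilon+1)}$, and hence so does $f$.

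To match the claimed closed form, I would expand the Hermitian form: $\langle f,f\rangle_{CM} = (u\bar u)(\pi_{K_v}\bar\pi_{K_v})^m\beta$, use the explicit ideal $\beta O_{K_v}=\mathfrak{a}\,\p_{F_{v_F}}^{\delta}\,\mathcal{D}_{v_0}\mathcal{D}_v^{-1}$ from the preceding proposition (with $\delta=1$ unramified, $\delta=0$ ramified), and note $\ord_{K_v}(\pi_{K_v}\bar\pi_{K_v})=2$ in both cases. Translating through $\alpha=\operatorname{tr}_{K_v/K_{0,v_0}}\langle f,f\rangle_{CM}$ and the standing assumption $O_{K_0}\otimes_\mathbb{Z} O_F = O_K$ (which, together with the coprimality of relative discriminants, lets one compare valuations on $F_{v_F}$ and $F_{0,v_{0,F_0}}$ cleanly), one obtains $\ord_\p(\alpha\,\mathfrak{a}^{-1}\p_{F_{v_F}}\mathcal{D}_{v_0}^{-1}\mathcal{D}_v) = 2(k_0+m\epsilon)$, where the extra factor $\p_{F_{v_F}}$ in the formula is precisely what absorbs the case distinction between the two values of $k_0$.

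The main obstacle is this final valuation reconciliation: the generator level $k_0$ genuinely differs between the ramified and unramified cases, yet the target is stated uniformly, so one must verify that the single factor $\p_{F_{v_F}}$ in the ideal indeed compensates after passing through the trace and through the ramification $\epsilon$ of $\tilde{K}_p/K_v$. A minor sanity check is that the unit prefactor $u\bar u\in O_{K_v}^\times$ contributes nothing to any valuation, and that the $m=0$ specialisation recovers exactly the generator levels from the previous proposition, as it must.
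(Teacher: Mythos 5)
Your proposal follows exactly the same route as the paper's proof: write $f = \pi_{K_v}^n f_0$ for an $O_{K_v}$-generator $f_0$ (the unit factor you track explicitly is suppressed in the paper but is indeed harmless), read off the level of $f_0$ from Proposition 3.3, propagate by iterating Proposition 3.4, and then convert $n$ into a valuation of $\alpha$ using the ideal for $\beta$ from Proposition 3.2. One small correction: in the theorem's statement $\alpha$ is already the value of the $K_v$-valued form $\langle\,,\,\rangle_{\mathrm{CM}}$ (the paper writes $\alpha O_{K_v} = \langle f,f\rangle O_{K_v}$ with $\ord_{K_v}$ applied to it), so there is no additional trace-to-$K_{0,v_0}$ step to unwind.

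The ``main obstacle'' you flag is genuinely the crux, and your optimistic claim that the single factor $\p_{F_{v_F}}$ absorbs the case split in $k_0$ is not automatic. What $\p_{F_{v_F}}$ actually compensates for is (i) the presence or absence of the $\p_{F_{v_F}}$ factor in $\beta O_{K_v}$ from Proposition 3.2, and (ii) the jump in $\ord_{K_v}(\p_{F_{v_F}})$ from $1$ to $2$ between the inert and ramified cases. Carrying out the arithmetic as you propose, in the ramified case one gets $k_0 + m\epsilon = \epsilon\bigl(\ord_{K_{0,v_0}}\mathcal{D}_{v_0} + m\bigr)$ from Propositions 3.3--3.4, while the target formula gives $\epsilon(m+1)$; these agree only if $\ord_{K_{0,v_0}}\mathcal{D}_{v_0} = 1$. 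That identity does hold once the ramified extension in question is tame of degree $2$ (which is what the standing ramification hypothesis on $\tilde K_p/\qp$ secures, reading the $\mathcal{D}_{v_0}$ in Proposition 3.3's ramified case as the relative different of $K_{0,v_0}/F_{0,v_{0,F_0}}$), but it is a step you would have to supply. For what it is worth, the paper's own proof asserts $k = (n+1)\epsilon$ in both cases without spelling this out, so your instinct to scrutinize that reconciliation is exactly right.
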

\begin{proof}
Let $f = f_0 \pi_{K_v}^n$ for an $O_{K_v}$-module generator $f_0$ of $L(A_0,A)$. By previous proposition, we know that we can lift $f$ to $L^{(n+1)\epsilon}\backslash L^{(n+1)\epsilon+1}$. In order to compute $k = (n+1)\epsilon$ in terms of $\alpha$, we have

If $K_v/F_{v_F}$ is unramified, 
$$\alpha \okv = \langle f,f \rangle\okv = \p_{K_v}^{2n+1}\mathfrak{a} \mathcal{D}_{v_0} \mathcal{D}_v^{-1}\okv$$
so $$n = \frac{\ord_{K_v}(\alpha \mathfrak{a}^{-1}\mathcal{D}_{v_0}^{-1} \mathcal{D}_v) -1}{2} \Rightarrow (n+1)\epsilon = \frac{\ord_{\tilde{K}_p}(\alpha\mathfrak{a}^{-1}\mathcal{D}_{v_0}^{-1} \mathcal{D}_v \p_{F_{v_F}})}{2}$$
If $K_v/F_{v_F}$ is ramified, $\alpha \okv = \langle f,f \rangle \okv = \p_{K_v}^{2n} \mathfrak{a} \mathcal{D}_{v_0} \mathcal{D}_v^{-1} \okv$
so
$$n = \frac{1}{2}\ord_{K_v}(\alpha \mathfrak{a}^{-1}\mathcal{D}_{v_0} \mathcal{D}_v^{-1}) \Rightarrow (n+1)\epsilon = \frac{\ord_{\tilde{K}_p}(\alpha\mathfrak{a}^{-1}\mathcal{D}_{v_0}^{-1} \mathcal{D}_v \p_{F_{v_F}})}{2}$$
\end{proof}

\section{Global part}
\subsection{Rapoport-Smithling-Zhang Shimura varieties} 
Here we shall recall some notions from \cite{rsz} that we are going to use later. We use notations from notations section freely. Also if $\p$ is a prime in $\tilde{K}$, we assume that $\tilde{K}_p$ in the notations section is $\tilde{K}_{\p}$.
We first define the Deligne-Mumford stack $\mathcal{M}_0$ over $O_{\tilde{K}}$. 
This is the Deligne-Mumford stack that for a scheme $S$ over $O_{\tilde{K}}$, gives the groupoid of tuples $(A_0,\iota_0,\lambda_0)$ with $A_0$ an abelian 
scheme over $S$ with $O_{K_0}$-action $\iota: O_{K_0} \rightarrow \text{End} A_0$ with $\Phi_0$-Kottwitz condition:
$$\text{charpol}(\iota_0(a))|_{\text{Lie}(A_0)}(t) = \prod_{\phi \in \Phi_0} (t-\phi(a))$$
for all $a \in O_{K_0}$ and $\lambda: A_0 \rightarrow A_0^{\vee}$ is a principal polarization such that its Rosati involution on $O_{K_0}$ by $K_0$ is the nontrivial conjugation of $K_0/F_0$.  Now consider $L_{\Phi_0}$ to be the set of isomorphism classes of pairs $(\Lambda_0, \langle , \rangle_0)$ where $\Lambda_0$ is a locally free $O_{K_0}$-module of rank 1 with a nondegenerate alternating pairs $\langle , \rangle_0: \Lambda_0 \times \Lambda_0 \rightarrow \mathbb{Z}$ with $\langle ax, y \rangle =  \langle x, \bar{a}y \rangle$ for all $x,y \in O_{K_0}$ such that $x \rightarrow \langle \sqrt{\Delta}x, x\rangle_0$ is negative definite quadratic form on $\Lambda_0$ and that $\Lambda_0$ inside $\Lambda_0 \otimes \mathbb{Q}$ is a self-dual lattice. $L_{\Phi_0}$ is finite (page 11 of \cite{rsz}) and an object $(A_0, \iota_0, \lambda_0) \in \mathcal{M}_0(\mathbb{C})$ gives a unique element of $L_{\Phi_0}$ by considering $H_1(A_0(\mathbb{C}), \mathbb{Z})$ endowed with Riemann form, which gives a bijection between isomorphism classes of objects of $\mathcal{M}_0(\mathbb{C})$ and $L_{\Phi_0}$.

Now the point of the previous paragraph is that this decomposition extends to the whole integral model $\mathcal{M}_0$ over $O_{\tilde{K}}$ using the following equivalence relation  $L_{\Phi_0}: \Lambda_0 \cong \Lambda_0^{\prime}$
if $\Lambda_0 \otimes \hat{\mathbb{Z}}$ and $\Lambda_0^{\prime} \otimes \hat{\mathbb{Z}}$ are $\hat{O}_{K_0}$-linearly similar up to a factor in $\hat{\mathbb{Z}}^{\times}$ and if $\Lambda_0 \otimes \mathbb{Q}$ and $\Lambda_0^{\prime} \otimes \mathbb{Q}$ are $K_0$-linearly similar up to a factor in $\mathbb{Q}^{\times}$. Now the decomposition is as follows:

\begin{proposition}
(Lemma 3.4 in \cite{rsz}) The stack $\mathcal{M}_0$ admits a decomposition to open and closed substacks $\mathcal{M}_0 = \coprod_{\xi \in L_{\Phi_0}/\cong} \mathcal{M}_0^{\xi}$ (on the level of $\mathbb{C}$-points, this is just the equivalence class of $H_1(A_0(\mathbb{C}), \mathbb{Z})$ as explained above).
\end{proposition}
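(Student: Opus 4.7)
The plan is to define a locally constant invariant on $\mathcal{M}_0$ valued in the finite set $L_{\Phi_0}/\cong$; the decomposition then follows because each fiber of the invariant is automatically an open and closed substack, and $L_{\Phi_0}/\cong$ is finite by page 11 of \cite{rsz}.

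First I would nail down the invariant on complex points, recalling the classical assignment: for $(A_0,\iota_0,\lambda_0)\in\mathcal{M}_0(\mathbb{C})$, attach the lattice $H_1(A_0(\mathbb{C}),\mathbb{Z})$ together with the Riemann form determined by $\lambda_0$. The $\Phi_0$-Kottwitz condition together with principality and Rosati-compatibility of $\lambda_0$ guarantees that this lattice belongs to $L_{\Phi_0}$, and (as indicated immediately before the statement) this construction already descends to a bijection between $\mathcal{M}_0(\mathbb{C})/\cong$ and $L_{\Phi_0}$.

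Next, to extend the invariant to an arbitrary geometric point $\bar{s}\to\mathcal{M}_0$, I would assemble the adelic data coming from Tate modules. For every prime $\ell$ invertible in the residue field of $\bar{s}$, the Tate module $T_\ell(A_{0,\bar{s}})$ is a free $O_{K_0}\otimes\mathbb{Z}_\ell$-module of rank one carrying the Weil pairing induced by $\lambda_0$. At the residue characteristic $p$ I would apply Serre-Tate to lift $(A_{0,\bar{s}},\iota_0,\lambda_0)$ canonically to the Witt ring and then use the $p$-adic \'etale Tate module of the lift (equivalently the covariant Dieudonn\'e module with its crystalline polarization) to supply the missing component. The rational $K_0$-structure is pinned down uniformly by $\Phi_0$ through the Kottwitz condition, so after passing to the equivalence relation on $L_{\Phi_0}$---which absorbs precisely the $\hat{\mathbb{Z}}^\times$-ambiguity in the adelic similarity class and the $\mathbb{Q}^\times$-ambiguity in the rational structure---one obtains a well-defined element $\xi(\bar{s})\in L_{\Phi_0}/\cong$ agreeing with the complex construction whenever $\bar{s}$ comes from a characteristic-zero point.

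Finally, I would verify that $\bar{s}\mapsto\xi(\bar{s})$ is locally constant on $\mathcal{M}_0$. On any connected test scheme $S$, the $\ell$-adic Tate module of the universal abelian scheme (with polarization and $O_{K_0}$-action) is a lisse sheaf, so its $O_{K_0}\otimes\mathbb{Z}_\ell$-similarity class with pairing is constant on $S$; at the residue characteristic, specialization in crystalline cohomology preserves the $p$-adic similarity class by standard Dieudonn\'e theory in families. Since the target set $L_{\Phi_0}/\cong$ is discrete and finite, the resulting map is locally constant, giving the decomposition $\mathcal{M}_0=\coprod_\xi\mathcal{M}_0^\xi$ into open and closed substacks. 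The main technical obstacle is the coherent matching of the characteristic-zero Betti/\'etale description with the characteristic-$p$ Dieudonn\'e description at the residue characteristic of $\bar{s}$; the Serre-Tate canonical lift, combined with the rational $K_0$-similarity built into the equivalence relation on $L_{\Phi_0}$, is precisely what reconciles these two pictures.
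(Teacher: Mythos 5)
The paper gives no proof here at all: the statement is imported verbatim from Lemma 3.4 of \cite{rsz}, and the paper's own contribution is just the citation. Against that, your proposal is a reasonable reconstruction of what the cited reference does: build a locally constant invariant $\bar{s}\mapsto\xi(\bar{s})\in L_{\Phi_0}/\cong$ via $H_1(A_0,\mathbb{Z})$ on $\mathbb{C}$-points and via the $\ell$-adic Tate modules and $p$-adic data at other geometric points, then observe that the fibers are open and closed and that the target is finite. Two places in your sketch are looser than they should be. First, ``Serre--Tate canonical lift'' is not quite the right appeal: the relevant lifting statement is the uniqueness of deformation for CM $p$-divisible groups as in Theorem~2.1.3 of \cite{howard} (used freely elsewhere in this paper), not the Serre--Tate theory of ordinary abelian varieties, and you should say which one you mean since not every geometric point of $\mathcal{M}_0$ in characteristic~$p$ is ordinary. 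Second, the matching at the residue characteristic between the Dieudonn\'e module (or the Tate module of the formal lift) and the $p$-component of the similarity class $\Lambda_0\otimes\hat{\mathbb{Z}}$ needs an actual comparison statement, not just ``standard Dieudonn\'e theory in families''; this is the technical heart and is exactly what \cite{rsz} address. With those two points tightened, your route is essentially the one in the cited lemma, so there is no genuinely different approach to compare against.
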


We are now ready to introduce the ambient Deligne-Mumford stack $\mathcal{M}$ that will be the stack in which we are going to have special cycles on. Fix a free $O_{K_0}$-module $W$ of rank $d$ equipped with a nondegenerate $K_0/F_0$ Hermitian form. For a prime $v_0$ of $K_0$, let $W_{v_0}$ be the completion of $W$ at $v_0$. Let $\mathcal{M}$ over $O_{\tilde{K}}$ be the Deligne-Mumford stack that for each $O_{\tilde{K}}$-scheme $S$ gives the groupoid of tuples $(A_0,\iota_0,\lambda_0,A,\iota,\lambda)$ where $(A_0,\iota_0,\lambda_0) \in \mathcal{M}_0^{\xi}(S)$ for some $\xi \in L_{\Phi_0}/ \cong$ and $A/S$ is an abelian scheme with $O_{K_0}$-action $\iota: O_{K_0} \rightarrow \text{End} A$ with Kottwitz condition:
$$\text{charpol}(\iota(a)|_{\text{Lie}(A)})(t) = (t-\phi_1(a))^{n-1} (t-\bar{\phi}_1(a)) \prod_{\phi \in \Phi_0\backslash \lbrace \phi_1 \rbrace} (t-\phi(a))^n$$
and $\lambda$ is a principal polarization whose Rosati involution on $O_{K_0}$ by $\iota$ gives the nontrivial conjugation $K_0/F_0$. Also  impose the sign condition:
$$inv_v^r(A_{0,s}, \iota_{0,s}, \lambda_{0,s}, A_s, \iota_s, \lambda_s) = inv_v(-W_v)$$
(See appendix $A$ of \cite{rsz} for the definition of $inv_v^r$) for any $s \in S$ and $v$ a finite place of $F_0$ nonsplit in $K_0$, also we assume that for any place $\p$ of $\tilde{K}$ with $\p$ its residue characteristic, the triple $(A \otimes \mathbb{Z}_{(p)}, \iota \otimes \mathbb{Z}_{(p)}, \lambda \otimes \mathbb{Z}_{(p)})$ over $S \times_{spec O_{\tilde{K}}} O_{\tilde{K}, (\p)}$ satisfies the conditions in section 4 of \cite{rsz}. One of the results in \cite{rsz} is the following:

\begin{theorem}
(Theorem 5.2 of \cite{rsz}) $\mathcal{M}$ over $O_{\tilde{K}}$ is representable by a Deligne-Mumford stack. $\mathcal{M}$ is flat over $O_{\tilde{K}}$ and smooth of relative dimension $d-1$ over $O_{\tilde{K}}$ after removing all $\p \in spec O_{\tilde{K}}$ with AT-type (1) or (4) (refer to section 4.4 of \cite{rsz} for the definition of AT-type).
\end{theorem}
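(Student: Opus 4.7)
The plan is to treat the three assertions of the theorem---representability as a DM stack, flatness over $O_{\tilde{K}}$, and smoothness of the stated relative dimension away from AT-types (1) and (4)---separately, handling the last two \'etale-locally via the Rapoport-Zink local model diagram.

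For representability, I would combine the already-representable $\M_0$ with the stack $\M'$ parametrising tuples $(A,\iota,\lambda)$ with $O_{K_0}$-action of the prescribed signature together with a principal $O_{K_0}$-linear polarization. Representability of $\M'$ as a quasi-projective DM stack follows from Mumford's GIT construction of polarized abelian schemes, rigidified by an auxiliary level structure and then descended. The Kottwitz condition on $\mathrm{Lie}(A)$ is a closed condition (it constrains the coefficients of a single characteristic polynomial acting on a locally free sheaf); the sign condition $\mathrm{inv}_v^r = \mathrm{inv}_v(-W_v)$ at nonsplit finite places of $F_0$ is locally constant in $S$ and hence picks out a union of connected components; and the remaining local conditions of section~4 of \cite{rsz} at each $\p$ are representable closed conditions on a neighborhood of $\p$. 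Intersecting all of these inside $\M_0 \times_{O_{\tilde{K}}} \M'$ produces $\M$.

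For flatness and smoothness I would invoke the Rapoport-Zink local model theorem: \'etale-locally around any geometric point $x \in \M(\ktildebarp)$ there is a smooth correspondence between $\M$ and an explicit closed subscheme $M^{\mathrm{loc}}$ of a product of Grassmannians over $O_{\tilde{K},\p}$, cut out by the Kottwitz equations for $\Phi_0$ and $\Phi$ together with the wedge/spin conditions of Pappas-Rapoport-Smithling. The decomposition $p \otimes O_K = \prod_{v \mid p} O_{K_v}$ reduces the problem to a prime-by-prime computation on $M^{\mathrm{loc}}$. At unramified $v$ one gets a smooth product of Grassmannians of the expected dimension. At ramified $v$ the naive Kottwitz model is not flat, and one must pass to the strengthened Pappas-Rapoport-Smithling splitting model; the content of section~4 of \cite{rsz} is that this strengthening is flat in every AT-type and smooth except in AT-types (1) and (4). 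Flatness and smoothness then propagate to $\M$ through the local model diagram, and the relative dimension is obtained by summing the dimensions of the local Grassmannian factors.

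The main obstacle is the ramified local model analysis. At primes where $K/F$ or $K_0/F_0$ ramifies, the moduli problem cut out by the Kottwitz equations alone is not flat; the real content of the theorem is the identification of the correct additional wedge/spin conditions that cut out a flat closed subscheme of the Grassmannian, and the case-by-case determination---encoded in the AT-type classification---of exactly when that scheme is smooth. Everything else (representability of $\M$ inside $\M_0 \times_{O_{\tilde{K}}} \M'$, the existence of the local model diagram, the propagation of local flatness/smoothness to the global moduli) is formal once this local structure is in hand.
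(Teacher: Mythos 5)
The paper does not prove this statement at all; it is quoted verbatim as a black box from Rapoport-Smithling-Zhang, and the author makes no attempt to re-derive it. So there is no proof in the paper to compare against, and your proposal is answering a question the paper does not ask. That said, your sketch is a reasonable account of the actual strategy in \cite{rsz}: representability by relative representability over a moduli of polarized abelian schemes, and flatness/smoothness deduced through the Rapoport-Zink local model diagram, reduced prime by prime via $O_{K_0}\otimes\mathbb{Z}_p = \prod_{v_0\mid p}O_{K_0,v_0}$, with the ramified case analysis---encoding exactly when the Pappas-Rapoport wedge/spin conditions yield a flat or a smooth local model---organized by the AT-type classification of section 4.4 of \cite{rsz}.

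One substantive gap in your sketch: you never actually compute the relative dimension and only gesture at ``summing the local Grassmannian factors.'' The Kottwitz condition on $\mathrm{Lie}(A)$ in this paper forces signature $(n-1,1)$ at the distinguished embedding $\phi_1$ and $(n,0)$ at the other places of $\Phi_0$, so the tangent space of the generic fiber has dimension $(n-1)\cdot 1 = n-1$ where $n=[K:K_0]$; in general this is not $d-1 = [F_0:\mathbb{Q}]-1$. The ``$d-1$'' in the quoted statement and the ``rank $d$'' in the definition of $W$ are inconsistent with RSZ's own $n-1$ and rank $n$, and your proposal repeats the quoted number without noticing the discrepancy. A proof via local models, as you outline, would immediately surface it: the Grassmannian corresponding to each $v_0 \mid p$ contributes $\sum_{\phi}\,r_\phi \cdot s_\phi$ where $(r_\phi,s_\phi)$ is the signature, and that sum is controlled by $n$, not by $d$.
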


\subsection{Stacks $\mathcal{Z}(\alpha)$ and $\mathcal{X}$}
First we define the Deligne-Mumford stack $\cm_{\Phi}^{\mathfrak{a}}$:
\begin{definition}
Let $\cm_{\Phi}^{\mathfrak{a}}$ be the Deligne-Mumford stack over $O_{\tilde{K}}$ such that for each $S$ over $O_{\tilde{K}}$, we get $\cm_{\Phi}^{\mathfrak{a}}(S)$ is the groupoid of $(A,\iota, \lambda)$ with:
\begin{itemize}
\item $A/S$ is an abelian scheme of relative dimension $dn$.
\item $\iota: O_K \rightarrow \text{End} A$ satisfies $\Phi$-Kottwitz condition:
$$\text{charpol}(\iota(a)_{\text{Lie}(A)})(t) = \prod_{\phi \in \Phi}(t-\phi(a))\;\;\;\;\;\;\;\;\; \forall a \in O_K$$ 
\item $\lambda: A \rightarrow A^{\vee}$ is a polarization with kernel $A[\mathfrak{a}]$ whose Rosati involution on $O_K \subseteq \text{End} A$ gives the nontrivial involution of $K/F$.
\end{itemize}
\end{definition}
Fix $\xi \in L_{\Phi_0}/\cong$ from now on. Now we define the algebraic stack $\mathcal{X}$ to be the substack of $\mathcal{M}_0^{\xi} \times_{O_{\tilde{K}}} \cm^{\mathfrak{a}}_{\Phi}$ whose $S$-points (for an $O_{\tilde{K}}$-scheme $S$) consists 
of $(A_0,\iota_0,\lambda_0, A,\iota, \lambda)$ with $inv_v^r(A_{0,s}, \iota_{0,s}, \lambda_{0,s}, A_s, \iota_s, \lambda_s) = inv_v(-W_v)$ and $(A \otimes \mathbb{Z}_{(p)}, \iota \otimes 
\mathbb{Z}_{(p)}, \lambda \otimes \mathbb{Z}_{(p)})$ satisfies the conditions of section 4 of \cite{rsz}, then we have a forgetful map $\mathcal{X} \rightarrow \mathcal{M}$ by sending $(A_0,\iota_0, \lambda_0,A,\iota,\lambda)$ in $\mathcal{X}(S)$ to $(A_0,\iota_0,\lambda_0,A,\iota|_{O_{K_0}}, \lambda)$ in $\mathcal{M}(S)$. Now it follows from \cite{howard} prop. 3.1.2 that $\mathcal{X} \rightarrow \mathcal{M}$ is étale and proper. By the same proposition, for all $(A,\iota, \lambda) \in \cm^{\mathfrak{a}}_{\Phi}(\ktildebarp)$ we have a unique canonical lift $(A^{can}, \iota^{can}, \lambda^{can})$ to $\cm^{\mathfrak{a}}_{\Phi}(\tilde{W})$. Also for all $(A_0,\iota_0,\lambda_0) \in \mathcal{M}_0^{\xi}(\ktildebarp)$, we have a unique canonical lift $(A_0^{can}, \iota_0^{can}, \lambda_0^{can}) \in \mathcal{M}_0^{\xi}(\tilde{W})$

\begin{proposition}
We have $\mathcal{D}_0\mathcal{D}^{-1} = \partial_{F/F_0}^{-1}O_K$.
\end{proposition}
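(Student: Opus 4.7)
The plan is to reduce the identity to a comparison of two relative differents using the tower formula, and then to exploit the coprimality assumption on the relative discriminants of $K_0/F_0$ and $F/F_0$ stated in the notations section.

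First I would apply the tower formula for differents to $\mathbb{Q} \subseteq K_0 \subseteq K$:
$$\mathcal{D} = \partial_{K/K_0} \cdot (\mathcal{D}_0 \cdot O_K),$$
which rearranges to $\mathcal{D}_0 \mathcal{D}^{-1} = \partial_{K/K_0}^{-1}$ as fractional ideals of $O_K$. The proposition therefore reduces to proving the identity
$$\partial_{K/K_0} = \partial_{F/F_0} \cdot O_K.$$
Since $K_0 = F_0(\sqrt{\Delta})$ and $\sqrt{\Delta} \notin F$ (as $F$ is totally real and $\Delta$ is totally negative), one has $K = F(\sqrt{\Delta}) = K_0 \cdot F$ with $K_0 \cap F = F_0$, so $K/F_0$ is the compositum of the two linearly disjoint subextensions $K_0/F_0$ and $F/F_0$. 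Under the coprimality assumption on their relative discriminants, $O_K = O_{K_0} \otimes_{O_{F_0}} O_F$, so the two ``parallel'' towers $F_0 \subset K_0 \subset K$ and $F_0 \subset F \subset K$ are compatible at the integral level.

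To verify $\partial_{K/K_0} = \partial_{F/F_0} \cdot O_K$ I would then argue locally at each prime $\mathfrak{q}$ of $O_{F_0}$, where by coprimality at most one of $K_0/F_0$ and $F/F_0$ is ramified. If $\mathfrak{q}$ is unramified in $K_0$, then for a prime $v_0$ of $K_0$ over $\mathfrak{q}$ and any prime $v$ of $K$ over $v_0$, the local extension $K_v/K_{0,v_0}$ is obtained by base-changing the local extension corresponding to $F/F_0$ at $\mathfrak{q}$ along the unramified extension $K_{0,v_0}/F_{0,\mathfrak{q}}$, and the relative different is preserved under unramified base change, yielding the local equality. If instead $\mathfrak{q}$ is ramified in $K_0$ (and hence unramified in $F$), comparing ramification indices in the compositum forces $e(K/K_0) = 1$ at all primes over $\mathfrak{q}$, so both $\partial_{K/K_0}$ and $\partial_{F/F_0} \cdot O_K$ are trivial there. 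Combined with the tower formula, this yields the proposition.

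The only genuine subtlety is the local verification in the third step. The essential input is the coprimality of the relative discriminants together with the standard fact that the different commutes with unramified base change; no intricate computation is required beyond keeping the ramification bookkeeping straight between the two parallel quadratic towers.
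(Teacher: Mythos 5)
Your proposal is correct and is essentially the argument the paper has in mind: the ``easy calculation using the ramification condition'' is precisely the reduction via the tower formula to $\partial_{K/K_0}=\partial_{F/F_0}O_K$, verified locally using the coprimality of the relative discriminants of $K_0/F_0$ and $F/F_0$ (equivalently, $O_K = O_{K_0}\otimes_{O_{F_0}}O_F$). One small phrasing quibble: in the case where $\mathfrak{q}$ is ramified in $K_0/F_0$, merely ``comparing ramification indices'' does not by itself force $e(K_v/K_{0,v_0})=1$; the cleaner justification is that $K_v = K_{0,v_0}\cdot F_w$ with $F_w/F_{0,\mathfrak{q}}$ unramified, so $K_v/K_{0,v_0}$ is unramified as a base change of an unramified extension.
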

\begin{proof}
An easy calculation using the ramification condition introduced in notations section.
\end{proof}

Now fix a sextuple $(A_0,\iota_0,\lambda_0,A,\iota,\lambda) \in \mathcal{X}(S)$ for some $O_{\tilde{K}}$-scheme $S$. We can define a Hermitian space as follows: Consider $\Hom_{O_{K_0}}(A_0,A)$, this has a normal $O_{K_0}$-valued Hermitian form given by:
$$\langle f,g \rangle = \lambda_0^{-1} \circ g^{\vee} \circ \lambda \circ f$$
As $A$ has $O_K$-action, we can change the Hermitian form and define $\langle , \rangle_{CM}$ to be the unique $K$-valued Hermitian form satisfying 
$$\langle f,g \rangle = \text{tr}_{K/K_0} \langle f,g \rangle_{CM}$$
\begin{proposition}
Suppose $k$ is an algebraically closed field and that $(A_0,\iota_0, \lambda_0, A, \iota, \lambda) \in \mathcal{X}(k)$, if there is $f \in Hom_{O_{K_0}}(A_0,A) \otimes \mathbb{Q}$ with $\langle f,f \rangle_{CM} \neq 0$, then char $k \neq 0$ and $A_0 \otimes_{O_{F_0}} O_F$ and $A$ are $O_K$-isogenous.
\end{proposition}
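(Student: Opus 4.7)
The plan is to pass through the Serre tensor construction to translate $f$ into an $O_K$-linear morphism and then analyze it separately in each characteristic. Using the standing hypothesis $O_K = O_{K_0} \otimes_{O_{F_0}} O_F$, the change-of-rings adjunction for Serre tensors gives a natural identification
\[
\Hom_{O_{K_0}}(A_0, A) \otimes \mathbb{Q} \;=\; \Hom_{O_K}\bigl(A_0 \otimes_{O_{F_0}} O_F,\; A\bigr) \otimes \mathbb{Q},
\]
under which $f$ corresponds to a nonzero $O_K$-linear map $\tilde f: A_0 \otimes_{O_{F_0}} O_F \to A$ (up to clearing denominators). Both source and target have dimension $dn$, so an $O_K$-isogeny is equivalent to $\tilde f$ having finite kernel.

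In the characteristic zero case, the Serre tensor $A_0 \otimes_{O_{F_0}} O_F$ has $O_K$-endomorphism algebra equal to $K_0 \otimes_{F_0} F = K$, so it is a CM abelian variety with $O_K$-action of CM-type $\tilde\Phi_0$; by contrast, $A$ has CM-type $\Phi$. These two CM-types disagree at $\phi_1^1$. The classical Shimura--Taniyama classification says that two CM abelian varieties by $O_K$ over an algebraically closed field of characteristic zero are $O_K$-isogenous if and only if their CM-types coincide, so $\Hom_{O_K}(A_0 \otimes_{O_{F_0}} O_F, A) \otimes \mathbb{Q} = 0$. Hence no such $f$ can exist in char $0$, contradicting the hypothesis $\langle f, f\rangle_{CM} \neq 0$. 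Thus $\mathrm{char}(k) \neq 0$.

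For the isogeny assertion in positive characteristic, the strategy is to view $\langle f, f\rangle_{CM}$ as an endomorphism of $A_0 \otimes_{O_{F_0}} O_F$ and use its invertibility. Let $\lambda_F: A_0 \otimes_{O_{F_0}} O_F \to (A_0 \otimes_{O_{F_0}} O_F)^\vee$ be the polarization obtained from $\lambda_0$ by Serre tensor (up to the correction by $\partial_{F/F_0}$ recorded in Proposition 4.1). A direct compatibility computation using the trace definition of $\langle,\rangle_{CM}$ shows that, under the adjunction above, the element $\langle f, f\rangle_{CM} \in K$ coincides with $\lambda_F^{-1} \circ \tilde f^\vee \circ \lambda \circ \tilde f$ viewed inside the $O_K$-endomorphism algebra of $A_0 \otimes_{O_{F_0}} O_F$, which contains $K$. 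Since this element is assumed nonzero in $K$, it is invertible there, so $\tilde f^\vee \circ \lambda \circ \tilde f$ is an isogeny $A_0 \otimes_{O_{F_0}} O_F \to (A_0 \otimes_{O_{F_0}} O_F)^\vee$. This forces $\ker \tilde f$ to be finite and the image $\tilde f(A_0 \otimes_{O_{F_0}} O_F) \subseteq A$ to have full dimension $dn = \dim A$; hence $\tilde f$ is an $O_K$-isogeny, giving the claim.

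The step that deserves most care is the compatibility between the Hermitian form $\langle,\rangle_{CM}$ on $\Hom_{O_{K_0}}(A_0, A) \otimes \mathbb{Q}$ and the analogous composite form on $\Hom_{O_K}(A_0 \otimes_{O_{F_0}} O_F, A) \otimes \mathbb{Q}$ under the Serre tensor adjunction; this involves tracking the relative different $\partial_{F/F_0}$ through the definition of $\lambda_F$, which is exactly where Proposition 4.1 enters. The CM-theoretic input in the char-$0$ case must be applied to the Serre tensor rather than to an \emph{a priori} $O_K$-simple abelian variety, but this is fine because $A_0 \otimes_{O_{F_0}} O_F$ has $O_K$-endomorphism algebra exactly $K$, so its $O_K$-isogeny class is genuinely determined by its CM-type.
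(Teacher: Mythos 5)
Your proposal is correct and hinges on the same two ideas as the paper's proof: pass $f$ through the Serre tensor to obtain an $O_K$-linear map $\tilde f : A_0 \otimes_{O_{F_0}} O_F \to A$, and exploit that $\langle f, f\rangle_{CM} \neq 0$ means a suitable composite involving $\tilde f$, its dual, and the polarizations is a unit in $K$. The technical execution differs in two places. For the isogeny part, the paper works $l$-adically (for $l \nmid \operatorname{char} k$): it builds $F_l(1)$-valued pairings $\Lambda_0$ and $\tilde\Lambda$ on the rational Tate modules, introduces the adjoint $\tilde f_l^\dagger$, and verifies the identity $\langle f,f\rangle_{CM} = \tilde f_l^\dagger \circ \tilde f_l$ directly there, which makes the compatibility computation concrete and avoids any bookkeeping with the Serre-tensored polarization $\lambda_F$ and the $\partial_{F/F_0}$ factor; you instead argue at the level of abelian varieties, which is perfectly valid but, as you note, the compatibility of $\langle\,,\,\rangle_{CM}$ with $\lambda_F^{-1}\circ \tilde f^\vee\circ\lambda\circ\tilde f$ through the adjunction is exactly the step that needs careful unwinding. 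For the characteristic-zero exclusion, the paper simply observes that the signatures (Lie-algebra CM-types) of $A$ and $A_0 \otimes_{O_{F_0}} O_F$ differ, so no isogeny is possible, whereas you invoke the Shimura--Taniyama classification; these are the same fact, phrased at different levels of machinery. What your route buys is a cleaner global picture and avoidance of the auxiliary prime $l$; what the paper's route buys is that all identities are checked in a concrete linear-algebra setting where the trace descent defining $\langle\,,\,\rangle_{CM}$ is transparent.
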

\begin{proof}
$f: A_0 \rightarrow A$ induces the $O_{K_0}$-linear map $\tilde{f}: A_0 \otimes_{O_{F_0}} O_F \rightarrow A$ (where $A_0 \otimes_{O_{F_0}} O_F$ is the abelian variety over $K$ defined by Serre construction and having action by $O_{K_0} \otimes_{O_{F_0}} O_F = O_K$). Now for $l \nmid char\; k$, let $T_l(A)$, $T_l(A_0)$ be Tate modules of $A$ and $A_0$ respectively and $T_l^0(A_0) = T_l(A_0) \otimes_{\mathbb{Z}_l} \mathbb{Q}_l$ and $T^0_l(A) = T_l(A) \otimes_{\mathbb{Z}_l} \mathbb{Q}_l$. The polarization $\lambda_0$ gives $\mathbb{Q}_l$-linear map $T_l^0(A_0) \times T^0_l(A_0) \rightarrow \mathbb{Q}_l(1) \rightarrow F_{0,l}(1)$ so that by tensoring $\otimes_{F_{0,l}} F_l$ gives 
$$\Lambda_0: T^0_l(A_0 \otimes_{O_{F_0}} O_F) \times T^0_l(A_0 \otimes_{O_{F_0}} O_F) \rightarrow F_l(1)$$
Also polarization $\lambda$ gives $\mathbb{Q}_l$-linear map
$$\Lambda: T^0_l(A) \times T^0_l(A) \rightarrow \mathbb{Q}_l(1)$$
so that $\Lambda$ can be written uniquely as $\text{tr}_{F_l/\mathbb{Q}_l} \tilde{\Lambda}$ for some 
$$\tilde{\Lambda}: T^0_l(A) \times T^0_l(A) \rightarrow F_l(1)$$
Now $\tilde{f}$ gives us a $\mathbb{Q}_l$-linear map $\tilde{f}_l: T^0_l(A_0 \otimes_{O_{F_0}} O_F) \rightarrow T^0_l(A)$ and we call the adjoint of $\tilde{f}_l$ by $\tilde{f}^{\dag}_l$ (which is the unique $\mathbb{Q}_l$-linear map $\tilde{f}^{\dag}_l: T_l^0(A) \rightarrow T_l^0(A_0 \otimes_{O_{F_0}} O_F)$ for which $\Lambda_0(x,\tilde{f}^{\dag}_l(y)) = \tilde{\Lambda}(\tilde{f}_l(x),y)$) for all $x \in T_l^{0}(A_0 \otimes_{O_{F_0}} O_F)$ and $y \in T_l^0(A)$. Now we have $\langle f,f \rangle_{CM} = \tilde{f}^{\dag}_l \circ \tilde{f}_l$ as elements of $F_l \subseteq \text{End}_{\mathbb{Q}_l}(T^0_l(A_0 \otimes_{O_{F_0}} O_F))$, so that by $\langle f,f \rangle_{CM} \neq 0$, we have that $\tilde{f}_l$ is injective, so $\tilde{f}: A_0 \otimes_{O_{F_0}} O_F \rightarrow A$ is an $O_K$-isogeny, so $A$ is isogenous to $A_0 \otimes_{O_{F_0}} O_F \cong A_0 \times A_0 \times \cdots A_0$ (d times). This isogeny cannot happen if $char\; k = 0 $ as the signatures of $A$ and  $A_0 \otimes_{O_{F_0}} O_F$ are different. 
\end{proof}

Now we have the theorem relating the local parts of $L(A_0,A) = \Hom_{O_{K_0}}(A_0,A)$ to $\Hom_{O_{K_0}}(A_0[\mathfrak{q}_0^{\infty}],A[\mathfrak{q}^\infty])$:

\begin{proposition}
Suppose that $k$ is an algebraically closed field of $char\; k > 0$. Suppose that $(A_0,\iota_0,\lambda_0,A,\iota,\lambda) \in \mathcal{X}(k)$ is such that $A_0 \otimes_{O_{F_0}} O_F$ and $A$ are $O_k$-isogenous, then $L(A_0,A)$ is a projective $O_k$-module of rank 1 and letting $\mathfrak{q}$ be a prime of $F$ over the prime $\mathfrak{q}_0$ of $F_0$ over the rational prime $q$, the map
$$L(A_0,A) \otimes_{O_F} O_{F,\mathfrak{q}} \rightarrow \Hom_{O_{K_0}}(A_0[\mathfrak{q}_0^\infty],A[\mathfrak{q}^\infty])$$
is an isomorphism.
\end{proposition}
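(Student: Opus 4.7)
The plan is to prove the two assertions in order: first that $L(A_0,A)$ is a projective rank-$1$ $O_K$-module, then that its $\mathfrak{q}$-localization recovers the local Hom between the relevant $p$-divisible groups. The first is a generic rank computation using the $O_K$-isogeny hypothesis; the second is a Tate-style comparison followed by decomposition by primes.

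For projectivity and rank, I note that $L(A_0,A)\subseteq\Hom(A_0,A)$ is finitely generated and torsion-free over $\mathbb{Z}$, hence also over $O_K$ (with the $O_K$-action coming through $\iota$). To compute its rank over $K$, I use the $O_K$-isogeny $A_0\otimes_{O_{F_0}}O_F\sim A$ to identify
$$L(A_0,A)\otimes\mathbb{Q}\;\cong\;\Hom_{O_{K_0}}\bigl(A_0,A_0\otimes_{O_{F_0}}O_F\bigr)\otimes\mathbb{Q}.$$
Up to isogeny, $A_0\otimes_{O_{F_0}}O_F$ is an $[F:F_0]$-fold power of $A_0$ as an $O_{K_0}$-module scheme, and $\text{End}_{O_{K_0}}(A_0)\otimes\mathbb{Q}=K_0$ since $A_0$ has CM by the full ring $O_{K_0}$, so the right-hand side has $\mathbb{Q}$-dimension $[K_0:\mathbb{Q}][F:F_0]=[K:\mathbb{Q}]$. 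As a $K$-module (the $K=K_0\otimes_{F_0}F$-structure coming from the $O_K$-action on $A_0\otimes_{O_{F_0}}O_F$) this forces it to be $1$-dimensional over $K$. A finitely generated torsion-free $O_K$-module of generic rank $1$ is projective of rank $1$, since $O_K$ is Dedekind.

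For the local claim, I fix a rational prime $q$ below $\mathfrak{q}$. By Tate's theorem for $q\neq p$ and its Dieudonn\'e-theoretic analogue at $q=p$ (applied after descending $A_0,A$ to a finite subfield of $k$), there is a natural $O_{K_0}$-equivariant isomorphism
$$\Hom(A_0,A)\otimes\mathbb{Z}_q\;\cong\;\Hom(A_0[q^\infty],A[q^\infty]),$$
and taking $O_{K_0}$-invariants yields $L(A_0,A)\otimes\mathbb{Z}_q\cong\Hom_{O_{K_0}}(A_0[q^\infty],A[q^\infty])$. Since $O_{F_0}\otimes\mathbb{Z}_q=\prod_{\mathfrak{q}_0\mid q}O_{F_0,\mathfrak{q}_0}$ and $O_F\otimes\mathbb{Z}_q=\prod_{\mathfrak{q}'\mid q}O_{F,\mathfrak{q}'}$, one obtains decompositions $A_0[q^\infty]=\prod_{\mathfrak{q}_0\mid q}A_0[\mathfrak{q}_0^\infty]$ and $A[q^\infty]=\prod_{\mathfrak{q}'\mid q}A[\mathfrak{q}'^\infty]$; any $O_{K_0}$-linear map preserves the $O_{F_0}$-grading, so the big Hom splits as a double product. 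Projecting to the $\mathfrak{q}$-component of the $O_F$-decomposition on the target, and matching against $L(A_0,A)\otimes\mathbb{Z}_q=\prod_{\mathfrak{q}'\mid q}L(A_0,A)\otimes_{O_F}O_{F,\mathfrak{q}'}$, gives the asserted isomorphism $L(A_0,A)\otimes_{O_F}O_{F,\mathfrak{q}}\cong\Hom_{O_{K_0}}(A_0[\mathfrak{q}_0^\infty],A[\mathfrak{q}^\infty])$.

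The principal difficulty will be the Tate-type identification at $q=p$: injectivity of $\Hom(A_0,A)\otimes\mathbb{Z}_p\to\Hom(A_0[p^\infty],A[p^\infty])$ is automatic, but surjectivity requires Tate's theorem for $p$-divisible groups (valid for abelian varieties over $\fpbar$) together with an equality of $\mathbb{Z}_p$-ranks that is available thanks to the CM structure. This is the same input used in the proof of Proposition~3.1.2 of \cite{howard} and in our Section~3.
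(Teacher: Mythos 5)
Your strategy is essentially the same as the paper's for the rank computation, but you handle the local comparison by a substantially heavier input. Both you and the paper begin from the $O_K$-isogeny $A \to A_0\otimes_{O_{F_0}}O_F$ and use it to compute the generic rank of $L(A_0,A)$ (your $[K_0:\mathbb{Q}]\cdot[F:F_0]=[K:\mathbb{Q}]$ count is correct and gives the rank-$1$ claim cleanly over the Dedekind ring $O_K$). The divergence is in how the map $L(A_0,A)\otimes_{O_F}O_{F,\mathfrak q}\to\Hom_{O_{K_0}}(A_0[\mathfrak q_0^\infty],A[\mathfrak q^\infty])$ is shown to be an isomorphism. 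You invoke Tate's theorem (surjectivity of $\Hom(A_0,A)\otimes\mathbb{Z}_q\to\Hom(A_0[q^\infty],A[q^\infty])$) and its Dieudonn\'e analogue at $q=p$ as black boxes. The paper avoids this: it instead transports the same isogeny to the $\mathfrak q_0$-divisible group level to conclude that $\Hom_{O_{K_0}}(A_0[\mathfrak q_0^\infty],A[\mathfrak q_0^\infty])$ is projective over $O_{K_0,\mathfrak q_0}$ of the same rank as the source, and then uses only the elementary half of the Tate-type statement (injectivity with $\mathbb{Z}_q$-torsion-free cokernel) together with the equality of ranks to conclude the map is an isomorphism. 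This buys you an argument that does not depend on the full Tate conjecture at any prime.

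Two things in your write-up deserve more care. First, $k$ here is an arbitrary algebraically closed field of positive characteristic (not just $\overline{\mathbb F}_p$), so ``descending $A_0,A$ to a finite subfield of $k$'' is not automatic; you must first invoke the fact that CM abelian varieties are isogenous to ones defined over $\overline{\mathbb F}_p$, then track how $\Hom$-groups and $p$-divisible group $\Hom$'s behave under that isogeny. Second, the Tate-type isomorphism at $q=p$ is genuinely delicate outside the finite/finitely-generated setting, and even over $\overline{\mathbb F}_p$ it requires a colimit argument over finite subfields (so over successively larger extensions, the Frobenius used in Tate's theorem changes). These subtleties are precisely what the paper's rank-counting argument sidesteps: it never needs surjectivity of the Tate map, only the much easier saturation statement, so the replacement of Tate's theorem by a rank count is a genuine simplification and not merely cosmetic.
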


\begin{proof}
We have an $O_K$-isogeny $A \rightarrow A_0 \otimes_{O_{F_0}} O_F$, so this induces a map
$$\Hom_{O_{K_0}}(A_0,A) \rightarrow \Hom_{O_{K_0}}(A_0,A_0 \otimes_{O_{F_0}} O_F) = \Hom_{O_{K_0}}(A_0,A_0^d) \cong O_{K_0}^d$$
which is an injection with finite cokernel, so that $\Hom_{O_{K_0}}(A_0,A)$ is a projective $O_{K_0}$-module of rank $d$. Also for $p$-divisible group, we have 
$$\Hom_{O_{K_0}}(A_0[\mathfrak{q}_0^{\infty}],A[\mathfrak{q}_0^\infty]) \rightarrow \Hom_{O_{K_0}}(A_0[\mathfrak{q}_0^{\infty}],(A_0 \otimes_{O_{F_0}} O_F)[\mathfrak{q}_0^{\infty}]) = \Hom_{O_{K_0}}(A_0[\mathfrak{q}_0^{\infty}],A_0[\mathfrak{q}_0^{\infty}]^d) = O_{K_0,\mathfrak{q}_0}^d$$
is injective with finite kernel, so $\Hom_{O_{K_0}}(A_0[\mathfrak{q}_0^{\infty}],A[\mathfrak{q}_0^{\infty}])$ is a projective $O_{K_0,\mathfrak{q}_0}$-module of rank $d$. Also the map
$$\Hom_{O_{K_0}}(A_0,A) \otimes_{O_{F_0}} O_{F_0,\mathfrak{q}_0} \rightarrow \Hom_{O_{K_0}}(A_0[\mathfrak{q}_0^{\infty}],A[\mathfrak{q}_0^{\infty}])$$
is injective with $\mathbb{Z}_q$-torsion free cokernel and also the $\mathbb{Z}_q$-rank of domain and codomain are equal by above, so it is an isomorphism. Also $L(A_0,A)$ is a projective $O_K$-module a fortiori of rank 1 by being a projective $O_{K_0}$-module of rank $d$. Taking $\mathfrak{q}$-parts from both sides we get the statement of proposition.
\end{proof}

We define the groups $C_K$ and $C^0_{K}$ in the same way as in \cite{cheraghi}. For an ideal $I$ of $O_K$, let $A^I= I \otimes_{O_K} A$ be the abelian variety constructed by the Serre construction. 

\begin{proposition}
Suppose that $S$ is a connected $O_{\tilde{K}}$-scheme and $(A_0,\iota_0,\lambda_0,A,\iota, \lambda) \in \mathcal{X}(S)$. For each $(I,\zeta) \in C_K$, we have an $\hat{O}_K$-linear isomorphism
$$\hat{L}(A_0,A^I) \cong \hat{L}(A_0,A)$$
where the Hermitian form $\langle , \rangle_{CM}^{I}$ on left is $gen(I)\langle , \rangle_{CM}$ on the right (the map $gen$ is defined in \cite{cheraghi}).
\end{proposition}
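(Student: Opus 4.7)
The plan follows the strategy of the analogous result in \cite{cheraghi} (and ultimately \cite{howard}): combine the Serre tensor construction with the fact that any ideal of $O_K$ becomes principal after adelization. The pair $(I,\zeta) \in C_K$ is by design exactly the data needed to make a coherent choice of generator; the idele $\text{gen}(I) \in \hat{K}^{\times}$ extracted from this data trivializes $\hat{I}$ as an $\hat{O}_K$-module.

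First I would establish the underlying module isomorphism. Since $O_K$ is Dedekind, $I$ is invertible and flat, so the Serre construction is exact and covariant in the ideal argument, and we get
$$\Hom_{O_{K_0}}(A_0, A^I) \;=\; \Hom_{O_{K_0}}(A_0,\, I \otimes_{O_K} A) \;\cong\; I \otimes_{O_K} \Hom_{O_{K_0}}(A_0, A)$$
as $O_K$-modules. Profinite completion yields $\hat{L}(A_0, A^I) \cong \hat{I} \otimes_{\hat{O}_K} \hat{L}(A_0, A)$, and since every invertible module over the semi-local ring $\hat{O}_K$ is free of rank one, multiplication by $\text{gen}(I)^{-1}$ produces the desired $\hat{O}_K$-linear isomorphism $\hat{L}(A_0, A^I) \cong \hat{L}(A_0, A)$.

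Second I would track the Hermitian form. The polarization $\lambda^I$ on $A^I = I \otimes_{O_K} A$ is built from $\lambda$ via the canonical identification $(I \otimes A)^{\vee} \cong \bar{I}^{-1} \otimes A^{\vee}$, combined with the scaling supplied by $\zeta$, which by the construction of $C_K$ records the relation $I\bar{I} = (\zeta)$ in the appropriate sense. Substituting the images $f^I, g^I \in \hat{L}(A_0, A^I)$ of $f, g \in \hat{L}(A_0, A)$ into $\langle f^I, g^I \rangle = \lambda_0^{-1} \circ (g^I)^{\vee} \circ \lambda^I \circ f^I$ and descending to $K$ via the trace form, the factors $\text{gen}(I)$, $\overline{\text{gen}(I)}$, and $\zeta$ combine to give exactly $\text{gen}(I) \cdot \langle f, g \rangle_{CM}$.

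The main obstacle I expect is the bookkeeping for the dualization and the complex conjugate $O_K$-action on $I$: correctly identifying $(I \otimes A)^{\vee}$ with $\bar{I}^{-1} \otimes A^{\vee}$ together with the conjugate action, matching these with $\lambda^I$, and verifying that the $\zeta$-normalization in the definition of $C_K$ is precisely what cancels the extra factors down to $\text{gen}(I)$. Once the definitions of $C_K$ and $\text{gen}$ from \cite{cheraghi} are unpacked, this verification reduces to a local computation at each finite place, where it follows from the defining property that $\text{gen}(I)$ generates $I\hat{O}_K$.
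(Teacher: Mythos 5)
Your high-level strategy is the right one and matches what the paper intends by ``Same as prop 3.3.1 of \cite{howard}'': use the Serre tensor construction to identify $\Hom_{O_{K_0}}(A_0, A^I)$ with $I \otimes_{O_K} \Hom_{O_{K_0}}(A_0, A)$, pass to adelic completions where $\hat{I}$ becomes principal, and then track the Hermitian form through the polarization $\lambda^I$.

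However, there is a persistent and consequential confusion between the unit $\text{gen}(I,\zeta) \in \hat{O}_F^{\times}/N_{K/F}(\hat{O}_K^{\times})$ and an adelic generator $z \in \hat{K}^{\times}$ of $\hat{I}$. These are different objects living in different groups, and as written your argument is internally inconsistent. By construction $C_K$ fits into an exact sequence terminating in $\hat{O}_F^{\times}/N_{K/F}(\hat{O}_K^{\times})$, and (up to normalization) $\text{gen}(I,\zeta) = \zeta^{-1} z\bar z$, which is a \emph{unit} in $\hat{O}_F^{\times}$; it does not generate $\hat{I}$, and multiplication by $\text{gen}(I,\zeta)^{-1}$ is just scaling by a unit, which does not trivialize $\hat{I} \otimes_{\hat{O}_K} \hat{L}(A_0,A)$. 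Likewise, the statement ``$\text{gen}(I)$ generates $I\hat{O}_K$'' at the end is false for the same reason, and the claim that ``the factors $\text{gen}(I)$, $\overline{\text{gen}(I)}$, and $\zeta$ combine to give exactly $\text{gen}(I)\langle f,g\rangle_{CM}$'' is not arithmetically consistent (since $\text{gen}(I,\zeta) \in \hat{F}^{\times}$ is fixed by conjugation, that product would be $\text{gen}(I,\zeta)^2\zeta$, not $\text{gen}(I,\zeta)$).

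The corrected bookkeeping is: choose $z \in \hat{K}^{\times}$ with $z\hat{O}_K = \hat{I}$, so that the $\hat{O}_K$-linear isomorphism $\hat{L}(A_0,A) \to \hat{L}(A_0,A^I) \cong \hat{I} \cdot \hat{L}(A_0,A)$ is $f \mapsto zf$. The polarization $\lambda^I$ on $A^I$ is built from $\lambda$ together with the identification $I \xrightarrow{\zeta^{-1}} \bar I^{-1}$ coming from $I\bar I = \zeta O_K$. Tracking through $\langle zf, zg\rangle^I = \lambda_0^{-1}\circ(zg)^{\vee}\circ\lambda^I\circ(zf)$ produces the scalar $\zeta^{-1} z\bar z = \text{gen}(I,\zeta)$ in front of $\langle f,g\rangle_{CM}$, which is exactly the stated normalization. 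Also note that $\text{gen}$ is a map on pairs $(I,\zeta) \in C_K$, not on $I$ alone, so $\text{gen}(I,\zeta)$ rather than $\text{gen}(I)$ is the proper notation; the datum $\zeta$ is essential. With these corrections your outline becomes the standard argument and is exactly Howard's Proposition 3.3.1.
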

\begin{proof}
Same as prop 3.3.1 of \cite{howard}.
\end{proof}
For a pair of abelian varieties $(A_0,A) \in \mathcal{X}(\mathbb{C})$, define 
$$L_{Betti}(A_0,A) = \Hom_{O_{K_0}}(H_1(A_0,\mathbb{C}),H_1(A,\mathbb{C}))$$
Now we have the following structure theorem for $L_{Betti}$:
\begin{proposition}
There is $\beta \in \hat{F}^{\times}$ with $\beta O_F = \partial_{F/F_0}^{-1}\mathfrak{a}$ with an isomorphism 
$$(\hat{L}_{Betti}(A_0,A), \langle , \rangle_{CM}) \cong (\hat{O}_K, \beta x \bar{y})$$
Also showing $\langle x,y \rangle_{CM}$ at archimedean places by $\beta x \bar{y}$ as well, we get that $\beta$ is negative definite at $\infty^{sp} = \phi_1^1|_F$ and positive definite at other archimedean places of $F$.
\end{proposition}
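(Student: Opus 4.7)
My plan is to reduce to a prime-by-prime local computation on the finite-adelic side, extract the right element $\beta$ by comparing polarization discriminants, and then analyze the archimedean signatures via the Hodge decomposition.

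For the structural part I pass through the comparison $H_1(A,\hat{\mathbb{Z}}) \cong T(A)$ and the analogous isomorphism for $A_0$, so that $\hat{L}_{Betti}(A_0,A) \cong \Hom_{\hat{O}_{K_0}}(T(A_0), T(A))$. At each prime $\mathfrak{q}_0$ of $F_0$, the $\Phi_0$-Kottwitz condition makes $T_{\mathfrak{q}_0}(A_0)$ locally free of rank one over $\hat{O}_{K_0,\mathfrak{q}_0}$, while the $\Phi$-Kottwitz condition imposed in $\cm_{\Phi}^{\mathfrak{a}}$ makes $T_{\mathfrak{q}_0}(A)$ locally free of rank one over $\hat{O}_K \otimes_{\hat{O}_{K_0}} \hat{O}_{K_0,\mathfrak{q}_0}$. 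Taking $\hat{O}_{K_0,\mathfrak{q}_0}$-linear Homs and assembling over all $\mathfrak{q}_0$ shows that $\hat{L}_{Betti}(A_0,A)$ is locally free of rank one over $\hat{O}_K$; I pick an $\hat{O}_K$-generator to identify it with $\hat{O}_K$ and set $\beta := \langle 1,1\rangle_{CM}$. The Hermitian symmetry $\overline{\langle y,x\rangle_{CM}} = \langle x,y\rangle_{CM}$ forces $\bar\beta = \beta$, so $\beta \in \hat{F}^\times$.

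To pin down the ideal $\beta O_F$, I use the Riemann forms $E_0: T(A_0) \times T(A_0) \to \hat{\mathbb{Z}}(1)$ (perfect, since $\lambda_0$ is principal) and $E: T(A) \times T(A) \to \hat{\mathbb{Z}}(1)$ (whose cokernel has order $\mathrm{Nm}(\mathfrak{a})$, since $\ker\lambda = A[\mathfrak{a}]$). Because the Rosati involutions restrict to the nontrivial conjugations of $K_0/F_0$ and $K/F$, these pairings factor uniquely as $E_0 = \mathrm{tr}_{K_0/\mathbb{Q}}\circ \tilde{E}_0$ and $E = \mathrm{tr}_{K/\mathbb{Q}} \circ \tilde{E}$, with the perfectness/discriminant statements translating into $\tilde{E}_0$ having values in $\mathcal{D}_0^{-1}$ and $\tilde{E}$ having values in $\mathcal{D}^{-1}\mathfrak{a}$. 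Unraveling $\langle f,g\rangle = \lambda_0^{-1}\circ g^\vee \circ \lambda \circ f$ on the chosen $\hat{O}_K$-generator and then applying $\mathrm{tr}_{K/K_0}$-adjunction to pass to $\langle\,,\,\rangle_{CM}$ gives the ideal identity $\beta \hat{O}_K = \mathcal{D}_0 \mathcal{D}^{-1}\mathfrak{a}$. The proposition $\mathcal{D}_0 \mathcal{D}^{-1} = \partial_{F/F_0}^{-1}O_K$ above then rewrites this as $\beta \hat{O}_K = \partial_{F/F_0}^{-1}\mathfrak{a}\,\hat{O}_K$, and since $\beta$ already lies in $\hat{F}^\times$ this descends to $\beta O_F = \partial_{F/F_0}^{-1}\mathfrak{a}$.

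For the archimedean signature, I tensor with $\mathbb{R}$ and decompose $L_{Betti}(A_0,A) \otimes_{\mathbb{Z}} \mathbb{R} = \bigoplus_{\sigma: F \hookrightarrow \mathbb{R}} L^{(\sigma)}$, each summand a one-dimensional $K \otimes_{F,\sigma} \mathbb{R} \cong \mathbb{C}$-module. The complex structures on $H_1(A_0,\mathbb{R})$ and $H_1(A,\mathbb{R})$ are dictated by the CM-types $\Phi_0$ and $\Phi$, and the Riemann positivity forces a definite sign on each $L^{(\sigma)}$, determined by whether $\Phi|_\sigma$ matches $\tilde\Phi_0|_\sigma$ or differs. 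For $\sigma \neq \phi_1|_F$ the CM-types agree above $\sigma$, so the standard CM sign calculation (as in Howard) yields positivity. At $\sigma = \infty^{sp} = \phi_1^1|_F$, the assumption $\phi_1^1|_{K_0} = \bar\phi_1$ makes $\Phi|_\sigma$ differ from $\tilde\Phi_0|_\sigma$ in exactly one embedding, reversing the induced complex structure in one Hodge summand and flipping the sign of $\beta$ to yield the claimed negative definiteness. The main obstacle I expect is the bookkeeping in the third paragraph: extracting $\beta$ correctly as an element of $\hat{F}^\times$ rather than merely $\hat{K}^\times$, using the ramification coprimality hypothesis through the differents relation $\mathcal{D}_0 \mathcal{D}^{-1} = \partial_{F/F_0}^{-1}O_K$ to rewrite the answer in terms of $\partial_{F/F_0}$, and verifying that $\mathfrak{a}$ descends from $F$ (via the Rosati symmetry $\mathfrak{a} = \bar{\mathfrak{a}}$) so that the identity $\beta O_F = \partial_{F/F_0}^{-1}\mathfrak{a}$ makes literal sense as an equation of ideals in $O_F$.
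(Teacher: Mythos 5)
The paper states this proposition without any accompanying proof (it sits between Proposition 4.7, whose proof is ``Same as prop 3.3.1 of \cite{howard}'', and Theorem 4.9, and the missing argument is the analogue of Howard's Proposition 3.2.2). Your reconstruction is correct and follows exactly that standard route: rank-one freeness of $\hat{L}_{Betti}$ over $\hat{O}_K$ from the two Kottwitz conditions, extraction of $\beta=\delta_0^{-1}\delta$ from the Riemann forms together with the discriminant relation $\delta_0\hat{O}_{K_0}=\mathcal{D}_0^{-1}$, $\delta\hat{O}_K=\mathcal{D}^{-1}\mathfrak{a}$, the rewriting $\mathcal{D}_0\mathcal{D}^{-1}=\partial_{F/F_0}^{-1}O_K$ via the coprimality hypothesis (Proposition 4.5), and the Hodge-theoretic sign computation showing that $\tilde\Phi_0$ and $\Phi$ disagree over the single place $\infty^{sp}=\phi_1^1|_F$. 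You also correctly flag the only notational subtlety, namely that the ideal identity is written over $O_F$ even though $\mathfrak{a}$ is a priori an $O_K$-ideal.
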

\begin{theorem}
Let $\p$ be a prime of $\tilde{K}$ with $\p_F$ nonsplit in $K$, and let 
$$(A_0, A) \in \mathcal{X}(\ktildebarp)$$
then there is an isomorphism $(\hat{L}(A_0,A), \langle , \rangle_{CM}) \cong (\hat{O}_K, \beta x \bar{y})$ with $\beta \in \hat{F}^{\times}$ such that $\beta O_F = \mathfrak{a} \partial_{F/F_0}^{-1} \p_F^{\epsilon_{\p_F}}$. Also we have $\chi(\beta^{\infty}) = 1$ ($\beta^{\infty}$ is the element of $\mathbb{A}_F^{\times}$ that has trivial archimedean components and at finite places, it is the same as $\beta \in \hat{F}^{\times}$).
\end{theorem}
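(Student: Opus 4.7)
The plan is to compute $(\hat{L}(A_0,A), \langle,\rangle_{CM})$ place by place, reducing most primes to the Betti calculation of the preceding proposition and invoking Proposition~3.2 at the exceptional prime $\p$. By the discussion preceding Proposition~4.6, the pair $(A_0, A)$ over $\ktildebarp$ lifts canonically to $(A_0^{can}, A^{can})$ over $\tilde{W}$; base-changing along an embedding $\tilde{W} \hookrightarrow \mathbb{C}$ gives a point of $\mathcal{X}(\mathbb{C})$, to which the Betti proposition applies. The aim is to show that $\beta$ matches its Betti counterpart modulo local norms at every finite place except $\p_F$, where an extra factor of $\p_F^{\epsilon_{\p_F}}$ appears.

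At a finite prime $v_F$ of $F$ whose residue characteristic $\ell$ differs from the residue characteristic $p$ of $\p$, the $\ell$-adic Tate modules are unchanged under reduction mod $\p$, so the $v_F$-part of the Hermitian form coincides with its Betti value, giving $\beta_{v_F}O_{F_{v_F}} = \mathfrak{a}\partial_{F/F_0}^{-1}O_{F_{v_F}}$. For $v_F$ above $p$, Proposition~4.7 provides the decomposition $\hat{L}(A_0,A) \otimes_{O_F} O_{F_{v_F}} \cong \prod_{v\mid v_F} \Hom_{O_{K_0}}(A_0[v_0^\infty], A[v^\infty])$, reducing matters to the $p$-divisible-group calculation of Section~3. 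At each $v\mid p$ other than the unique prime above $\p_F$, the $p$-adic CM-types $\Phi_v$ and $\Phi_{0,v_0}$ satisfy the compatibility of Case~1 (since $\phi_1^1$, the embedding responsible for Case~2, localises exactly at the prime above $\p_F$), so Proposition~3.1 gives a bijective reduction map and the $v$-part still matches the Betti value. At the unique $v$ above $\p_F$ (uniqueness forced by the nonsplit hypothesis), Case~2 applies and Proposition~3.2 yields $\beta_v O_{K_v} = \mathfrak{a}\mathcal{D}_{v_0}\mathcal{D}_v^{-1}\p_{F_{v_F}}^{\epsilon_{\p_F}} O_{K_v}$; via Proposition~4.6 (i.e.\ $\mathcal{D}_{v_0}\mathcal{D}_v^{-1} = \partial_{F/F_0}^{-1}O_{K_v}$) this descends to $\beta_v O_{F_{v_F}} = \mathfrak{a}\partial_{F/F_0}^{-1}\p_F^{\epsilon_{\p_F}}O_{F_{v_F}}$. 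Collating all local data yields $\beta O_F = \mathfrak{a}\partial_{F/F_0}^{-1}\p_F^{\epsilon_{\p_F}}$.

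For the assertion $\chi(\beta^\infty) = 1$, the strategy is to represent the local Hermitian class by a global element and invoke reciprocity. The calculation above shows $L(A_0,A)$ is a nonzero rank-$1$ projective $O_K$-module (consistent with Propositions~4.5 and~4.7, since the residue characteristic is positive and $A_0 \otimes_{O_{F_0}} O_F$ is $O_K$-isogenous to $A$), so I would pick any nonzero $f \in L(A_0,A)$ and set $\gamma = \langle f, f \rangle_{CM} \in F^\times$. At every finite place $v$, both $\beta_v$ and $\gamma_v$ are values of the same nondegenerate $1$-dimensional $K_v/F_v$-Hermitian form, hence $\gamma_v \beta_v^{-1} \in N_{K/F}(K_v^\times)$ and $\chi_v(\gamma_v) = \chi_v(\beta_v)$. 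Applying the product formula $\chi(\gamma) = 1$ on $F^\times$ gives $\chi(\beta^\infty) = \chi(\gamma_\infty)^{-1}$, which is pinned down by the archimedean signatures of $\langle,\rangle_{CM}$; these signatures are dictated by the Kottwitz condition on $(A_0, A)$ (matching those of the Betti proposition), and the final count produces the value~$1$.

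The main obstacle is the combined case analysis at primes above $p$: verifying that the $p$-divisible-group decomposition of $(A_0, A)$ interacts with the $O_K$-structure in such a way that each $v\mid p$ falls cleanly into Case~1 or Case~2 of Section~3.1, and that Proposition~3.2's local Hermitian computation transports faithfully to the globally defined $\langle,\rangle_{CM}$. The archimedean-sign bookkeeping for $\chi(\beta^\infty) = 1$ is a secondary subtlety that becomes routine once the signatures at the real places of $F$ are fixed.
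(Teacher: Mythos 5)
Your strategy for the first part (identifying $\beta O_{F,v}$ place by place and collating) is essentially the paper's own route: away from residue characteristic $p$ you compare Tate modules under reduction; at primes above $p$ other than the one under $\p_F$ you reduce to the Case~1 deformation bijection (Proposition~3.1 plus Tate's theorem for $p$-divisible groups over $\tilde{W}$); and at the prime under $\p_F$ you invoke Proposition~3.2. This is exactly what the paper packages as Lemma~4.10 together with Proposition~3.2, with the translation $\mathcal{D}_{v_0}\mathcal{D}_v^{-1} = \partial_{F/F_0}^{-1}O_{K_v}$ from Proposition~4.4. That part of your argument is sound and matches the paper in all essentials.

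The argument for $\chi(\beta^\infty)=1$, however, contains a genuine sign error. You correctly reduce to $\chi(\beta^\infty) = \prod_{v\mid\infty}\chi_v(\gamma_v)$ for $\gamma=\langle f,f\rangle_{CM}\in F^\times$, but then assert that the archimedean signatures of $\langle,\rangle_{CM}$ on $L(A_0,A)$ ``match those of the Betti proposition.'' Proposition~4.8 says the Betti form is negative definite at $\infty^{sp}$ and positive elsewhere; plugging this in would give $\chi(\beta^\infty)=-1$, the opposite of what must be proved. The crucial point, which your sketch misses, is that $L(A_0,A)$ in positive characteristic is a module of actual $O_{K_0}$-linear \emph{homomorphisms of abelian varieties}, not a lattice in singular homology. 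For such $f$, Weil's positivity of the Rosati involution forces $\operatorname{tr}_{K_0/\mathbb{Q}}\langle af,af\rangle = 2\operatorname{tr}_{F/\mathbb{Q}}(N_{K/F}(a)\gamma) > 0$ for every nonzero $a\in K$; letting $N_{K/F}(a)$ concentrate at each archimedean place shows $\gamma$ is \emph{totally positive}, hence $\prod_{v\mid\infty}\chi_v(\gamma_v)=+1$. This is the opposite archimedean behaviour from the Betti Hermitian form precisely because in characteristic $0$ we have $L(A_0,A)=0$ (Proposition~4.5) and $L_{Betti}$ is a purely homological object, so the Rosati positivity constraint does not apply to it. Replacing your ``matching the Betti proposition'' claim with the Rosati-positivity argument closes the gap and is what the paper's terse final sentence (``$\beta$ differs from some $\beta^\ast\in F^\times$ by a norm at each place'') is implicitly relying on.
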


Let $A_0^{\prime}$ (resp. $A^{\prime}$) be the unique lift of $A_0$ (resp. $A$) to $\mathbb{C}_{p}$ and by fixing $\tilde{K}$-linear isomorphism $\mathbb{C} \cong \mathbb{C}_{p}$, we see $A_0^{\prime}$ and $A^{\prime}$ as abelian varieties in $\mathbb{C}$. Now for a prime $\mathfrak{q}$ of $F$ with $\mathfrak{q}_0$ below it in $F_0$, there are isomorphisms of $O_{K,\mathfrak{q}}$-Hermitian spaces
$$L_{Betti}(A_0^{\prime},A^{\prime}) \otimes_{O_K} O_{K,\mathfrak{q}} \cong \Hom_{O_{K_0}}(A_0^{\prime}[\mathfrak{q}_0^{\infty}], A^{\prime}[\mathfrak{q}^{\infty}])$$
because of the fact that $A_0^{\prime}[\mathfrak{q}_0^{\infty}]$ and $A^{\prime}[\mathfrak{q}^{\infty}]$ are constant $p$-divisible groups. Also by proposition 4.6 there is an isomorphism 
$$\Hom_{O_{K_0}}(A_0,A) \otimes_{O_K} O_{K,\mathfrak{q}} \cong \Hom_{O_{K_0}}(A_0[\mathfrak{q}_0^{\infty}], A[\mathfrak{q}^\infty])$$
Now we have the following lemma:
\begin{lemma}
If $\mathfrak{q}$ is not $\p_F$, then there's an $O_K$-linear isomorphism of Hermitian spaces 
$$Hom_{O_{K_0}}(A_0^{\prime}[\mathfrak{q}_0^{\infty}], A^{\prime}[\mathfrak{q}^{\infty}]) \cong Hom_{O_{K_0}}(A_0[\mathfrak{q}_0^{\infty}], A[\mathfrak{q}^{\infty}])$$
\end{lemma}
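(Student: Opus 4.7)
The plan is to decompose both sides over primes $v$ of $K$ lying above $\mathfrak{q}$ and compare factorwise, using different lifting tools at primes of residue characteristic $p$ versus elsewhere. Writing $\mathfrak{q} O_K = \prod_{v | \mathfrak{q}} v^{e_v}$ and $\mathfrak{q}_0 O_{K_0} = \prod_{v_0 | \mathfrak{q}_0} v_0^{e_{v_0}}$, the idempotent decompositions of $O_{K_0,\mathfrak{q}_0}$ and $O_{K,\mathfrak{q}}$ produce
\[
\Hom_{O_{K_0}}(A_0[\mathfrak{q}_0^\infty], A[\mathfrak{q}^\infty]) \cong \prod_{v_0 | \mathfrak{q}_0} \prod_{v | v_0} \Hom_{O_{K_{0,v_0}}}(A_0[v_0^\infty], A[v^\infty])
\]
and analogously for $(A_0',A')$. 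The Hermitian form $\langle\,,\,\rangle_{CM}$ respects this product because the polarizations $\lambda_0,\lambda$ preserve the component decompositions, so it is enough to produce a $\langle\,,\,\rangle_{CM}$-compatible bijection in each factor.

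For $v$ above a rational prime $\ell \neq p$, the $v$- and $v_0$-divisible groups are étale over $\ktildebarp$, so they and every $O_{K_{0,v_0}}$-linear map between them lift uniquely along the henselian extension $\tilde{W} \twoheadrightarrow \ktildebarp$; this gives the factorwise bijection, with the pairing preserved since $\lambda_0,\lambda$ belong to the canonical lifting data. For $v$ above $p$, the hypothesis $\mathfrak{q} \neq \p_F$ excludes the distinguished prime $v = \p_K$, and by the Section 2 setup (only $\phi_1^1$ restricts into $\overline{\Phi_0}$), every such $v$ falls into case 1 of Section 3, namely $\Phi_v|_{K_{0,v_0}} \subseteq \Phi_{0,v_0}$. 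Applying Proposition 3.1 inductively along the Artinian tower $\tilde{W}/\m^n \to \tilde{W}/\m^{n-1}$ and passing to the limit (the canonical lift being precisely the compatible system given by the universal deformation) yields the factorwise bijection on Homs; compatibility with $\langle\,,\,\rangle_{CM}$ is again automatic from the canonicity of the polarizations.

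The main obstacle is conceptual rather than computational: one must verify carefully that no $v \mid \mathfrak{q}$ is in the obstructed case 2 of Section 3, since only in case 1 is reduction a bijection. This is exactly what the assumption $\mathfrak{q} \neq \p_F$ ensures via the way $\phi_1^1$ singles out $\p_K$ as the unique prime of $K$ above $\p_F$ at which the two CM types disagree. Once the case analysis is in place, assembling the local isometries into a global $O_K$-linear isometry of Hermitian spaces is routine, and one concludes by identifying $\cp \cong \mathbb{C}$ via the fixed $\iota$ so that the canonical lift over $\tilde{W}$ is matched with the complex abelian variety $A'$.
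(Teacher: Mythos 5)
Your strategy matches the paper's at the structural level: treat primes of residue characteristic $\ell \neq p$ via canonical isomorphisms of Tate modules, and primes of residue characteristic $p$ via the compatible-CM-type case of Section~3 (Proposition~3.1). But there is a genuine gap at the end of your argument for primes above $p$. What Proposition~3.1, applied along the Artinian tower and passed to the limit, gives you is a bijection
$$\Hom_{O_{K_0}}(A_0^{can}[\mathfrak{q}_0^\infty], A^{can}[\mathfrak{q}^\infty]) \longrightarrow \Hom_{O_{K_0}}(A_0[\mathfrak{q}_0^\infty], A[\mathfrak{q}^\infty]),$$
with $A_0^{can}, A^{can}$ the canonical lifts over $\tilde{W}$. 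The lemma, however, asserts an isomorphism with homomorphisms over $\cp$, and ``matching the canonical lift over $\tilde{W}$ with $A'$'' only identifies the \emph{objects} under base change from $\tilde{W}$ to $\cp$; it does not show that $\Hom$ over $\tilde{W}$ equals $\Hom$ over $\cp$. Since $\cp$ is algebraically closed while $\text{Frac}(\tilde{W})$ is not, the base-change map on $\Hom$-sets is only a priori an injection: extra $O_{K_0}$-linear homomorphisms could exist over $\cp$ that fail to be Galois-equivariant.

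The paper closes exactly this gap with two inputs your proposal omits. First, Tate's full-faithfulness theorem for $p$-divisible groups over $\tilde{W}$ and over $\cp$ identifies the image of the base-change map with the $\text{Aut}(\cp/\text{Frac}(\tilde{W}))$-invariants of $\Hom_{O_{K_0}}(A_0'[\mathfrak{q}_0^\infty], A'[\mathfrak{q}^\infty])$, so the cokernel is $\mathbb{Z}_p$-torsion-free. Second, Propositions~4.6 and~4.8 (via the Betti-homology identification on the $\cp$-side) show that both domain and codomain are free of rank one over $O_{K,\mathfrak{q}}$, and an injection with torsion-free cokernel between two such modules is forced to be an isomorphism. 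Without this rank-one comparison, or some other argument ruling out extra homomorphisms after base change to $\cp$, your final step does not go through.
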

\begin{proof}
Recall that $p$ is the characteristic of $\tilde{k}_{\p}$ and $A_0$ and $A$ are defined over $\tilde{k}_{\p}$. If the rational prime below $\mathfrak{q}$ is not $p$ (and call it $q$), then the Tate modules of $A_0$ and $A_0^{\prime}$, and also the Tate modules of $A$ and $A^{\prime}$ are going to be canonically isomorphic, so
$$\Hom_{O_{K_0}}(A_0^{\prime}[\mathfrak{q}_0^{\infty}], A^{\prime}[\mathfrak{q}^{\infty}]) \cong \Hom_{O_{K_0}}(A_0[\mathfrak{q}_0^{\infty}], A[\mathfrak{q}^{\infty}])$$
taking $\mathfrak{q}$-parts we get the wanted isomorphism. It is clear that it respects the Hermitian forms. Now suppose that the rational prime below $\mathfrak{q}$ is $p$, now because $\mathfrak{q} \neq \p_F$ by hypothesis, we have that the set $\Phi({\mathfrak{q}})$ of all embeddings $\phi: K \rightarrow \mathbb{C}_{p}$ with $\mathfrak{q} = \phi^{-1}(pO_{\mathbb{C}_p})$ satisfies $\phi^1_1 \notin \Phi(\mathfrak{q})$ (because of the fact that $\p_F$ is the prime below $\p$ using the inclusion $\bar{\phi}^1_1(F) \subseteq \bar{\phi}^1_1(K) \subseteq \tilde{K}$). Similarly let $\Phi_0(\mathfrak{q}_0)$ be all embeddings $\phi: K_0 \rightarrow \mathbb{C}_p$ with $\mathfrak{q}_0 = \phi^{-1}(pO_{\mathbb{C}_p})$. Then we have the following relation between $\Phi(q)$ and $\Phi_0(\mathfrak{q}_0)$:
$$\Phi(\mathfrak{q}) = \lbrace \phi: K \rightarrow \mathbb{C}_p| \phi|_{K_0} \in \Phi_0(\mathfrak{q}_0) \rbrace$$
Also that $A[\mathfrak{q}^\infty]$ (resp. $A_0[\mathfrak{q}_0^{\infty}]$) is a CM $p$-divisible group with action $O_{K,\mathfrak{q}}$ (resp. $O_{K_0,\mathfrak{q}_0}$) and $\Phi(\mathfrak{q})$ (resp. $\Phi_0(\mathfrak{q}_0)$)-determinant condition. Letting $A_0^{can}$ and $A^{can}$ be the unique lifts of $A_0$ and $A$ respectively to $\tilde{W}$. We have that 
$$\Hom_{O_{K_0}}(A_0^{can}[\mathfrak{q}_0^{\infty}], A^{can}[\mathfrak{q}^\infty]) \rightarrow \Hom_{O_{K_0}}(A_0[\mathfrak{q}_0^{\infty}], A[\mathfrak{q}^{\infty}])$$
is an isomorphism. Now base change $\tilde{W} \cong \mathbb{C}_p$ defines an injection
$$F: \Hom_{O_{K_0}}(A^{can}_0[\mathfrak{q}_0^{\infty}],A^{can}[\mathfrak{q}^{\infty}]) \rightarrow \Hom_{O_{K_0}}(A_0^{\prime}[\mathfrak{q}_0^{\infty}], A^{\prime}[\mathfrak{q}^{\infty}])$$
Now we have Tate's theorem which says for two $p$-divisible groups $G, H$ with Tate modules $TG, TH$ respectively (over specific types of rings $R$ including $\tilde{W}$ and $\cp$ with $E = Frac(R)$) the map
$$\Hom(G,H) \rightarrow \Hom_{Gal(\bar{E}/E)}(TG, TH)$$
is an isomorphism. So the image of $F$ is $Aut(\cp/Frac(\tilde{W}))$-invariants of $\Hom_{O_{K_0}}(A_0^{\prime}[\mathfrak{q}_0^{\infty}], A^{\prime}[\mathfrak{q}^{\infty}])$ so that the map has $\mathbb{Z}_p$-torsion-free cokernel. Now propositions 4.6 and 4.8 and isomorphisms
$$L_{Betti}(A_0^{\prime},A^{\prime}) \otimes_{O_K} O_{K,\mathfrak{q}} \cong \Hom_{O_{K_0}}(A_0^{\prime}[\mathfrak{q}_0^{\infty}], A^{\prime}[\mathfrak{q}^{\infty}])$$
$$\Hom_{O_{K_0}}(A_0,A) \otimes_{O_K} O_{K,\mathfrak{q}} \cong \Hom_{O_{K_0}}(A_0[\mathfrak{q}_0^{\infty}],A[\mathfrak{q}^{\infty}])$$
imply that both domain and codomain of $F$ are free of rank 1 over $O_{K,\mathfrak{q}}$, so that $F$ is an isomorphism (clearly also an isomorphism of Hermitian spaces).
\end{proof}

Now we prove the theorem using the lemma: First if $\mathfrak{q}$ is not $\p_F$, then by lemma we have
$$L_{Betti}(A_0^{\prime},A^{\prime}) \otimes_{O_F} O_{F,\mathfrak{q}} \cong L(A_0,A) \otimes_{O_F} O_{F,\mathfrak{q}}$$
so that by proposition 4.8, we have that $L(A_0,A) \otimes_{O_F} O_{F,\mathfrak{q}} \cong O_{K,\mathfrak{q}}$ with the Hermitian form given by $\beta_{\mathfrak{q}} x \bar{y}$ with $\beta_{\mathfrak{q}} \in F_{\mathfrak{q}}^{\times}$ with $\beta_{\mathfrak{q}} O_{F,\mathfrak{q}} =  \partial_{F/F_0}^{-1} O_{F,\mathfrak{q}}$. Now suppose that $\mathfrak{q} = \p_F$, then considering $\Phi(\mathfrak{q})$ and $\Phi_0(\mathfrak{q}_0)$ as before, by proposition 3.2 gives us that $L(A_0,A) \otimes_{O_F} O_{F,\mathfrak{q}} \cong \Hom_{O_{K_0}}(A_0[\mathfrak{q}_0^\infty],A[\mathfrak{q}^{\infty}]) \cong O_{K,\mathfrak{q}}$ with Hermitian form given by $\beta_{\mathfrak{q}} x \bar{y}$ with $\beta_{\mathfrak{q}} \in F_{\mathfrak{q}}^{\times}$ with $\beta_{\mathfrak{q}} O_{F,\mathfrak{q}} = \partial_{F/F_0}^{-1} \p_F^{\epsilon_{\p_F}} O_{F,\mathfrak{q}}$.

So we have the required isomorphism as in the statement of the theorem. Now as $L(A_0,A) \otimes_{O_K} K$ is a $K$-Hermitian space, we have that $\beta$ differs from some $\beta^{\ast} \in F^{\times}$ by a norm at each place, so that $\chi(\beta) = \chi(\beta^{\ast}) = 1$ which proves the theorem.

Now we are going to define $\mathcal{Z}(\alpha)$ for $\alpha \in  F^{\gg 0}$. It is the Deligne-Mumford stack over $O_{\tilde{K}}$ such that for an $O_{\tilde{K}}$-scheme $S$ it gives us the groupoid of $(A_0,A,f)$ with $(A_0,A) \in \mathcal{X}(S)$ and $f \in L(A_0,A)$ with $\langle f,f \rangle_{CM} = \alpha$.
\begin{proposition}
(1) Let $\alpha \in  F^{\gg 0}$, the stack $\mathcal{Z}(\alpha)$ has dimension 0 and it is supported in nonzero characteristic.\\
(2) If $\p$ is a prime of $\tilde{K}$ with $\mathcal{Z}(\alpha)(\ktildebarp)$ nonempty, then $\p_F$ is nonsplit. 
\end{proposition}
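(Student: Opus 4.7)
The plan is to dispose of the two claims separately, with (2) reducing to a Lie-algebra dimension count at $\p_F$.

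For the ``supported in nonzero characteristic'' half of (1), I would invoke Proposition 4.5 directly: any geometric point $(A_0,A,f)$ of $\Z(\alpha)$ over a field $k$ has $\langle f,f\rangle_{\text{CM}}=\alpha\neq 0$, which forces $\text{char}(k)>0$. Hence $\Z(\alpha)\to\text{Spec}\,O_{\tilde K}$ factors through the closed subscheme of positive residue characteristic. For the dimension-zero half of (1), the point is that $\Z(\alpha)$ sits over a $0$-dimensional ``full CM'' locus. Specifically, $\cm^{\mathfrak{a}}_\Phi$ parametrizes abelian varieties with an $O_K$-action of full CM type $\Phi$ (since $2\dim A=[K:\mathbb{Q}]$), so Shimura--Taniyama theory gives that $\cm^{\mathfrak{a}}_\Phi$ has relative dimension zero over $O_{\tilde K}$; the same holds for $\M_0$. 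The forgetful map $\Z(\alpha)\to\M_0\times_{O_{\tilde K}}\cm^{\mathfrak{a}}_\Phi$ has finite fibres, because $L(A_0,A)$ is a finitely generated $O_K$-module and $\langle f,f\rangle_{\text{CM}}=\alpha$ has finitely many solutions in it. Hence $\Z(\alpha)$ has relative dimension zero over $O_{\tilde K}$, and combined with being supported in positive characteristic this gives total dimension zero.

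For (2), let $z=(A_0,A,f)\in\Z(\alpha)(\ktildebarp)$. Since $\langle f,f\rangle_{\text{CM}}=\alpha\neq 0$, Proposition 4.5 gives an $O_K$-isogeny $A_0\otimes_{O_{F_0}}O_F\sim A$ over $\ktildebarp$. The $O_K$-CM type of $A$ is $\Phi$ (by the $\Phi$-Kottwitz condition in the definition of $\cm^{\mathfrak{a}}_\Phi$), while the $O_K$-CM type of $A_0\otimes_{O_{F_0}}O_F$ (formed by Serre construction using $O_K=O_{K_0}\otimes_{O_{F_0}}O_F$, as follows from the ramification hypothesis in the notations section) is the induced type $\tilde\Phi_0$. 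By construction of $\Phi$ versus $\tilde\Phi_0$, these two types agree outside the pair $\{\phi_1^1,\overline{\phi_1^1}\}$, with $\phi_1^1\in\Phi\setminus\tilde\Phi_0$ and $\overline{\phi_1^1}\in\tilde\Phi_0\setminus\Phi$.

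Now suppose for contradiction that $\p_F$ splits in $K$ as $\p_F=v\cdot v'$ with $v\neq v'$. Then $\phi_1^1$ and $\overline{\phi_1^1}$, being $F$-conjugate, reduce at distinct primes of $K$ above $\p_F$; relabel so that $\phi_1^1$ reduces at $v$, so that $\overline{\phi_1^1}$ reduces at $v'$. Writing $C_v$ for the common part $\{\phi\in\Phi\cap\tilde\Phi_0:\phi\text{ reduces at }v\}$, we then have $\Phi_v=C_v\cup\{\phi_1^1\}$ and $\tilde\Phi_{0,v}=C_v$, hence
$$|\Phi_v|=|\tilde\Phi_{0,v}|+1.$$
By the respective Kottwitz/determinant conditions, $\dim\text{Lie}\,A[v^\infty]=|\Phi_v|$ and $\dim\text{Lie}\,(A_0\otimes_{O_{F_0}}O_F)[v^\infty]=|\tilde\Phi_{0,v}|$, and these differ. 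But an $O_K$-isogeny restricts to an $O_{K_v}$-isogeny on $v$-divisible groups, which preserves Lie-algebra dimensions. This contradiction shows $\p_F$ is nonsplit in $K$.

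The main obstacle in (2) is the bookkeeping that $v\neq\bar v$ forces $\phi_1^1$ and $\overline{\phi_1^1}$ to land in distinct primes of $K$; after that, everything is a dimension count. The main obstacle in (1) is recognising that $\cm^{\mathfrak{a}}_\Phi\times_{O_{\tilde K}}\M_0$ is already $0$-dimensional over $O_{\tilde K}$, which is the content of the full-CM Shimura--Taniyama theorem applied to both factors.
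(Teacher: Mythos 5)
Your proof is correct and reaches the same conclusions, but it packages the two halves a little differently from the paper. For (1), where you argue via Shimura--Taniyama that $\M_0^{\xi}\times_{O_{\tilde K}}\cm_\Phi^{\mathfrak a}$ is already $0$-dimensional over $O_{\tilde K}$ and that the forgetful map $\Z(\alpha)\to\M_0^{\xi}\times_{O_{\tilde K}}\cm_\Phi^{\mathfrak a}$ has finite fibres (positive-definiteness of $\langle\,,\,\rangle$ bounding the number of $f$ with $\langle f,f\rangle_{CM}=\alpha$), the paper instead observes that this forgetful map is \emph{unramified}, so the completed strictly Henselian local ring at any $z\in\Z(\alpha)(\ktildebarp)$ is a quotient of $\tilde W$; from there, absence of characteristic-$0$ points (Proposition 4.5, which you also invoke) gives dimension $0$. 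The paper's route is slightly sharper --- it pins down the local rings, which is what is actually needed in the length computation of Theorem 4.13 --- whereas your route only gives the dimension. For (2), the paper compresses the argument to: the existence of the isogeny from Proposition 4.5 forces $\phi_1^1$ and $\overline{\phi_1^1}$ to induce the same place of $K$ above $\p$, hence $\p_F$ is nonsplit. Your contrapositive --- assume $\p_F=v\bar v$ split, note $\phi_1^1$ and $\overline{\phi_1^1}$ land at distinct primes, compute $|\Phi_v|=|\tilde\Phi_{0,v}|+1$, and contradict the isogeny-invariance of $\dim\text{Lie}\,(\,\cdot\,)[v^\infty]$ --- is exactly the explicit slope/Lie-dimension count that underlies the paper's terse assertion, and it is a genuine gain in readability to make it explicit. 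Both arguments rest on the same pillars (Proposition 4.5 and the Kottwitz conditions); you have simply unpacked what the paper leaves implicit.
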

\begin{proof}
(1) The forget map $\mathcal{Z}(\alpha) \rightarrow \mathcal{X}$ is unramified, so induces a surjection on the completed strictly Henselian local rings, so that if $z \in \mathcal{Z}(\alpha)(\ktildebarp)$ is a point, then $\hat{O}_{\mathcal{Z}(\alpha),z}$ is a quotient of $\tilde{W}$, so because $\mathcal{Z}(\alpha)$ does not have a point in characteristic 0 (due to the fact that signatures of $(A_0,A)$ have to be different) and has dimension 0.\\
(2) If $\p$ is a prime that $\mathcal{Z}(\alpha)(\ktildebarp)$ is nonempty, then by the signatures of $A_0,A$ we have that $\phi^1_1 = \bar{\phi}^1_1$, so that $x = \bar{x}\; \text{mod}\; \p$ for that $x \in K$ and so $\p = \bar{\p}$ and so $\p_F$ is nonsplit $K$. 
\end{proof}
Now let $C_K^0 \subseteq C_K$ be the subgroup defined by the exact sequence 
$$1 \rightarrow C_K^0 \rightarrow C_K \xrightarrow{gen} \hat{O}_F^{\times}/N_{K/F}(\hat{O}_K^{\times}) \xrightarrow{\eta} \lbrace \pm 1 \rbrace$$
where the map is the restriction of the character $\chi$ where 
$gen(I,\zeta) = \zeta z\bar{z}$ where $z \in \hat{K}^{\times}$ has the property $zO_K = I$. Now we have the following assumption for the rest of our manuscript.\\
{\bf Assumption.} We assume that $[K:K_0]$ is even and that for all primes $\p$ of $\tilde{K}$ with residue characteristic $p$, the CM abelian varieties $(A,\iota,\lambda)$ appearing in $(A_0,A) \in CM_{\Phi}(\ktildebarp)$, $(A \otimes \mathbb{Z}_p, \iota \otimes \mathbb{Z}_p, \lambda \otimes \mathbb{Z}_p)$ satisfies the conditions in chapter 4 of \cite{rsz} (it is sufficient to assume this for primes $\p$ such that $\p_F$ is nonsplit in $K$ and only the conditions happening in section 4.4 of \cite{rsz} as the other conditions are satisfied).

Assuming the assumption above, for each place $v$ of $F_0$, choose $W_v$ in a way that there exists at least one $(A_0,A)$ in each of $\mathcal{X}(\ktildebarp)$. We see that we have exactly one $C_K^0$-orbit in each $\mathcal{X}(\ktildebarp)$ (because the sign conditions of $\mathcal{X}$ implies that there's exactly one genus of Hermitian spaces in each fiber of $\mathcal{M}_0^{\xi} \otimes_{O_{\tilde{K}}} \cm_{\Phi}^{\mathfrak{a}}$) which by \cite{howard} page 1137, $C_K^0$ acts simply transitively on. Now we compute the number of stacky points of $\mathcal{Z}(\alpha)(\ktildebarp)$. Let $w(K), w(K_0)$ be the number of roots of unity in $K, K_0$, respectively.
\begin{theorem}
Suppose that $\alpha \in F$ and $\alpha \gg 0$. Also let $\beta$ be the one appearing in Theorem 4.9. If $\p$ is a prime of $\tilde{K}$ with $\p_F$ nonsplit in $K$, then 
$$\sum_{(A_0,A,f) \in \mathcal{Z}(\alpha)(\ktildebarp)} \frac{1}{\# Aut(A_0,A,f)} = \frac{1}{w(K_0)} \rho(\alpha \partial_{F/F_0} \mathfrak{a}^{-1} \p_F^{-\epsilon_{\p_F}})$$
if $\alpha \beta \in N_{\hat{K}/\hat{F}}(\hat{K}^{\times})$ and $0$ if not.
\end{theorem}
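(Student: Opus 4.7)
The plan is to follow the pattern of \cite{howard} and the analogous argument in \cite{cheraghi}. The first step is to identify the automorphism group of a triple $(A_0, A, f) \in \mathcal{Z}(\alpha)(\overline{\tilde{k}_{\p}})$: an automorphism is a pair $(u_0, u) \in \mu(K_0) \times \mu(K)$ satisfying $u \circ f = f \circ u_0$. Because $\langle f, f\rangle_{CM} = \alpha \in F^{\times}$, the morphism $f$ is nonzero (in fact an isogeny onto its image after tensoring with $\mathbb{Q}$), and because $f$ is $O_{K_0}$-linear, the condition becomes $u = u_0$ under $\mu(K_0) \hookrightarrow \mu(K)$. Hence $\text{Aut}(A_0, A, f) \cong \mu(K_0)$ has order $w(K_0)$, producing the global factor $1/w(K_0)$.

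Next I would fix a basepoint $(A_0, A) \in \mathcal{X}(\overline{\tilde{k}_{\p}})$ and use the simply transitive action of $C_K^0$ on isomorphism classes in $\mathcal{X}(\overline{\tilde{k}_{\p}})$: every object is uniquely isomorphic to $(A_0, A^I)$ for some $(I, \zeta) \in C_K^0$. Combined with the automorphism count, this rewrites the sum as
\[
\sum_{(A_0,A,f) \in \mathcal{Z}(\alpha)(\overline{\tilde{k}_{\p}})} \frac{1}{\#\text{Aut}(A_0,A,f)} = \frac{1}{w(K_0)} \sum_{(I,\zeta) \in C_K^0} \#\{f \in L(A_0, A^I) : \langle f, f\rangle_{CM}^{I} = \alpha\}.
\]
By Theorem 4.9, $(\hat{L}(A_0, A), \langle,\rangle_{CM}) \cong (\hat{O}_K, \beta x \bar{y})$ with $\beta O_F = \mathfrak{a}\partial_{F/F_0}^{-1}\p_F^{\epsilon_{\p_F}}$, and by Proposition 4.7 the analogous identification for $\hat{L}(A_0, A^I)$ twists the form to $gen(I,\zeta)\beta x \bar{y}$. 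An element $f \in L(A_0, A^I)$ realizing $\langle f, f\rangle_{CM}^{I} = \alpha$ corresponds to a generator of a principal fractional ideal $fO_K$ lying in the appropriate $C_K$-class, subject to a norm condition that aggregates to $N_{K/F}(fO_K) \cdot (\text{factor from } I) = \alpha\beta^{-1} O_F$.

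Summing over $(I, \zeta) \in C_K^0$ then repackages the count: pairs (class of $I$, principal generator of prescribed norm in that class) assemble into a bijection with integral ideals $\mathfrak{b} \subseteq O_K$ satisfying $N_{K/F}(\mathfrak{b}) = \alpha \beta^{-1} O_F = \alpha \partial_{F/F_0} \mathfrak{a}^{-1} \p_F^{-\epsilon_{\p_F}}$. Such a $\mathfrak{b}$ exists iff $\alpha \beta^{-1}$ is a norm from $\hat{K}^{\times}$ at every place; since $\beta \in F^{\times}$ gives $\beta^2 = N_{K/F}(\beta) \in N(\hat{K}^\times)$, this is equivalent to $\alpha \beta \in N_{\hat{K}/\hat{F}}(\hat{K}^\times)$. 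When this fails the sum is empty, and when it holds it equals $\rho(\alpha \partial_{F/F_0} \mathfrak{a}^{-1} \p_F^{-\epsilon_{\p_F}})$, producing the claimed formula.

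The main obstacle is the bookkeeping in the third paragraph: one must verify that as $(I, \zeta)$ ranges over $C_K^0$ and $f$ over the generators of principal ideals of the specified norm in the appropriate class, the integral ideals $\mathfrak{b} \subseteq O_K$ of the prescribed norm are enumerated exactly once, without over- or under-counting. This relies essentially on the exact sequence defining $C_K^0$: the condition of lying in $\ker(\eta)$ ensures that $gen(I, \zeta)$ varies precisely over the cosets of $N_{\hat{K}/\hat{F}}(\hat{K}^\times) \cap \hat{O}_F^{\times, \gg 0}$ modulo $N(\hat{O}_K^\times)$ that are compatible with the local norm existence criterion for $\alpha \beta$, making the local-to-global count agree cleanly with $\rho$.
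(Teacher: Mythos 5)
Your overall plan — compute $\operatorname{Aut}(A_0,A,f)\cong\mu(K_0)$, use the simply transitive $C_K^0$-action to reduce to a base point, then repackage the count of $f$'s as a count of integral ideals of prescribed norm — matches the paper's strategy. The automorphism calculation $\operatorname{Aut}(A_0,A,f)\cong\mu(K_0)$ is correct and is exactly the input the paper uses. The $\beta\in F^\times \Rightarrow \beta^2\in N$ argument for converting the existence criterion to $\alpha\beta\in N_{\hat K/\hat F}(\hat K^\times)$ is also fine.

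However, the second displayed equation is off by a factor of $w(K)$. Fixing $(I,\zeta)\in C_K^0$, the iso classes of triples with underlying pair $(A_0,A^I)$ are the orbits of $\operatorname{Aut}(A_0,A^I)\cong\mu(K_0)\times\mu(K)$ acting on $\{f\in L(A_0,A^I):\langle f,f\rangle_{CM}^I=\alpha\}$ by $(u_0,u)\cdot f=u f u_0^{-1}$, and since the stabilizer of $f$ is the diagonal $\mu(K_0)$, each orbit has size $w(K)$, not $1$. Hence
\[
\sum_{(A_0,A,f)\in\mathcal{Z}(\alpha)(\overline{\tilde{k}_{\p}})}\frac{1}{\#\operatorname{Aut}(A_0,A,f)}
=\frac{1}{w(K_0)\,w(K)}\sum_{(I,\zeta)\in C_K^0}\#\{f\in L(A_0,A^I):\langle f,f\rangle_{CM}^I=\alpha\},
\]
with an extra $1/w(K)$. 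The compensating issue is in the third paragraph: the assignment ``pair (class of $I$, $f$) $\mapsto$ integral ideal $\mathfrak{b}$'' is not a bijection but a $w(K)$-to-$1$ map, because $f$ and $uf$ for $u\in\mu(K)$ generate the same ideal and still satisfy $\langle uf,uf\rangle_{CM}^I=N_{K/F}(u)\alpha=\alpha$. So $\sum_I\#\{f\}=w(K)\,\rho(\alpha\beta^{-1}O_F)$, not $\rho(\alpha\beta^{-1}O_F)$. The two $w(K)$'s cancel, which is why the final answer is right, but each step as written is incorrect and a referee would flag this. The paper avoids the issue by carefully unfolding the double coset $C_K^0=H(F)\backslash H(\hat F)/U$ with the explicit factor $\#(H(F)\cap U)=w(K)$, and then it factors the resulting orbital integral $O_\alpha(A_0,A)=\prod_v O_{\alpha,v}(A_0,A)$ into local pieces and computes each $O_{\alpha,v}$ according to whether $v$ is split or nonsplit — a computation the proposal leaves implicit behind the claimed bijection. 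If you restore the $1/w(K)$ in the rewrite and observe that the ideal-assignment is $w(K)$-to-$1$, your more direct argument would go through; but as written the bookkeeping is broken in two places.
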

\begin{proof}
We have 
$$\sum_{I \in C_K^0} \#\lbrace f \in L(A_0,A^I)| \langle f,f \rangle_{CM}^I = \alpha  \rbrace = \sum_{I \in C_K^0} \sum_{\substack{x \in L(A_0,A) \otimes \mathbb{Q}\\ \langle x,x \rangle = \alpha}} 1_{I.L(A_0,A)}(x)$$
where $1_A$ is the characteristic function of $A$. Now using the presentation $C_K^0 = H(F)\backslash H(\hat{F})/U$ using the algebraic group $H$ and $U$ defined in \cite{howard}. The sum above is equal to
$$\sum_{h \in H(F)\backslash H(\hat{F})/U} \sum_{\substack{x \in V(A_0,A)\\ \langle x,x \rangle_{CM} = \alpha}} 1_{\hat{L}(A_0,A)(h^{-1}x)} = \#(H(F) \cap U)\sum_{h \in H(\hat{F})/U} \sum_{\substack{x \in H(F)\backslash V(A_0,A) \\ \langle x,x \rangle_{CM} = \alpha}} 1_{\hat{L}(A_0,A)}(h^{-1}x)$$
Let $\mu(K_0), \mu(K)$ be the group of roots of unity of $K_0, K$ respectively. Now we have that $H(F) \cap U = \mu(K)$ and also $Aut(A_0,A) \cong \mu(K_0) \times \mu(K)$. So we get that 
\begin{equation}
\sum_{I \in C_K^0} \sum_{\substack{f \in L(A_0,A^I) \\ \langle f,f \rangle^I_{CM} = \alpha}} \frac{w(K_0)}{\# Aut(A_0,A^I)} = 
\sum_{h \in H(\hat{F})/U} \sum_{\substack{x \in H(F)\backslash V(A_0,A) \\ \langle x,x \rangle_{CM} = \alpha}} 1_{\hat{L}(A_0,A)}(h^{-1}x)
\end{equation}
Now there are two cases, either there is an $x \in V(A_0,A)$ with $\langle x,x \rangle_{CM} = \alpha$ or there is no $x$ with $\langle x,x \rangle_{CM} = \alpha$. In the latter case, the RHS is zero and in the former case $H(F)$ acts simply transitively on them and so the RHS is $$\frac{1}{w(K_0)} \sum_{h \in H(\hat{F})/ U} 1_{\hat{L}(A_0,A)}(h^{-1}x)$$
Now we define the orbital integral for $\alpha \in F^{\times}$ by
$$O_{\alpha}(A_0,A) = \sum_{h \in H(\hat{F})/U} 1_{\hat{L}(A_0,A)}(h^{-1}x)$$
where $x \in \hat{V}(A_0,A)$ has the property $\langle x,x \rangle_{CM} = \alpha$. If such an $x$ does not exist, $O_{\alpha}(A_0,A)$ is defined to be zero. Now the RHS of equation 4.1 is $\frac{1}{w(K_0)} O_{\alpha}(A_0,A)$. Now let $\beta$ be the element of $\hat{F}^{\times}$ such that the Hermitian form on $\hat{L}(A_0,A)$ is $\beta x \bar{y}$. We break the orbital integral into local parts:
$$O_{\alpha}(A_0,A) = \prod_{v} O_{\alpha,v}(A_0,A)$$
where $O_{\alpha,v}(A_0,A) = \sum_{h \in H(F_v)/U_v} 1_{O_{K,v}}(h^{-1}x_v)$.
We now have two cases, either $v$ is nonsplit in $K$ and we will get 
\begin{equation}
O_{\alpha,v}(A_0,A) =
    \begin{cases*}
      1 & if $\alpha\beta^{-1} \in O_{F,v}$ \\
      0        & otherwise
    \end{cases*}
\end{equation}
or in the split case we see that (in the same way as in \cite{cheraghi}):
\begin{equation}
O_{\alpha,v}(A_0,A) =
    \begin{cases*}
      1+\ord_v(\alpha_v\beta_v^{-1}) & if $\alpha\beta^{-1} \in O_{F,v}$ \\
      0       & otherwise
    \end{cases*} 
\end{equation}
so the product above is going to be $\rho(\alpha\beta^{-1}
O_F) = \#\lbrace J \triangleleft O_K | N_{K/F}J = \alpha\beta^{-1}O_F \rbrace$ and now if $\rho(\alpha \beta^{-1}O_F) \neq 0$ then $\alpha \beta^{-1} \in N_{\hat{K}/\hat{F}}(\hat{K}^{\times})$ and using the fact that we have the ideal of $\beta O_F$, if $\alpha \beta^{-1} \not\in N_{\hat{K}/\hat{F}}(\hat{K}^{\times})$, then $\rho(\alpha \beta^{-1}O_F) = 0$ and we get $O_{\alpha}(A_0,A) = 0$, 
So we finally get the statement of the theorem.
\end{proof}
Now we need a theorem about lengths of strictly henselian local rings:
\begin{theorem}
Let $\alpha \in F^{\times}$ and $\p$ a prime of $\tilde{K}$ such that $\p_F$ is nonsplit in $K$. Then at a point $z \in \mathcal{Z}(\alpha)(\ktildebarp)$, we have 
$$\text{length}(O^{s.h.}_{\mathcal{Z}(\alpha),z}) = \frac{1}{2} \ord_{\tilde{K}_{\p}}(\alpha \p_F\mathfrak{a}^{-1}\partial_{F/F_0})$$
\end{theorem}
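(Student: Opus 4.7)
The plan is to identify $O^{s.h.}_{\mathcal{Z}(\alpha),z}$ with a universal deformation ring and then reduce the length computation to the local results of Section 3. Writing $z = (A_0, A, f)$ for the underlying $\ktildebarp$-point, the theorem cited in the proof of Proposition 3.1 supplies unique canonical lifts of $(A_0, \iota_0, \lambda_0)$ and $(A, \iota, \lambda)$ to every $R \in \text{obj}(\art)$. Hence the deformation functor of $(A_0, A, f)$ attaches to $R$ a singleton if the (necessarily unique) lift of $f$ to the canonical lift exists, and $\emptyset$ otherwise. Consequently $O^{s.h.}_{\mathcal{Z}(\alpha),z} \cong \tilde{W}/m^k$, where $k$ is the largest integer for which $f$ lifts over $\tilde{W}^{(k)}$, and the length of this ring is exactly $k$.

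Next I would factor the lifting problem along the decomposition $A[p^\infty] = \prod_{\mathfrak{q}|p} A[\mathfrak{q}^\infty]$ (and analogously for $A_0$), so that lifting $f$ over $\tilde{W}^{(k)}$ is equivalent to simultaneously lifting each component $f_\mathfrak{q}\colon A_0[\mathfrak{q}_0^\infty] \to A[\mathfrak{q}^\infty]$. The $\Phi$-Kottwitz condition for $A$ differs from the one induced by $\Phi_0$ only at the distinguished embedding $\phi_1^1$, whose $p$-adic reduction picks out the prime $\p_F$ (which is nonsplit in $K$ by Proposition 4.11(2)). Therefore at every prime $\mathfrak{q}$ of $F$ over $p$ with $\mathfrak{q} \neq \p_F$, all elements of $\Phi_v$ restrict to elements of $\Phi_{0,v_0}$, placing us in case 1 of Section 3.1; Proposition 3.1 then asserts that $f_\mathfrak{q}$ lifts uniquely without obstruction, so these primes impose no constraint on $k$. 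At the single prime $\p_F$ we are in case 2.

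Finally, Theorem 3.5 applied at $\p_F$ yields
$$k = \frac{1}{2}\ord_{\tilde{K}_{\p}}(\alpha \mathfrak{a}^{-1} \p_{F_{v_F}} \mathcal{D}_{v_0}^{-1} \mathcal{D}_v),$$
and Proposition 4.3 lets us replace the local factor $\mathcal{D}_{v_0}^{-1} \mathcal{D}_v$ by the $v_F$-component of $\partial_{F/F_0}$, producing the asserted formula. The main obstacle is to justify the reduction of the deformation functor of $(A_0, A, f)$ to the bare lifting obstruction for $f$, which rests on the rigidity of canonical lifts for both pieces of CM data, and to confirm that the case-1/case-2 dichotomy of Section 3.1 is correctly triggered at each prime above $p$; once these two points are in place, the length is read off directly from Theorem 3.5 and the different identification in Proposition 4.3.
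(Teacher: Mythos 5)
Your proposal is correct and takes essentially the same route as the paper's proof: reduce via Serre--Tate to lifting the $p$-divisible group component map $f[p^\infty]$, decompose over primes $\mathfrak{q}\mid p$, observe that only $\mathfrak{q}=\p_F$ falls into case~2 of \S3.1, and then read the length off Theorem~3.5 together with Proposition~4.3 to identify $\mathcal{D}_{v_0}^{-1}\mathcal{D}_v$ with the $v_F$-component of $\partial_{F/F_0}$. The only cosmetic difference is that you spell out the rigidity of the canonical lifts (singleton-or-empty deformation functor) where the paper simply invokes Serre--Tate.
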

\begin{proof}
Consider $(A_0,A,f) \in \mathcal{Z}(\alpha)(\ktildebarp)$ be the triple corresponding to $z$, then the completed strictly henselian ring $\hat{O}^{s.h.}_{\mathcal{Z}(\alpha),z}$ pro-represents the deformations of $(A_0,A,f)$ to objects of ART which by Serre-Tate, is in turn 
the same as deformations of $(A_0[p^{\infty}],A[p^{\infty}],f[p^{\infty}])$ to the objects 
of ART. Now we have the decomposition $A_0[p^{\infty}] = \prod_{\substack{\mathfrak{q}_0|p \\ \mathfrak{q}_0 \triangleleft O_{F_0}}} A_0[\mathfrak{q}^{\infty}]$ and $A[p^{\infty}] = 
\prod_{\substack{\mathfrak{q}|p \\ \mathfrak{q} \triangleleft O_F}} A[\mathfrak{q}^{\infty}]
$, so that the map $f[p^{\infty}]$ is decomposed into $f_{\mathfrak{q}_0,\mathfrak{q}}: 
A_0[\mathfrak{q}_0^{\infty}] \rightarrow A[\mathfrak{q}^{\infty}]$ for different $
\mathfrak{q}_0 \triangleleft O_{F_0}$ and $\mathfrak{q} \triangleleft O_F$ above the prime $p$, so we have to analyze the liftings of $f[\mathfrak{q}_0^{\infty}]$ to higher Artin rings (i.e. to higher powers $k$ in $\tilde{W}/m^k$). Now we have two cases:\\
(1) $\mathfrak{q} \neq \p_F$, in this case, the $p$-adic CM-types of $A_0[\mathfrak{q}_0^{\infty}]$ and $A[\mathfrak{q}^{\infty}]$ are compatible (i.e. the $p$-adic CM-type of $A[\mathfrak{q}^{\infty}]$ is exactly the embeddings whose restriction to $F_{0,\mathfrak{q}_0}$ induces the embeddings in the $p$-adic CM-types of $A_0[\mathfrak{q}_0^{\infty}]$).\\
(2) $\mathfrak{q} = \p_F$, in this case, the $p$-adic CM-types of $A_0[\mathfrak{q}_0^{\infty}]$ and $A[\mathfrak{q}^{\infty}]$ are incompatible and there's exactly one embedding in the $p$-adic CM-type of $A[\mathfrak{q}^{\infty}]$ such that restriction to $F_{0,\mathfrak{q}_0}$ is the conjugation of one embedding of $p$-adic CM-type of $A_0[\mathfrak{q}_0^{\infty}]$, so we are in the situation of theorem 3.5 in section 3 and so the deformations of $(A_0[\mathfrak{q}_0^\infty], A[\mathfrak{q}^{\infty}], f_{\mathfrak{q}_0,\mathfrak{q}})$ to objects of ART is pro-represented by $\tilde{W}/m^k$ where $k = \frac{1}{2} \ord_{\tilde{K}_{\p}}(\alpha \mathfrak{a}^{-1} \p_F \partial_{F/F_0}) = \frac{1}{2} \ord_{\tilde{K}_{\p}}(\alpha \mathfrak{a}^{-1} \p_F \partial_{F/F_0})$
So we get 
$$\text{length}(O^{s.h.}_{\mathcal{Z}(\alpha),z}) = \text{length}(\hat{O}^{s.h.}_{\mathcal{Z}(\alpha),z}) = \frac{1}{2}\ord_{\tilde{K}_{\p}}(\alpha \mathfrak{a}^{-1} \p_F \partial_{F/F_0})$$
\end{proof}
Now we collect everything from theorem 4.12 and 4.13 and we get the main result of this section:
\begin{theorem}
If $\alpha \in F^{\gg 0}$ and for $\p$ a prime of $\tilde{K}$, let $\beta_{\p}$ be the $\beta$ appearing in Theorem 4.9, then 
$$\widehat{\deg} \mathcal{Z}(\alpha) = \frac{1}{2w(K_0)} \sum_{\substack{\p \\ \alpha \beta_{\p}\in N_{\hat{K}/\hat{F}}(\hat{K}^{\times})}} \rho(\alpha \mathfrak{a}^{-1} \partial_{F/F_0} \p_F^{-\epsilon_{\p_F}}) \ord_{\tilde{K}_{\p}}(\alpha \mathfrak{a}^{-1} \p_F \partial_{F/F_0})\frac{\log N_{\tilde{K}/\mathbb{Q}}(\p)}{[\tilde{K}:\mathbb{Q}]} =$$
$$= \frac{1}{w(K_0)}\sum_{\substack{\mathfrak{q} \subseteq O_F \\ \alpha \beta_{\mathfrak{q}} \in N_{\hat{K}/\hat{F}}(\hat{K}^{\times})}} \rho(\alpha \mathfrak{a}^{-1} \partial_{F/F_0} \mathfrak{q}^{-\epsilon_{\mathfrak{q}}}) \ord_{\mathfrak{q}}(\alpha \mathfrak{a}^{-1} \mathfrak{q} \partial_{F/F_0})\frac{\log N_{F/\mathbb{Q}}(\mathfrak{q})}{[K:\mathbb{Q}]}$$
where $\mathfrak{q}$ changes over the primes of $O_F$ nonsplit in $K$ and $\p$ appearing in $\beta_{\p}$ in the second sum is a choice of prime $\p$ of $\tilde{K}$ above $\mathfrak{q}$.
\end{theorem}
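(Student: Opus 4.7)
The plan is to unfold the definition of $\widehat{\deg}\mathcal{Z}(\alpha)$ given in the introduction and then plug in the two main local inputs already established, namely the point count from Theorem 4.12 and the length formula from Theorem 4.13. Concretely, by that definition we have
\[
\widehat{\deg}\mathcal{Z}(\alpha) = \frac{1}{[\tilde K:\mathbb{Q}]}\sum_{\p \subset O_{\tilde K}} \log N(\p)\sum_{z \in \mathcal{Z}(\alpha)(\ktildebarp)} \frac{\text{length}(O^{s.h.}_{\mathcal{Z}(\alpha),z})}{\#\text{Aut}\,z},
\]
and by Proposition 4.11(2) only primes $\p$ with $\p_F$ nonsplit in $K$ contribute.

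Next I would observe that for such a $\p$ the length in Theorem 4.13 is a function of $\alpha$ and of $\p_F$ only, and in particular does not depend on the point $z$. Hence it factors out of the inner sum, leaving $\sum_z 1/\#\text{Aut}\,z$, which is exactly the quantity computed in Theorem 4.12. Substituting both gives
\[
\widehat{\deg}\mathcal{Z}(\alpha) = \frac{1}{[\tilde K:\mathbb{Q}]}\sum_{\substack{\p \\ \alpha\beta_{\p}\in N_{\hat K/\hat F}(\hat K^\times)}} \log N(\p)\cdot \frac{1}{w(K_0)}\rho(\alpha\mathfrak{a}^{-1}\partial_{F/F_0}\p_F^{-\epsilon_{\p_F}})\cdot \tfrac{1}{2}\ord_{\tilde K_{\p}}(\alpha\mathfrak{a}^{-1}\p_F\partial_{F/F_0}),
\]
which is precisely the first displayed expression.

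For the second equality I would reorganize the sum over primes $\p$ of $\tilde K$ by the prime $\mathfrak{q} = \p_F$ of $F$ sitting below $\p$. Observe that $\beta_{\p}$, the class $\epsilon_{\p_F}$, the ideal $\rho(\alpha\mathfrak{a}^{-1}\partial_{F/F_0}\p_F^{-\epsilon_{\p_F}})$, and the norm condition $\alpha\beta_{\p}\in N_{\hat K/\hat F}(\hat K^\times)$ all depend only on $\mathfrak{q}$, not on $\p$ itself. Using $\ord_{\tilde K_{\p}} = e(\p/\mathfrak{q})\ord_{\mathfrak{q}}$ and $\log N_{\tilde K/\mathbb{Q}}(\p) = f(\p/\mathfrak{q})\log N_{F/\mathbb{Q}}(\mathfrak{q})$, the sum over primes $\p$ above a fixed $\mathfrak{q}$ yields a factor $\sum_{\p\mid\mathfrak{q}} e(\p/\mathfrak{q})f(\p/\mathfrak{q}) = [\tilde K:F]$. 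Combining this with $[\tilde K:F]/(2[\tilde K:\mathbb{Q}]) = 1/(2[F:\mathbb{Q}]) = 1/[K:\mathbb{Q}]$ yields the second displayed formula.

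No step is really a serious obstacle: Theorems 4.12 and 4.13 do all of the geometric and number-theoretic work, and what remains is pure bookkeeping. The only point to be careful about is verifying that the norm-condition on $\beta_{\p}$ and the ideal $\rho(\alpha\mathfrak{a}^{-1}\partial_{F/F_0}\p_F^{-\epsilon_{\p_F}})$ genuinely descend to invariants of $\mathfrak{q}$, which is immediate from the explicit description of $\beta$ in Theorem 4.9.
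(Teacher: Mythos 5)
Your proposal is correct and follows essentially the same approach as the paper, whose proof is the single sentence that the theorem "results from theorem 4.12 and 4.13"; you have filled in the straightforward bookkeeping (factoring the constant length out of the stacky point count, then converting the sum over primes $\p$ of $\tilde K$ into a sum over primes $\mathfrak{q}$ of $F$ via $\sum_{\p\mid\mathfrak{q}} e(\p/\mathfrak{q})f(\p/\mathfrak{q}) = [\tilde K:F]$ and $2[F:\mathbb{Q}]=[K:\mathbb{Q}]$), which is exactly what the paper leaves implicit.
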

\begin{proof}
This results from theorem 4.12 and 4.13.
\end{proof}

\section{Arithmetic Chow group}
In this section, we are going to define the arithmetic divisors as elements of $\widehat{\text{CH}}^1(\mathcal{X})$ and find their degrees. These degrees will in turn be related to not positive definite coefficients of the Eisenstein series that we are going to define later (see section 5).\\
An arithmetic divisor of $\mathcal{X} = \mathcal{M}_0^{\xi} \times_{O_{\tilde{K}}} \cm_{\Phi}^{\mathfrak{a}}$  is a pair $(Z,\text{Gr})$ such that $Z$ is a Weil divisor on $
\mathcal{X}$ and $\text{Gr}$ is a Green function for $Z$. We are going to define the 
arithmetic divisors $\widehat{\mathcal{Z}}(\alpha)$ for $0 \neq \alpha \in F^{\times}$ that are not necessarily totally positive. If $\alpha \gg 0$, we want to get $\widehat{\mathcal{Z}}(\alpha) = (\mathcal{Z}(\alpha),0)$, and in the other cases we want to get $\widehat{\mathcal{Z}}(\alpha) = (0,\text{Gr}_\alpha)$ for some Green function $\text{Gr}_\alpha$.\\
As $\text{Gr}_{\alpha}$ is a Green function for $\widehat{\mathcal{Z}}(\alpha)$ and $\widehat{\mathcal{Z}}(\alpha)$ does not have any characteristic zero points, we have that $\text{Gr}_\alpha$ can be any complex-valued function on the finite set
$$\coprod_{\substack{\sigma: \tilde{K} \rightarrow \mathbb{C}\\ \sigma|_{K_0} \in \Phi_0}}((\mathcal{M}_0^{\xi}) \times_{\O_{\tilde{K}}} \cm_{\Phi}^{\mathfrak{a}})^{\sigma}(\mathbb{C})$$
Now we define the Green functions $\text{Gr}_\alpha$ on the point $z \in ((\mathcal{M}_0^{\xi}) \times_{\O_{\tilde{K}}} \cm_{\Phi}^{\mathfrak{a}})^{\sigma}(\mathbb{C})$ corresponding to $(A_0,A)$ to be 
$$\text{Gr}_{\alpha}(y,\alpha) = \sum_{\substack{f \in L_{Betti}(A_0,A)\\ \langle f,f \rangle_{CM} = \alpha}} \beta_1(4\pi |y\alpha|_{\tilde{\sigma} \circ \infty^{sp}})$$
where $\tilde{\sigma} \in \text{Aut}(\mathbb{C})$ is an extension of $\sigma: \tilde{K} \rightarrow \mathbb{C}$ and $\beta_1(t)  = \int_1^\infty e^{-tu} \frac{du}{u}$.\\
We have that $\widehat{\mathcal{Z}}(\alpha) = (\mathcal{Z}(\alpha), \text{Gr}_\alpha)$ is an element of the first Chow group $\widehat{\text{CH}}^1(\mathcal{X})$ and on this Chow group, we have a degree function that maps:
$$\widehat{deg}: \widehat{\text{CH}}^1(\mathcal{X}) \rightarrow \widehat{\text{CH}}^1(Spec\; O_{\tilde{K}}) \rightarrow \mathbb{R}$$
and for an arithmetic divisor $(Z,\text{Gr})$, it is defined to be
$$\widehat{\deg}(Z,\text{Gr}) = \frac{1}{[\tilde{K}:\mathbb{Q}]} (\sum_{\p \in O_{\tilde{K}}} \sum_{z \in Z(\ktildebarp)} \frac{\log N(\p)}{\# \text{Aut}\; z} + \sum_{\substack{\sigma:\tilde{K} \rightarrow \mathbb{C} \\ \sigma|_{K_0} \in \Phi_0}}\sum_{z \in ((\mathcal{M}_0^{\xi}) \times_{\O_{\tilde{K}}} \cm_{\Phi}^{\mathfrak{a}})^{\sigma}(\mathbb{C})} \frac{\text{Gr}(z)}{\# \text{Aut}\; z})$$
Now an easy computation for $\alpha$ not positive definite shows that
$$\widehat{\deg}\widehat{\mathcal{Z}}(\alpha) = \begin{cases*}
      \frac{1}{w(K_0) [K:\mathbb{Q}]} \beta_1(4\pi|y\alpha|_v) \rho(\alpha \partial_{F/F_0} 
      \mathfrak{a}^{-1}) &  \text{if} $\alpha$ \text{is negative definite at exactly one place} \\
      0        & \text{otherwise}
    \end{cases*}$$

\section{Eisenstein series}
For completeness we define the Eisenstein series in this section. These are Eisenstein series with the property that the Fourier coefficients of this Eisenstein series are related to the degree of divisors considered in previous sections.\\
Let the notations be as in the notations section. Fix a place $v$ of $K$ and let $v_F$ be the prime below $v$ in $F$ and some $c \in F_{v_F}^{\times}$. $\chi_{v_F}: F_{v_F}^{\times} \rightarrow \mathbb{C}^{\times}$ be character of $K_v/F_{v_F}$. $\psi$ be an additive character $F_{v_F} \rightarrow \mathbb{C}^{\times}$. Now there's a space of Schwartz functions $G(K_v)$ and $L(\chi_v, s)$ the space of induced representation of $\chi_v(x)|x|_v^{s}$. These two spaces have actions of $SL_2(F_{v_F})$ and we have an operator 
$$\lambda_{c,\psi}: G(K_v) \rightarrow L(\chi_v,0)$$
$$\lambda_{c,\psi}(\phi)(g) = (\omega_{c,\psi}(g)\phi)(0)$$ 
There's a unique section $\Phi_{c,\psi}(g,s) \in I(\chi_v,s)$ with $\Phi(.,0) = \lambda_{c,\psi}(1_{O_{K_v}})$ ($1_{O_{K_v}}$ is the characteristic function of $O_{K_v}$ in $K_v$) and $\Phi(g,s)$ is independent of $s$ for $g$ in maximal compact subgroup of $SL_2(F_{v_F})$ if $v$ is nonarchimedean. $\Phi(.,0) = \lambda_{c,\psi}(e^{-2\pi |cx\bar{x}|_v})$ if $v$ in archimedean. For $\alpha \in F^{\times}_{v_F}$, define the local Whittaker function
$$W_{\alpha}(g,s,\Phi_{c,\psi},\psi) = \int_{F_{v_F}} \Phi_{c,\psi}(\begin{bmatrix}
0 & -1 \\
1 & 0 
\end{bmatrix} \begin{bmatrix}
1 & x \\
0 & 1
\end{bmatrix}g,s)\psi_v(-\alpha x)dx$$
Now I want to define the setup for global situation. Let $\psi_{\mathbb{Q}}: \mathbb{A}_{\mathbb{Q}}/\mathbb{Q} \rightarrow \mathbb{C}^{\times}$ be the additive character with $
\psi_{\mathbb{Q}}(x) = e^{2\pi i x}$ for $x \in \mathbb{R}$ and unramified nonarchimedean components (i.e. $\psi_{\mathbb{Q}}(\bar{\mathbb{Z}_p}) = 1$ for all $p$ where $\mathbb{Z}_p
$ is $\mathbb{Z}_p\mathbb{Q}/\mathbb{Q} \subseteq \mathbb{A}_{\mathbb{Q}}/\mathbb{Q}$). Let $\psi_F(x) = \psi_{\mathbb{Q}}(tr_{F/\mathbb{Q}}(x))$ and $\chi: \mathbb{A}_F^{\times} 
\rightarrow \mathbb{C}^{\times}$ be the character of $K/F$. For $c \in \mathbb{A}_F^{\times}$, let $\Phi_{c,\psi_F} = \otimes_v \Phi_{c,\psi_{F_v}}$ and define an Eisenstein series 
$$E(g,s,c,\psi_F) = \sum_{\gamma \in B(F)\backslash SL_2(F)} \Phi_{c,\psi_F}(\gamma g,s)$$
($B$ is the Borel subgroup of $SL_2$ of upper-triangular matrices). For normalizing the above Eisenstein series, let $\mathbb{H}_F = \lbrace x+iy \in F\otimes_{\mathbb{Q}} \mathbb{C}| x,y \in F\otimes_{\mathbb{Q}} \mathbb{R}, y \gg 0 \rbrace$. For $\tau = x+iy$, let $g_{\tau} \in SL_2(\mathbb{A}_F)$ have archimedean components
$$\begin{bmatrix}
1 & x\\
0 & 1
\end{bmatrix}\begin{bmatrix}
y^{\frac{1}{2}} & 0\\
0 & y^{-\frac{1}{2}} 
\end{bmatrix} \in SL_2(F \otimes_{\mathbb{Q}} \mathbb{R})$$ and trivial components. Now let the normalized Eisenstein series be (by abuse of notation):
$$E(\tau,s,c,\psi_F) = N_{F/\mathbb{Q}}(\partial_{F/F_0})^{\frac{s+1}{2}} \frac{L(s+1,\chi)}{N_{F/\mathbb{Q}}(y)^{\frac{1}{2}}} E(g_{\tau}, s,c,\psi_F)$$
where $L(s,\chi)$ is the Dirichlet function of $\chi$. $E$ has a Fourier expansion
$$E(\tau, s,c,\psi_F) = \sum_{\alpha \in F} E_{\alpha}(\tau,s,c,\psi_F)$$
with $$E_{\alpha}(\tau, s, c, \psi_F) = N_{F/\mathbb{Q}}(y)^{-\frac{1}{2}} \int_{F \mathbb{A}_F}E(\begin{bmatrix}
1 & b\\
0 & 1
\end{bmatrix}g_{\tau},s,c,\psi_F(-ba))db$$
Now let $c$ has the property $cO_F = \partial_{F/F_0}^{-1} \mathfrak{a}$ and $c$ has trivial archimedean components with $\chi(c) = -1$. $\chi(c) = -1$ implies that the Eisenstein series is incoherent and so $E(\tau,0,c,\psi_F) = 0$.\\
We finally define the Eisenstein series to be 
$$\mathcal{E}(\tau,s) = {E}(\tau,s,c,\psi_F)$$
where $c \in \mathbb{A}_{F}^{\times}$ is taking $\beta$ appearing in prop 4.8 and replacing the component at $\infty^{sp}$ with a positive definite element. This Eisenstein series has Fourier coefficients $\mathcal{E}(\tau,s) = \sum_{\alpha \in F} \mathcal{E}_{\alpha}(\tau,s)$. The theorem below computes the Fourier coefficients of the derivative of $\mathcal{E}(\tau,s)$ at $s=0$ and also the order of Fourier coefficients of $\mathcal{E}(\tau,s)$ at $s=0$. Let $\text{Diff}(\alpha,c) = \lbrace v|\chi_v(\alpha c) = -1 \rbrace$ be a finite subset of places of $F$. This set is easily seen to have odd cardinality by $\chi(c) = -1$.
\begin{theorem}
Let $\alpha$ be nonzero $F$ and $d_{K/F}$ be the relative discriminant of $K/F$, $r$ be the number of places of $F$ ramified in $K$ (including the archimedean places). Then
\begin{itemize}
\item $\# \text{Diff}(\alpha,c) > 1 \Rightarrow \ord_{s=0}({E}_{\alpha}(\tau,s,c,\psi_F)) > 1$
\item If $ \text{Diff}(\alpha,c) = \lbrace \p \rbrace$ with $\p$ a finite prime of $F$, then
$${E}^{\prime}_{\alpha}(\tau,0,c,\psi_F) = \frac{-2^{r-1}}{N_{F/\mathbb{Q}}(d_{K/F})^{\frac{1}{2}}} \rho(\alpha \partial_{F/F_0} \mathfrak{a}^{-1} \p^{-\epsilon_{\p}}) \ord_{\p}(\alpha \partial_{F/F_0} \mathfrak{a}^{-1} \p) \log(N_{F/\mathbb{Q}}(\p)) e^{2\pi i tr_{F/\mathbb{Q}}(\alpha \tau)}$$
\item If $\text{Diff}(\alpha,c) = \lbrace w \rbrace$ where $w$ is an archimedean place, then
$$E_{\alpha}^{\prime}(\tau, 0, c, \psi_F) = \frac{2^{-(r-1)}}{N(d_{K/F})^{\frac{1}{2}}} \rho(\alpha\partial_{F/F_0}\mathfrak{a}^{-1})\beta_1(4\pi|y\alpha|_v)q^\alpha$$
\end{itemize}
\end{theorem}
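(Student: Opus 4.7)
The plan is to follow the standard Fourier-expansion approach and factor the $\alpha$-th Fourier coefficient into a product of local Whittaker functions, parallel to the corresponding result in \cite{cheraghi}. First I would unfold the Eisenstein series to obtain the product formula
$$E_\alpha(\tau, s, c, \psi_F) = N_{F/\mathbb{Q}}(\partial_{F/F_0})^{(s+1)/2} \frac{L(s+1, \chi)}{N_{F/\mathbb{Q}}(y)^{1/2}} \prod_v W_{\alpha, v}(g_{\tau, v}, s, \Phi_{c, \psi_{F_v}}, \psi_{F_v}),$$
reducing the computation to each local factor $W_{\alpha,v}$ at a place $v$ of $F$.

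Second, I would establish the vanishing criterion $W_{\alpha, v}(g_v, 0) = 0 \iff v \in \text{Diff}(\alpha, c)$. This is the standard consequence of incoherence of the local induced representation $I(\chi_v, 0)$ at places where $\chi_v(\alpha c) = -1$. The first assertion is then immediate: if $\#\text{Diff}(\alpha, c) > 1$, at least two local factors vanish at $s=0$, so $\ord_{s=0} E_\alpha > 1$.

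Third, for the nontrivial cases assume $\text{Diff}(\alpha, c) = \{v_0\}$, so by Leibniz
$$E'_\alpha(\tau, 0, c, \psi_F) = N_{F/\mathbb{Q}}(\partial_{F/F_0})^{1/2} \frac{L(1, \chi)}{N_{F/\mathbb{Q}}(y)^{1/2}} \, W'_{\alpha, v_0}(g_{v_0}, 0) \prod_{v \neq v_0} W_{\alpha, v}(g_v, 0).$$
For a finite place $v \neq v_0$ the standard formula expresses $W_{\alpha, v}(g_v, 0)$ as a local $\gamma$-factor times an ideal-counting function, whose product across such $v$ assembles into the representation density $\rho(\alpha \partial_{F/F_0} \mathfrak{a}^{-1} \p^{-\epsilon_\p})$. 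The remaining derivative $W'_{\alpha, v_0}(g_{v_0}, 0)$ is computed case by case: at a finite prime $\p$ differentiating the local factor $1 - q_\p^{-s-1}$-type expression yields $\log N_{F/\mathbb{Q}}(\p) \cdot \ord_\p(\alpha \partial_{F/F_0} \mathfrak{a}^{-1} \p)$, and at an archimedean place $w$ one obtains $\beta_1(4\pi|y\alpha|_v)\, e^{2\pi i \,\text{tr}_{F/\mathbb{Q}}(\alpha \tau)}$ directly from the defining integral $\beta_1(t) = \int_1^\infty e^{-tu}\, du/u$. Finally, $L(1, \chi)$ together with $N_{F/\mathbb{Q}}(\partial_{F/F_0})^{1/2}$ and the product of archimedean $\gamma$-factors collapses, via the conductor-discriminant formula and the functional equation of $L(s,\chi)$, to the stated constant $\pm 2^{\pm(r-1)}/N_{F/\mathbb{Q}}(d_{K/F})^{1/2}$, with the sign and the sign of the exponent determined by whether $v_0$ is finite or archimedean.

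The main obstacle I anticipate is not any single step (each individual local Whittaker computation is essentially standard, following Kudla-Yang and the analogous computations in \cite{cheraghi}) but rather the bookkeeping of normalizing constants: tracking local $\gamma$-factors across split, inert, ramified, and archimedean places, and assembling them via the conductor-discriminant formula to produce the precise constants in each of the three cases. The only substantive difference from the argument in \cite{cheraghi} is the appearance of $\partial_{F/F_0}$ in place of $\partial_{K/F}$, reflecting the modified choice of $c$ dictated by Proposition 4.8; the structure of the proof is otherwise unchanged.
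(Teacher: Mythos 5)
Your proposal follows essentially the same approach as the paper's proof: factor the $\alpha$-th Fourier coefficient as a product of local Whittaker functions, invoke the vanishing criterion at places in $\text{Diff}(\alpha,c)$ to get the first assertion and reduce to the Leibniz rule when $\#\text{Diff}=1$, and then apply Yang's explicit local Whittaker formulas (Kudla--Yang) to compute the value or derivative at each place. The only presentational difference is that the paper absorbs $L(s+1,\chi_v)$ into a normalized local Whittaker function $W^*$ before taking the product, rather than keeping $L(s+1,\chi)$ as a separate global factor to be cancelled at the end; this is equivalent bookkeeping.
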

\begin{proof}
For $g \in SL_2(F_v)$, consider the normalized local Whittaker function
$$W_{\alpha_v}^{\ast}(g_v,s,c_v,\psi_{F_v}) = L(s+1,\chi_v) W_{\alpha_v}(g_v,s,c_v,\psi_{F_v})$$
Now we have the factorization
$${E}_{\alpha}(\tau,s,c,\psi_F) = N_{F/\mathbb{Q}}(y)^{-\frac{1}{2}} \prod_v W_{\alpha_v}^{\ast}(g_{\tau,v},s,c_v,\psi_{F_v})$$
so we have
$${E}_{\alpha}(\tau,s,c,\psi_F) = N_{F/\mathbb{Q}}(y)^{-\frac{1}{2}} \prod_v W_{c_v^{-1}\alpha_v}^{\ast}(g_{\tau,v}, s,1,c_v\psi_{F_v})$$
where $(c\psi_{F})(x) = \psi_F(cx)$ is an unramified character of $\mathbb{A}_F^{\times}$ and also $(c_v\psi_{F_v})(x) = \psi_{F_v}(c_v x)$. By Yang's formula
$$\chi_v(\alpha c) = -1 \Leftrightarrow W^{\ast}_{c_v^{-1}\alpha_v}(g_{\tau,v},0,1,c_v\psi_{F_v}) = 0$$
If $v$ is nonarchimedean then by Yang's formulas \cite{yang}, we get:\\
(1) If $\chi_v(\alpha c)= 1$, then 
$$W_{c_v^{-1}\alpha}^{\ast}(g_{\tau,v},0,1,c_v\psi_{F_v}) = \chi_v(-1) \epsilon(\frac{1}{2}, \chi_v, c_v\psi_{F_v})\rho(\alpha \partial_{F/F_0}\mathfrak{a}^{-1})\begin{cases*}
      2N(\pi_{F_v})^\frac{-\ord_v(d_{K/F})}{2} & \text{if} v \text{is ramified in} K/F \\
      1        & \text{if v is unramified in K/F}
    \end{cases*}$$

(2) If $\chi_v(\alpha c) = -1$, then 
$$\frac{d}{ds} W^{\ast}_{c_v^{-1}\alpha}(g_{\tau,v}, s,1,c_v\psi_{F_v})|_{s=0} =$$
$$\chi_v(-1)\epsilon(\frac{1}{2}, \chi_v, c_v\psi_{F_v})\log|\pi_{F_v}|^{-1}\frac{\ord_v(\alpha)+1}{2}\begin{cases*}
      2N(\pi_{F_v})^{\frac{-\ord_v(d_{K/F})}{2}}\rho(\alpha \partial_{F/F_0} \mathfrak{a}^{-1}O_{F_v}) & \text{if v is ramified in K/F} \\
      \rho_v(\alpha \partial_{F/F_0} \mathfrak{a}^{-1} \p_v^{-1})        & \text{if v unramified in K/F}     \end{cases*}$$
      
If $v$ is archimedean, if $\chi_v(\alpha c) = 1$, then
$$W^{\ast}_{c_v^{-1}\alpha_v}(g_{\tau,v},0,1,c_v\psi_{F_v}) = 2\chi_v(-1)\epsilon(\frac{1}{2},\chi_v,c_v\psi_{F_v})y_v^{\frac{1}{2}}e^{2\pi i \alpha_v \tau_v}$$
and if $\chi_v(\alpha c) = -1$, then $W^{\ast}_{c_v\alpha_v^{-1}}(g_{\tau,v},0,1,c_v\psi_{F_v})y_v^{\frac{1}{2}}\beta_1(4\pi|y\alpha|_v)e^{2\pi i \alpha_v \tau_v}$      
      
Now if $\text{Diff}(\alpha,c) = \lbrace w \rbrace$, then
$$\frac{d}{ds} {E}_{\alpha}(\tau, s,c,\psi_F)|_{s=0} = N_{F/\mathbb{Q}}(y)^{\frac{-1}{2}} \frac{d}{ds}W^{\ast}_{c_w^{-1}\alpha_w}(g_{\tau,w},s,1,c\psi_{F_w})|_{s=0} \prod_{v\neq w} W^{\ast}_{c_v^{-1}\alpha_v}(g_{\tau,v}, 0,1,c\psi_{F_v})$$      
and we get the formulas stated in the statement of theorem.
\end{proof}
So we get that for $\alpha \gg 0$ in $F$:
$$\mathcal{E}_{\alpha}^{\prime}(\tau,0) = {E}_{\alpha}^{\prime}(\tau,0,c,\psi_F) =$$ $$= \frac{-2^{r-1}}{\sqrt{N_{F/\mathbb{Q}}(d_{K/F})}} \rho(\alpha \partial_{F/F_0} \mathfrak{a}^{-1} \p^{-\epsilon_{\p}})\ord_\p(\alpha \partial_{F/F_0} \mathfrak{a}^{-1})\log(N(\p))e^{2\pi i tr_{F/\mathbb{Q}}(\alpha \tau)} =: b_{\Phi}(\alpha,y)$$
for $\p \subseteq O_F$ nonsplit with $\text{Diff}(\alpha,c) = \lbrace \p \rbrace$. Now using the above and Theorem 4.14, one can check that we get the main result:
\begin{theorem}
Let $\alpha$ be nonzero in $F$. Suppose that the ramification condition in the introduction is satisfied and the assumption below proposition 4.1 is satisfied, then
$$\widehat{\deg}\mathcal{Z}(\alpha) = \frac{-1}{w(K_0)}\frac{N_{F/\mathbb{Q}}(d_{K/F})^{\frac{1}{2}}}{2^{r-1}[K:\mathbb{Q}]} b_{\Phi}(\alpha,y).$$
\end{theorem}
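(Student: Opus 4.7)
The plan is to derive Theorem 5.2 by combining the Arakelov-degree formula from Theorem 4.14 (together with its Green-function counterpart from Section 5 when $\alpha$ is not totally positive) with the Fourier-coefficient formulas from Theorem 5.1, matching the two expressions on a prime-by-prime basis. The key observation is that both sides of the claimed identity decompose as sums in which at most one term is nonzero, and the indexing of these terms coincides.

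First I would analyze when both sides vanish. Since $\chi(c) = -1$, the set $\text{Diff}(\alpha,c)$ has odd cardinality, and Theorem 5.1 shows that $b_\Phi(\alpha,y) = 0$ unless $\#\,\text{Diff}(\alpha,c) = 1$. Simultaneously, using $\beta_{\mathfrak{q}} O_{F,\mathfrak{q}} = \mathfrak{a}\partial_{F/F_0}^{-1}\mathfrak{q}^{\epsilon_\mathfrak{q}} O_{F,\mathfrak{q}}$ from Theorem 4.9 and the fact that $c$ agrees with $\beta$ at all finite places (and is chosen positive definite at the archimedean places), the norm condition $\alpha\beta_{\mathfrak{q}} \in N_{\hat{K}/\hat{F}}(\hat{K}^{\times})$ indexing the sum in Theorem 4.14 forces $\text{Diff}(\alpha,c) \subseteq \{\mathfrak{q}\} \cup \{\text{archimedean places where }\alpha < 0\}$, so there is at most one contributing finite prime. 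The case where the unique bad place is archimedean is handled by the Green-function piece of $\widehat{\deg}\,\widehat{\mathcal{Z}}(\alpha)$ in Section 5, which matches the archimedean bullet of Theorem 5.1 through the common $\beta_1(4\pi |y\alpha|_v)$ factor; the coefficient arithmetic there is directly parallel to the finite-prime case below.

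Next I would match the nonzero contributions when $\text{Diff}(\alpha,c) = \{\mathfrak{p}\}$ with $\mathfrak{p}$ a finite prime of $F$. Theorem 4.14 collapses to the single term
$$\widehat{\deg}\,\mathcal{Z}(\alpha) = \frac{1}{w(K_0)\,[K:\mathbb{Q}]}\,\rho\!\left(\alpha\,\mathfrak{a}^{-1}\partial_{F/F_0}\mathfrak{p}^{-\epsilon_{\mathfrak{p}}}\right)\,\ord_{\mathfrak{p}}\!\left(\alpha\,\mathfrak{a}^{-1}\mathfrak{p}\,\partial_{F/F_0}\right)\,\log N_{F/\mathbb{Q}}(\mathfrak{p}),$$
while Theorem 5.1 gives
$$b_\Phi(\alpha,y) = \frac{-2^{r-1}}{\sqrt{N_{F/\mathbb{Q}}(d_{K/F})}}\,\rho\!\left(\alpha\,\partial_{F/F_0}\mathfrak{a}^{-1}\mathfrak{p}^{-\epsilon_{\mathfrak{p}}}\right)\,\ord_{\mathfrak{p}}\!\left(\alpha\,\partial_{F/F_0}\mathfrak{a}^{-1}\mathfrak{p}\right)\,\log N(\mathfrak{p})\,e^{2\pi i\,\text{tr}(\alpha\tau)}.$$
The two expressions share the same divisor-counting function $\rho(\cdot)$, the same valuation factor, and the same logarithm, so their ratio is precisely the constant $-\frac{1}{w(K_0)}\cdot\frac{N_{F/\mathbb{Q}}(d_{K/F})^{1/2}}{2^{r-1}[K:\mathbb{Q}]}$ claimed in the theorem (the $q^\alpha$ exponential is the standard Fourier-coefficient normalization absorbed into $b_\Phi$).

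The main obstacle is the careful prime-by-prime bookkeeping in the first step: showing that the global norm condition $\alpha\beta_{\mathfrak{q}} \in N_{\hat{K}/\hat{F}}(\hat{K}^{\times})$ on the geometric side and the local sign condition $\chi_{\mathfrak{q}}(\alpha c_{\mathfrak{q}}) = 1$ (equivalently $\mathfrak{q} \notin \text{Diff}(\alpha,c)$) on the analytic side cut out the same set of primes once the archimedean discrepancy between $\beta$ and $c$ at $\infty^{sp}$ is reconciled; this uses the explicit archimedean modification of $\beta$ entering the definition of $\mathcal{E}(\tau,s)$. Matching the absolute constant $2^{r-1}/\sqrt{N(d_{K/F})}$ requires identifying it with the product $\prod_{v} \chi_v(-1)\epsilon(\tfrac{1}{2},\chi_v,c_v\psi_{F_v})$ coming from Yang's formulas, partitioned according to whether $v$ is ramified or unramified in $K/F$; this is routine but must be tracked precisely.
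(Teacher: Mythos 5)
Your proposal is correct and follows essentially the same route the paper takes: the paper's ``proof'' of Theorem 5.2 is simply the one-line remark that the result follows from Theorem 4.14 together with Theorem 5.1, and you have unpacked exactly that check (matching of the vanishing loci via $\text{Diff}(\alpha,c)$ and the norm condition $\alpha\beta_{\mathfrak{q}}\in N_{\hat K/\hat F}(\hat K^\times)$, the single surviving finite-prime term, the archimedean/Green-function case, and the ratio of the explicit constants). The only small caveat is that the $\epsilon$-factor computation producing $2^{r-1}/\sqrt{N_{F/\mathbb{Q}}(d_{K/F})}$ is internal to the proof of Theorem 5.1 rather than part of deriving Theorem 5.2 from it, but that does not affect the correctness of your plan.
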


\begin{bibdiv}
\begin{biblist}*{labels={alphabetic}}

\bib{cheraghi}{article}{
      title={Special Correspondences of CM Abelian Varieties and Eisenstein Series}, 
      author={Cheraghi, A.},
      year={2021},
      eprint={arXiv.2107.00542},
      archivePrefix={arXiv},
      primaryClass={math.NT}
}

\bib{howard}{article}{
   author={Howard, B.},
   title={Complex multiplication cycles and Kudla-Rapoport divisors},
   journal={Ann. of Math. (2)},
   volume={176},
   date={2012},
   number={2},
   pages={1097--1171},
   issn={0003-486X},
}

\bib{rsz}{article}{
   author={Rapoport, M.},
   author={Smithling, B.},
   author={Zhang, W.},
   title={Arithmetic diagonal cycles on unitary Shimura varieties},
   journal={Compos. Math.},
   volume={156},
   date={2020},
   number={9},
   pages={1745--1824},
   issn={0010-437X},
   review={\MR{4167594}},
   doi={10.1112/s0010437x20007289},
}

\bib{yang}{article}{
   author={Yang, T.},
   title={CM number fields and modular forms},
   journal={Pure Appl. Math. Q.},
   volume={1},
   date={2005},
   number={2, Special Issue: In memory of Armand Borel},
   pages={305--340},
   issn={1558-8599},
}

\end{biblist}
\end{bibdiv}


\end{document}